\documentclass[11pt]{article}

\usepackage{verbatim,latexsym,amsfonts,amsmath,amssymb,graphicx,fancyhdr,hyperref,asymptote,enumitem}
\usepackage{appendix,latexsym,amsfonts,amsmath,amssymb,graphicx,hyperref,amsthm,soul,verbatim,authblk,enumitem}
\usepackage[framemethod=tikz]{mdframed}

\setlength{\textwidth}{6.25in} \setlength{\oddsidemargin}{0in}
\setlength{\textheight}{20 cm}

\newcommand{\EE}{\mathbb{E}}
\newcommand{\PP}{\mathbb{P}}

\newcommand{\R}{\mathbb{R}}
\newcommand{\C}{\mathbb{C}}

\newcommand{\HH}{\mathbb{H}}
\newcommand{\N}{\mathbb{N}}
\newcommand{\D}{\mathbb{D}}
\newcommand{\Z}{\mathbb{Z}}

\newcommand{\TT}{\mathbb{T}}

\newcommand{\pa}{\partial}

\newcommand{\K}{{\cal K}}

\newcommand{\F}{{\cal F}}

\def\eps{\varepsilon}
\def\til{\widetilde}
\def\ha{\widehat}
\def\sem{\setminus}
\def\lin{\overline}

\def\M{{\cal M}}
\def\conf{\stackrel{\rm Conf}{\twoheadrightarrow}}
\def\rt{\rightleftharpoons}

\DeclareMathOperator{\ccap}{cap} \DeclareMathOperator{\Cont}{Cont}
 \DeclareMathOperator{\tip}{tip}
 
 \DeclareMathOperator{\diam}{diam}
\DeclareMathOperator{\dist}{dist} 
\DeclareMathOperator{\hcap}{hcap} \DeclareMathOperator{\id}{id}
\DeclareMathOperator{\Imm}{Im } \DeclareMathOperator{\Ree}{Re }

\DeclareMathOperator{\mA}{m} 
 
 \DeclareMathOperator{\cc}{c}
 \DeclareMathOperator{\doub}{doub}
\DeclareMathOperator{\lloop}{lp} \DeclareMathOperator{\bub}{bb}

\theoremstyle{plain}
\newtheorem{Theorem}{Theorem}[section]
\newtheorem{Lemma}[Theorem]{Lemma}
\newtheorem{Corollary}[Theorem]{Corollary}
\newtheorem{Proposition}[Theorem]{Proposition}
\theoremstyle{definition}
\newtheorem{Definition}[Theorem]{Definition}
\newtheorem{Remark}[Theorem]{Remark}
\numberwithin{equation}{section}
\newcommand{\BGE}{\begin{equation}}
\newcommand{\BGEN}{\begin{equation*}}
\newcommand{\EDE}{\end{equation}}
\newcommand{\EDEN}{\end{equation*}}

\begin{document}
\title{SLE Loop Measures}
\author{Dapeng Zhan\thanks{Research partially supported by NSF grant  DMS-1056840 and Simons Foundation grant \#396973.}}
\affil{Michigan State University}
\maketitle

\begin{abstract}
We use Minkowski content (i.e., natural parametrization) of SLE to construct several types of SLE$_\kappa$ loop measures for $\kappa\in(0,8)$. First, we construct rooted SLE$_\kappa$ loop measures in the Riemann sphere $\ha \C$, which satisfy M\"obius covariance, conformal Markov property, reversibility, and space-time homogeneity, when the loop is parametrized by its $(1+\frac \kappa 8)$-dimensional Minkowski content. Second, by integrating rooted SLE$_\kappa$ loop measures, we construct the unrooted SLE$_\kappa$ loop measure in $\ha\C$, which satisfies M\"obius invariance and reversibility. Third, we extend the SLE$_\kappa$ loop measures from $\ha\C$ to subdomains of $\ha\C$ and to two types of Riemann surfaces using Brownian loop measures, and obtain conformal invariance or covariance of these measures. Finally, using a similar approach, we construct SLE$_\kappa$ bubble measures in simply/multiply connected domains rooted at a boundary point. The SLE$_\kappa$ loop measures for $\kappa\in(0,4]$ give examples of Malliavin-Kontsevich-Suhov loop measures for all $\cc\le 1$. The space-time homogeneity of rooted SLE$_\kappa$ loop measures in $\ha C$ answers a question raised by Greg Lawler.
\end{abstract}

\tableofcontents

\section{Introduction}
\subsection{Overview}
The Schramm-Loewner evolution (SLE), introduced by Oded Schramm in 1999 (\cite{Sch}), is a one-parameter ($\kappa\in(0,\infty)$) family of probability measures on non-self-crossing curves, which has received a lot of attention since then.
It has been shown that, modulo time parametrization, the interface of several discrete lattice models at criticality 
have SLE$_\kappa$ with different parameters $\kappa$ as their scaling limits. The reader may refer to \cite{Law1,RS} for basic properties of SLE.

There are several versions of SLE$_\kappa$ curves in the literature. For most of them, the initial point and the terminal point of the SLE$_\kappa$ curve are different. Motivated by the Brownian loop measure  constructed in \cite{loop}, people have been considering the construction of a new version of SLE called  SLE$_\kappa$  loops, which locally looks like an ordinary SLE$_\kappa$ curve, starts and ends at the same point, and satisfies some prerequired properties.

In this paper we focus on the SLE with parameter $\kappa\in(0,8)$, which has Hausdorff dimension
 $d:=1+\frac\kappa 8\in(1,2)$  (cf.\ \cite{Bf}), and possesses natural parametrization (cf.\ \cite{LS,LZ}) that agrees with its $d$-dimensional Minkowski content (cf.\ \cite{LR}). Lawler and Sheffield introduced the natural  parametrization of SLE in \cite{LS} in order to describe the scaling limits of discrete random paths with their natural length. So far the convergence of loop-erased random walk  to SLE$_2$ with  natural parametrization has been established (cf.\ \cite{LERW-NP2}).

Besides conformal invariance or covariance, an SLE$_\kappa$ loop is expected to satisfy the space-time homogeneity when it is parametrized by its natural parametrization, i.e., Minkowski content. The existence of such SLE$_\kappa$ loops was conjectured by Greg Lawler.

Similar to the Brownian loop, the ``law'' of an SLE$_\kappa$ loop can not be a probability measure or a finite measure. Instead, it should be a $\sigma$-finite infinite measure. We will call it an SLE$_\kappa$ loop measure to emphasize this fact.

In \cite{Wer-loop} Werner used the Brownian loop measure to construct an essentially unique  measure on the space of simple loops in any Riemann surface, which satisfies conformal invariance and the restriction property, and has a close relation with SLE$_{8/3}$.

Inspired by Malliavin's work \cite{Malliavin} and SLE theory, Kontsevich and  Suhov conjectured in \cite{KS} that for every  $\cc\le 1$, there exists a unique locally conformally covariant measure on simple loops in a Riemann surface with values in a certain determinant bundle. Furthermore, they proposed a reduction of this problem, to construct a scalar measure on simple loops in $\C$ surrounding the origin, satisfying a restriction covariance property. The parameter $\cc$ in their conjecture is the central charge from conformal field theory (CFT). It is related to the parameter $\kappa$ for SLE by the formula:
\BGE \cc=\frac{(6-\kappa)(3\kappa-8)}{2\kappa}.\label{cc}\EDE
For $\cc=0$ (i.e., $\kappa=8/3$), their measure is Werner's measure. For other $\cc\le 1$, their measure should correspond to the SLE$_\kappa$ loop measure for some $\kappa\le 4$.

A loop version of SLE called conformal loop ensemble (CLE$_\kappa$) was constructed for $\kappa\in(8/3,8)$ by Sheffield and Werner (cf.\ \cite{CLE})
in order to describe the scaling limit of a full collection of interfaces of critical lattice models. A CLE is a random collection of non-crossing loops in a simply connected domain. Every loop in a CLE$_\kappa$ looks locally like an SLE$_\kappa$ curve.  CLE is different from the SLE loop here because the latter object is a single loop.

Kassel and Kenyon constructed in \cite{KK} natural probability measures on cycle-rooted spanning trees (CRSTs). A CRST on a graph $G$ is a connected subgraph, which contains a unique cycle called unicycle. They proved that, if $G$ approximates a conformal annulus $\Sigma$, as the mesh size tends to $0$, the law of the unicycle of a uniform CRST on $G$, conditional on the event that the unicycle  separates the two boundary components of $\Sigma$,  converges weakly to a probability measure on simple loops in $\Sigma$ separating the two boundary components of $\Sigma$. They proposed a question whether this limit measure can be constructed  via a stochastic differential equation, like a variant of SLE$_2$ defined on Riemann surfaces. The limit  measure was later studied in \cite{Dubedat-loop} using a different approach, and it was explained there that this gives an example of a Malliavin-Kontsevich-Suhov loop measure for $\cc=-2$, i.e., $\kappa=2$.

Kemppainen and Werner defined (\cite{KW}) unrooted SLE$_\kappa$ loop measure in $\ha\C$ for $\kappa\in(8/3,4]$ as the intensity measure of a nested whole-plane CLE$_\kappa$, and proved that this measure satisfies M\"obius invariance and is the only invariant measure under various Markov kernels defined using CLE. 
They used the loop measure to prove the M\"obius invariance of nested CLE on $\ha\C$. They also defined a rooted SLE$_\kappa$ loop measure as a suitable scaling limit of their unrooted loop measure restricted to the event that the curve passes through a small disc centered at a marked point, and claimed that the limit converges\footnote{Werner told the author privately that they were able to prove that the rooted loop measure is well defined and satisfies the conformal Markov property (CMP) as described in the current paper (Theorem \ref{Thm-loop-measure} (ii)). Given this fact, using the uniqueness statement  (Theorem \ref{Thm-loop-measure} (vii)), we see that the loop measures constructed in the current paper for $\kappa\in(8/3,4]$ agree with Kemppainen-Werner's measures.}.

Another natural object is the SLE$_\kappa$ bubble measure, which is similar to the Brownian bubble measure constructed in \cite{LSW-8/3}. In the same paper, an SLE$_{8/3}$ bubble measure was constructed. Later in \cite{CLE}, SLE$_\kappa$ bubble measures for $\kappa\in(8/3,4]$ were constructed by conditioning a CLE loop to touch a boundary point.

Field and Lawler have also been working on the construction of SLE loops (\cite{Talk}). They have constructed SLE loops rooted at an interior point in the whole plane and in simply connected domains, and are able to verify that the measures are conformally covariant. Benoist and Dub\'edat (\cite{Benoist-Dubedat}) have been working on the construction of SLE loops using flow lines of Gaussian free field, a natural object from Imaginary Geometry (\cite{MS1,MS4}).

\subsection{Main results}
In this paper, we construct several types of SLE$_\kappa$ loop measures for all $\kappa\in(0,8)$.
 Below is a rough version of the theorem about rooted SLE$_\kappa$ loop measures in $\ha\C$ (for complete and rigorous statements, see Theorem \ref{Thm-loop-measure} for details).

\begin{Theorem}
	Let $\kappa\in(0,8)$ and $d=1+\frac{\kappa}{8}$. There is a $\sigma$-finite measure $\mu^1_0$ on the space of (oriented) nondegenerate loops rooted at $0$ such that, if $\gamma$ follows the ``law'' of $\mu^1_0$, then the following hold.
	\begin{enumerate} [label=(\roman*)]
		\item ({\bf Conformal Markov property}) For any stopping time $\tau$ that does not happen at the initial time, conditional on the part of $\gamma$ before $\tau$ and the event that $\tau$ happens before the loop returns to $0$, the rest part of $\gamma$ is a chordal SLE$_\kappa$ curve.
		\item ({\bf Space-time homogeneity}) We may parametrize $\gamma$ periodically with period $p$ equal to the ($d$-dimensional) Minkowski content of $\gamma$, such that $\gamma(0)=0$, and for any $a<b\le a+p$, the Minkowski content of  $\gamma([a,b])$ equals $b-a$. Suppose $\gamma$ has this parametrization. For any deterministic number $a\in\R$, if we reroot the loop at $\gamma(a)$, which means that we define a new loop ${\cal T}_a(\gamma)$ by ${\cal T}_a(\gamma)(t)=\gamma(a+t)-\gamma(a)$, then the ``law'' of ${\cal T}_a(\gamma)$ is also $\mu^1_0$.
		\item ({\bf Reversibility}) The reversal of $\gamma$ also has the ``law'' $\mu^1_0$.
		\item ({\bf M\"obius covariance}) For every M\"obius transformation $F$ that fixes $0$, we have $F(\mu^1_0)=|F'(0)|^{2-d}\mu^1_0$.
		\item ({\bf Finiteness of big loops}) For any $r>0$, (a) the $\mu^1_0$ measure of loops with diameter  $>r$ is finite; (b) the $\mu^1_0$ measure of loops with Minkowski content  $>r$ is finite.
		\item ({\bf Uniqueness})  The measure $\mu^1_0$ is determined by (i) and (v.a) up to a constant factor.
	\end{enumerate} \label{short}
\end{Theorem}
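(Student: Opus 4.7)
The plan is to build $\mu^1_0$ by integrating a two-sided whole-plane SLE$_\kappa$ measure against the SLE Green's function. For each $z\in\C\setminus\{0\}$, let $\nu_{0,z,0}$ denote the two-sided SLE$_\kappa$ loop in $\ha\C$ from $0$ through $z$ back to $0$: whole-plane SLE$_\kappa$ from $0$ aimed at $z$ (a Girsanov tilt of whole-plane SLE forcing hitting) up to the hitting time of $z$, concatenated with chordal SLE$_\kappa$ from $z$ to $0$ in the slit complement. Let $G(0,z)$ be the two-point SLE Green's function, which up to a constant equals the expected $d$-dimensional Minkowski content density at $z$ for an SLE loop through $0$. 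The proposal is
\[ \mu^1_0 \;=\; \int_{\C} G(0,z)\,\nu_{0,z,0}(\cdot)\,dA(z). \]
Heuristically this treats $z$ as a generic point on the loop weighted by the loop's Minkowski occupation measure, so that integrating $z$ out yields a measure on rooted loops.

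With this construction in hand, the conformal Markov property (i) follows from the fact that, conditional on an initial segment up to a stopping time $\tau$ on the event that $z$ has not yet been hit, the tail law under $\nu_{0,z,0}$ is two-sided SLE from $\gamma(\tau)$ to $0$ through $z$ in the slit domain; integrating $z$ out against $G$ in the unslit component cancels the Girsanov weight and leaves chordal SLE$_\kappa$ from $\gamma(\tau)$ to $0$. The M\"obius covariance (iv) follows from the transformation rule $|F'(z)|^2 G(0,Fz-F0)=|F'(0)|^{2-d}G(0,z)$ combined with conformal covariance of two-sided SLE and of chordal SLE.

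Reversibility (iii) is inherited from reversibility of chordal SLE$_\kappa$ for $\kappa\in(0,8)$ and the endpoint-symmetric nature of two-sided SLE through two points. Finiteness (v) comes from chordal SLE hitting-time estimates together with integrability of $G$ away from $0$, for (v.a), and suitable Minkowski content tail estimates for (v.b). Uniqueness (vi) follows because by (i), the restriction to loops reaching radius $r$ of any measure satisfying (i) and (v.a) is determined by iterated CMP to be chordal SLE$_\kappa$ from $0$ up to the hitting time of $\{|w|=r\}$ concatenated with its canonical CMP continuation; (v.a) guarantees finite mass, and consistency across $r>0$ fixes the measure up to one scalar factor.

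The main obstacle is space-time homogeneity (ii). The key observation is that the bi-marked measure $G(0,z)\,\nu_{0,z,0}(\cdot)\,dA(z)$ on pairs (loop rooted at $0$, second marked point $z$) is, modulo translation, symmetric under swapping the root with the second marked point: $G$ is symmetric in its two arguments, and two-sided whole-plane SLE through $\{0,z\}$ is itself symmetric in its two distinguished points as a consequence of reversibility and the whole-plane setup. Using additivity of Minkowski content under concatenation and the intrinsic nature of natural parametrization (\cite{LS,LZ,LR}), this swap symmetry rephrases as follows: if $\gamma\sim\mu^1_0$ is parametrized by Minkowski content with period $p$ and, conditional on $\gamma$, an auxiliary time $s$ is drawn from Lebesgue measure on $[0,p]$, then the translated rerooted pair $(\T_s(\gamma),s)$ has the same law as $(\gamma,s)$. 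A Fubini/disintegration argument against the Minkowski content measure then upgrades this to invariance of $\mu^1_0$ under deterministic rerooting at any fixed time $a$. The subtle point is that the Minkowski content density is exactly proportional to $G(0,z)$, which is precisely what makes swap symmetry and rerooting symmetry compatible---this compatibility is the heart of Lawler's question being answered by the natural parametrization.
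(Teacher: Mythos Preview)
Your overall construction is essentially the paper's, but your defining formula is missing the crucial unweighting by the Minkowski content of the loop. The paper's measure is
\[
\mu^1_0 \;=\; \Cont(\cdot)^{-1}\cdot\int_{\C\setminus\{0\}} \nu^\#_{0\rt w}\,G_{\C}(w)\,\mA^2(dw),\qquad G_{\C}(w)=|w|^{-2(2-d)},
\]
whereas your proposal omits the factor $\Cont(\cdot)^{-1}$. What you have written is $\ha\mu^1_0:=\Cont(\cdot)\cdot\mu^1_0$, and this measure does \emph{not} satisfy the conformal Markov property. Indeed, fix a nontrivial stopping time $\tau$ and condition on $\gamma_\tau$. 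Your argument correctly identifies that for each $w$ not yet hit, the tail law under $\nu^\#_{0\rt w}$ is two-sided radial SLE$_\kappa$ through $w$ in the slit domain. But when you integrate those over $w$ against $G_{\C}$, the key identity $Q(\gamma_\tau)G_{\gamma_\tau}(w)=R_w(\gamma_\tau)G_{\C}(w)$ together with Proposition~\ref{decomposition-Thm} yields the tail law $\Cont(\gamma^\tau)\cdot\mu^\#_{\gamma_\tau}$ times a $\gamma_\tau$-measurable factor, i.e.\ \emph{Minkowski-content-biased} chordal SLE$_\kappa$, not chordal SLE$_\kappa$. Similarly, under scaling $F(z)=az$ your $\ha\mu^1_0$ transforms with exponent $2-2d$, not $2-d$, so the M\"obius covariance (iv) fails as stated. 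The $\Cont^{-1}$ unweighting is exactly what cancels the extra content factor and restores both properties.

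A second, related gap: you treat the integral over $z\in\C$ as unproblematic, but the paper cannot proceed this way directly, because $\int_{\C}G_{\C}(w)\,\mA^2(dw)=\infty$ (the integrand behaves like $|w|^{2d-4}$, and $2d-3>-1$ fails at infinity), and one does not know a~priori that $\Cont(\gamma)\in(0,\infty)$ under the integrated measure. The paper resolves this by a localization procedure: one first restricts $w$ to a compact $K\subset\C\setminus\{0\}$ and restricts the curve to events $\Gamma_{\tau;K}$, proves the decomposition and CMP on those pieces (formulas (\ref{decomposition-whole-J})--(\ref{mu-rest-tau})), verifies the consistency $\mu_{K_2}|_{\Gamma_{K_1}}=\mu_{K_1}$ for $K_1\subset K_2$, and only then takes the limit over $K$. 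This is where the finiteness of the relevant measures and the existence and positivity of the Minkowski content are actually established; your proposal does not address this.

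Your outline for space-time homogeneity (ii) is in the right spirit and close to the paper's proof: the paper also works with $\ha\mu^1_0$, samples a uniform $\theta\in(0,1)$, sets $\beta={\cal T}_{\theta\Cont(\gamma)}(\gamma)$, and uses the reversibility/swap symmetry of $\nu^\#_{0\rt w}$ to show $\beta$ has the same law as $\gamma$; the step from ``random uniform reroot'' to ``deterministic reroot by $a$'' is then done by writing $a/\Cont(\beta)-\theta$ modulo $1$ as a new uniform variable independent of $\beta$. Your ``swap symmetry plus Fubini'' sketch gestures at this but does not carry it out. Your uniqueness sketch (vi) is also too thin: the paper's proof requires computing the Radon--Nikodym derivative of the stopped measure against $\K_{\tau_r}(\nu^\#_{0\to\infty})$ explicitly and showing the resulting measure satisfies the whole-plane SLE$_\kappa(2)$ CMP, which pins down the constant; ``iterated CMP'' alone does not determine the initial germ at $0$.
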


Here we remark that the conformal Markov property (CMP) is an essential property that characterizes SLE. The CMP of the rooted SLE$_\kappa$ loop measure justifies its name, and allows us to apply the SLE-based results and arguments to study SLE$_\kappa$ loop measures. The space-time homogeneity gives a positive answer to Lawler's conjecture.

The construction of rooted SLE$_\kappa$ loop measure uses two-sided whole-plane SLE$_\kappa$. A two-sided whole-plane SLE$_\kappa$ is a random loop in $\ha\C$ passing through two distinct marked points, which is characterized by the property that, conditional on any arc on the loop connecting the two marked points, the other arc is a chordal SLE$_\kappa$ curve. Although this is also an SLE$_\kappa$ loop, it does not satisfy the space-time homogeneity that we want.

The measure $\mu^1_0$ in Theorem \ref{short} is constructed by integrating the laws of two-sided whole-plane SLE$_\kappa$ curves with marked points being $0$ and  $z\in\C\sem \{0\}$ against the function $|z|^{2(d-2)}$, and then unweighting the measure of loop by the Minkowski content of the loop. The proof of the theorem makes use of the reversibility of two-sided whole-plane SLE$_\kappa$ curves (\cite{MS3,MS4,reversibility}) and the decomposition of chordal SLE$_\kappa$ in terms of two-sided radial SLE$_\kappa$ (\cite{Fie,decomposition}).

A corollary of this theorem (Corollary \ref{sssi}) is that if a two-sided whole-plane SLE$_\kappa$ curve $\gamma$ from $\infty$ to $\infty$ passing through $0$ is parametrized by $d$-dimensional Minkowski content with $\gamma(0)=0$, then it becomes a self-similar process of index $\frac 1d$ with stationary increments. This result was later used in \cite{Holder} to study the H\"older regularity and dimension property of SLE with natural parametrization. 

After obtaining rooted SLE$_\kappa$ loop measures, we construct the unrooted SLE$_\kappa$ loop measure $\mu^0$ in $\ha\C$ by integrating SLE$_\kappa$ loop measures rooted at different $z\in\C$ against the Lebesgue measure, and then unweighting the measure by the Minkowski content of the loop.  The unrooted SLE$_\kappa$ loop measure satisfies M\"obius invariance and reversibility.

After constructing SLE loops in $\ha\C$, we turned to the construction of SLE loops in subdomains of $\ha\C$. We follow Lawler's approach in \cite{Law-mult} about defining SLE in multiply connected domains using Brownian loop measures. At first, we tried to define rooted/unrooted SLE$_\kappa$ loop measures in a subdomain $D$ of $\ha\C$ by
$$\mu^1_{D;z}={\bf 1}_{\{\cdot\subset D\}}e^{\cc\mu^{\lloop}({\cal L}(\cdot, D^c))}\cdot \mu^1_z,\quad
\mu^0_D={\bf 1}_{\{\cdot\subset D\}}e^{\cc\mu^{\lloop}({\cal L}(\cdot,D^c))}\cdot \mu^0,$$
where $\mu^{\lloop}$ is the Brownian loop measure in $\ha\C$ defined in \cite{loop}, ${\cal L}(\gamma,D^c)$ is the family of loops in $\ha\C$ that intersect both $\gamma$ and $D^c$, and $\cc$ is the central charge given by (\ref{cc}). However, as pointed out by Laurie Field, the quantity $\mu^{\lloop}({\cal L}(\gamma,D^c))$ is not finite for any curve $\gamma$ in $D$, and the correct alternative is the normalized Brownian loop measure introduced in \cite{normalize}.

The normalized Brownian loop measure introduced in \cite{normalize} is the following limit:
\BGE \Lambda^*(V_1,V_2):=\lim_{r\downarrow 0} [\mu^{\lloop}_{\{|z-z_0|>r\}}({\cal L}(V_1,V_2))-\log\log(1/r)],\label{normalized-Brownian}\EDE
where $\mu^{\lloop}_{\{|z-z_0|>r\}}$ is the Brownian loop measure in $\{|z-z_0|>r\}$, and $z_0\in\C$. It was proved in \cite{normalize} that the limit converges to a finite number if $V_1$ and $V_2$ are disjoint compact subsets of $\ha\C$; and the value does not depend on the choice of $z_0$, and satisfies M\"obius invariance. Thus, the correct way to define SLE$_\kappa$ loop measures in subdomains of $\ha\C$ is using:
$$\mu^1_{D;z}={\bf 1}_{\{\cdot\subset D\}}e^{\cc\Lambda^*(\cdot, D^c)}\cdot \mu^1_z,\quad
\mu^0_D={\bf 1}_{\{\cdot\subset D\}}e^{\cc\Lambda^*(\cdot,D^c)}\cdot \mu^0.$$

Combining the generalized restriction property of chordal SLE with the CMP of rooted SLE$_\kappa$ loop measure in $\ha\C$, we are able to prove that the rooted and unrooted SLE$_\kappa$ loop measures in the subdomains of $\C$ satisfy conformally covariance and invariant, respectively.

By definition, the SLE$_\kappa$ loop measures in subdomains of $\ha\C$ satisfy the generalized restriction property. Especially, when $\kappa=8/3$, i.e, $\cc=0$, they satisfy the strong restriction property, and so agree with Werner's measure.
When $\kappa=2$ and $D$ is a conformal annulus, if we restrict $\mu^0_D$ to the family of curves that separate the two boundary components of $D$, then we get a finite measure, which is expected to agree with Kassel-Kenyon's probability measure after normalization. For $\kappa\in(8/3,4]$, the SLE$_\kappa$ loop measures and bubble measures should agree with the Kemppainen-Werner's loop measures and Sheffield-Werner's bubble measures up to a multiplicative constant depending on $\kappa$.
Our study of SLE$_\kappa$ loop measures will provide better understanding of these known measures.
Moreover, the SLE$_\kappa$ loop measures for $\kappa\in(0,4]$ give examples of   Malliavin-Kontsevich-Suhov loop measures for all $\cc\le 1$.

Later, we extend unrooted SLE$_\kappa$ loop measures to two types of Riemann surfaces $S$ using the Brownian loop measure on $S$. A Riemann surface $S$ of the first type satisfies that, if for any two disjoints subsets $V_1,V_2$ of $S$ such that $V_1$ is compact and $V_2$ is closed, we have
\BGE \mu^{\lloop}_S({\cal L}(V_1,V_2))<\infty,\label{finite}\EDE where $\mu^{\lloop}_S$ denotes the Brownain loop measure on $S$. For a Riemann surface $S$ of the second type, the above quantity is infinite, but the normalization method in \cite{normalize} works. This means that: first, if $K$ is a nonpolar closed subset of $S$, i.e., $K$ is accessible by a Brownian motion on $S$, then $S\sem K$ is of the first type; second, for any two disjoint closed subsets $V_1,V_2$ of $S$, one of which is compact, and any $z_0\in S$, the limit
\BGE \Lambda^*_S(V_1,V_2):=\lim_{r\downarrow 0} [\mu^{\lloop}_{S\sem \lin B(z_0,r)}({\cal L}(V_1,V_2))-\log\log(1/r)]\label{normalized-Brownian-S}\EDE
converges to a finite number, which does not depend on the choice of $z_0\in S$. Here $\lin B(z_0,r)$ is a closed disc centered at $z_0$ w.r.t.\ some chart surrounding $z_0$. The limit should also not depend on the choice of the chart. The quantity $\mu^{\lloop}_{S\sem \lin B(z_0,r)}({\cal L}(V_1,V_2))$ is finite because $\lin B(z_0,r)$ is a nonpolar set. We believe that (\cite{future}) any compact Riemann surface is of the second type, and any compact Riemann surface minus a nonpolar set is of the first type.

In contrast to the SLE$_\kappa$ defined in multiply connected domains and Riemann surfaces in \cite{BF-multiply,Thesis,Law-mult}, the definition of (unrooted) SLE$_\kappa$ loop measure in a Riemann surface does not require that the surface has a boundary, and does not need a marked point to start the curve. This makes the SLE$_\kappa$ loop measure a more natural object in some sense.

At the end of the paper, we use a similar method to construct an SLE$_\kappa$ bubble measure $\mu^1_{\HH;x}$ in the upper half plane $\HH$ rooted at a boundary point $x$. We obtain a theorem for $\mu^1_{\HH;x}$, which is similar to Theorem \ref{short}, except that now the space-time homogeneity (ii) does not make sense, and the covariance exponent $2-d$ in (iv) should be replaced by $\frac 8\kappa -1$ (and $F$ maps $\HH$ onto $\HH$). Using the Brownian loop measure, we then extend the SLE$_\kappa$ bubble measures to multiply connected domains.

The paper is organized as follows. In Section \ref{Preliminaries}, we fix symbols and recall some fundamental results about SLE. In Section \ref{whole-kappa-rho}, we describe how a whole-plane SLE$_\kappa(2)$ curve is distorted by a conformal map that fixes $0$. In Section \ref{Section-C}, we construct the rooted and unrooted SLE$_\kappa$ loop measures in $\ha\C$. In Section \ref{Section-S}, we construct SLE$_\kappa$ loop measures in subdomains of $\ha\C$ and in general Riemann surfaces. In Section \ref{Section-B}, we construct SLE$_\kappa$ bubble measures. In the appendix, we extend the generalized restriction property for chordal SLE$_\kappa$ from $\kappa\in(0,4]$ to $\kappa\in(0,8)$.

\section*{Acknowledgments}
The author would like to thank Greg Lawler and Wendelin Werner for inspiring discussions,  thank Laurie Field for correcting a mistake in an earlier draft, and thank Wei Wu and Yiling Wang  for some useful comments.

The author acknowledges the support from the National Science Foundation (DMS-1056840) and from the Simons Foundation (\#396973). The author  also thanks the  Institut Mittag-Leffler and Columbia University, where part of this work was carried out during workshops held there.

\section{Preliminaries} \label{Preliminaries}
\subsection{Symbols and notation} \label{Symbols}
Throughout, we fix $\kappa\in(0,8)$. Let $d=1+\frac\kappa 8\in(1,2)$  and $\cc$ be given by (\ref{cc}). Let $\HH=\{z\in\C:\Imm z>0\}$; $\D=\{z\in\C:|z|<1\}$; $\D^*=\{z\in\C:|z|>1\}\cup\{\infty\}$; $\TT=\pa\D=\pa\D^*$.
 For $z_0\in\C$ and $r>0$, let $B(z_0;r)=\{z\in\C:|z-z_0|<r\}$. For a set $S\subset\C$ and $r>0$, let $B(S;r)=\bigcup_{z\in S} B(z;r)$. Let $e^i$ denote the map $z\mapsto e^{iz}$. We will use the functions $\sin_2=\sin(\cdot /2)$, $\cos_2=\cos(\cdot/2)$, and $\cot_2=\cot(\cdot/2)$.

We use $\mA$ and $\mA^2$ to denote the $1$-dimensional and $2$-dimensional Lebesgue measures, respectively. Given a measure $\mu$, a nonnegative measurable function $f$, and a measurable set $E$ on a measurable space $\Omega$, we use $f\cdot \mu$ to denote the measure on $\Omega$ that satisfies $(f\cdot \mu)(A)=\int_A fd\mu$ for any measurable set $A$ in $\Omega$, and use $\mu|_E$ to denote the measure ${\bf 1}_E\cdot \mu=\mu(\cdot\cap E)$. If $h:\Omega\to\Omega'$ is a measurable map, then we use $h(\mu)$ to denote the pushforward measure $\mu\circ h^{-1}$ on $\Omega'$.

The Brownian loop measure  in $\ha\C$ is a sigma-finite measure on unrooted loops in $\ha\C$, which locally look like planar Brownian motions.
We use $\mu^{\lloop}$ to denote the Brownian loop measure in $\ha\C$.  Let  ${\cal L}_D(A,B)$ (resp.\ ${\cal L}_D(A)$) denote the sets of loops in $D$ that intersect both $A$ and $B$ (resp.\ $A$). We omit the subscript $D$ when $D=\ha\C$. We need the following fact\ (\cite[Corollary 4.20]{normalize}): if $D$ is a nonpolar domain, i.e., $\pa D$ can be visited by a Brownian motion, then (\ref{finite}) holds with $S=D$ and disjoint closed subsets $V_1,V_2$ of $D$, one of which is compact.
If $D=\ha\C$, $\mu^{\lloop}({\cal L}_D(V_1,V_2))$ is not finite. Instead, we should use the normalized quantity $\Lambda^*(V_1,V_2)$ in the formula (\ref{normalized-Brownian}) as introduced in \cite{normalize}. Suppose $D_1\subset D_2$ are two nonpolar subdomains of $\ha\C$, and $K$ is a compact subset of $D_1$. Using the fact that ${\cal L}(K, D_1^c)$ is the disjoint union of ${\cal L}(K, D_2^c)$ and ${\cal L}_{D_2}(K,D_2\sem D_1)$ and the formula (\ref{normalized-Brownian}), we get the equality:
\BGE \Lambda^*(K,D_1^c)=\Lambda^*(K,D_2^c)+\mu^{\lloop}({\cal L}_{D_2}(K,D_2\sem D_1)).\label{normalized-Brownian-equality}\EDE

We will use an important notion of modern probability: kernel (cf.\ \cite{Kal}). Suppose $(U,{\cal U})$ and $(V,{\cal V})$ are two measurable spaces. A kernel from $(U,{\cal U})$ to $(V,\cal V)$ is a map $\nu:U\times {\cal V}\to[0,\infty]$ such that (i) for every $u\in U$, $\nu(u,\cdot)$ is a measure on $\cal V$, and (ii) for every $F\in\cal V$, $\nu(\cdot,F)$ is $\cal U$-measurable. The kernel is said to be finite if for every $u\in U$, $\nu(u,V)<\infty$; and is said to be $\sigma$-finite if there is a sequence $F_n\in\cal V$, $n\in\N$, with $V=\bigcup F_n$ such that for any $n\in\N$ and $u\in U$, $\nu(u,F_n)<\infty$.
Let $\mu$ be a $\sigma$-finite measure on $(U,\cal U)$.
Let $\nu$ be a $\sigma$-finite $\mu$-kernel from $(U,{\cal U})$ to $(V,\cal V)$. Then we may define a measure $\mu\otimes \nu$ on ${\cal U}\times{\cal V}$ such that
$$\mu\otimes \nu(E\times F)=\int_E \nu(u,F)d\mu(u),\quad E\in\cal U,\quad F\in \cal V.$$
Sometimes, we write $\mu\otimes \nu$ as $\mu(du)\otimes \nu(u,dv)$ when the meaning of $\mu\otimes\nu$ is clearer with the variable $u,v$ explicitly stated.

If $\nu$ is a $\sigma$-finite measure on $(V,\cal V)$, and $\mu$ is a $\sigma$-finite kernel from $(V,\cal V)$ to $(U,\cal U)$, then we use $\mu\overleftarrow{\otimes}\nu$ or $\mu(v,du)\overleftarrow{\otimes}\nu(dv)$ to denote the measure on ${\cal U}\times{\cal V}$, which is the pushforward of $\nu\otimes \mu$ under the map $(v,u)\mapsto (u,v)$.

We may describe the sampling of $(X,Y)$ according to the measure $\mu\otimes\nu$  in two steps. First, ``sample'' $X$ according to the measure $\mu$. Second, ``sample'' $Y$ according to the kernel $\nu$ and the value of $X$. After the second step, the marginal measure of $X$ is changed unless $\nu$ is $\mu$-a.s.\ a probability kernel, i.e., $\nu(u,V)=1$ for $\mu$-a.s.\ every $u\in U$. The new marginal measure of $X$ after sampling $Y$ is absolutely continuous w.r.t.\ $\mu$. If $\nu$ is finite, then the new marginal measure of $X$ is $\sigma$-finite, and its Radon-Nikodym derivative w.r.t.\ $\mu$ is $\nu(\cdot,V)$; otherwise, the new marginal measure of $X$ is not $\sigma$-finite, and the Radon-Nikodym theorem does not apply.

By a simply connected domain, we mean a domain that is conformally equivalent to $\D$.
Prime ends (cf.\ \cite{Ahl}) of simply connected domains are needed to rigorously describe the initial point and terminal point of a chordal SLE or two-sided radial SLE curve. For a simply connected domain $D$, a boundary point $z_0\in \pa D$, and a prime end $p$ of $D$, if for any sequence $(z_n)$ in $D$, $z_n\to z_0$ if and only if $z_n\to p$， then we do not distinguish $z_0$ from $p$. For example, if $D$ is a Jordan domain, then there is a one-to-one correspondence between boundary points of $D$ and prime ends of $D$. If  $\gamma$ is a simple curve that starts from a boundary point of a simply connected domain $D$, stays in $D$ otherwise, and ends at an interior point of $D$, then the tip of $\gamma$ determines a prime end of $D\sem \gamma$, while every other point of $\gamma$ does not determine a prime end of $D\sem \gamma$. Instead, each of them corresponds to two prime ends. In this paper, when we say that a curve lies in a simply connected domain $D$, it often means that the curve is contained in the conformal closure of $D$, i.e., the union of $D$ and all of its prime ends.

By $f:D\conf E$, we mean that $f$ maps a domain $D$ conformally onto a domain $E$. If, $f$ also maps interior points or prime ends $z_1,\dots,z_n$ of $D$ to interior points or prime ends $w_1,\dots,w_n$ of $E$, then we write $f:(D;z_1,\dots,z_n)\conf (E;w_1,\dots,w_n)$.

For a simply connected domain $D$ with two distinct prime ends $a$ and $b$, and $z_0\in D$, we use $\mu^\#_{D;a\to b}$ and $\nu^\#_{D;a\to z_0\to b}$ to denote the laws of a chordal SLE$_\kappa$ curve in $D$ from $a$ to $b$ and a two-sided radial SLE$_\kappa$ curve in $D$ from $a$ to $b$ through $z_0$, respectively, modulo a time change.  For $z_0\ne w_0$, we use $\nu^\#_{z_0\to w_0}$ and $\nu^\#_{z_0\rt w_0}$ to denote the laws of a whole-plane SLE$_\kappa(2)$ curve in $\ha\C$ from $z_0$ to $w_0$ and a two-sided whole-plane SLE$_\kappa$ curve in $\ha\C$ from $z_0$ to $z_0$ passing through $w_0$, respectively, modulo a time change. The superscript $\#$ is used to emphasize that the measure is a probability measure.

We use $G_{D;a\to b}$ to denote the Green's function for the chordal SLE$_\kappa$: $\mu^\#_{D;a\to b}$.  We have a close-form formula for $G_{\HH;0\to\infty}$ (cf.\ \cite{LR}):
\BGE G_{\HH;0\to\infty}(z)=\ha c |z|^{1-\frac8\kappa} (\Imm z)^{\frac\kappa8+\frac8\kappa-2},\quad z\in\HH,\label{Green-H}\EDE
where  $\ha c>0$ is a constant depending only on $\kappa$. For general $(D;a,b)$, we may recover $G_{D;a\to b}$ using (\ref{Green-H}) and the conformal covariance property:
\BGE G_{D;a\to b}(z)=|g'(z)|^{2-d} G_{E;c\to d}(g(z)),\quad \mbox{if }g:(D;a,b)\conf(E;c,d).\label{Green}\EDE

A stopping time $\tau$ for a curve is called nontrivial if it does not happen at the initial time. This is an assumption used in Theorem \ref{short} (i).
For a curve $\gamma$ and a (stopping) time $\tau$, we use $\K_\tau(\gamma)$ to denote the part of $\gamma$ from its initial time till the time $\tau$.

For two curves $\beta$ and $\gamma$ such that the terminal point of $\beta$ agrees with the initial point $\gamma$, we use $\beta\oplus\gamma$ to denote the concatenation of $\beta$ and $\gamma$ (modulo a time change). For a measure $\mu$ and a kernel $\nu$ on the space of curves, if $\mu\otimes \nu$ is supported by the pairs $(\beta,\gamma)$ such that $\beta\oplus\gamma$ is well defined, we then use $\mu\oplus\nu$ to denote the pushforward measure of $\mu\otimes \nu$ under the concatenation map $(\beta,\gamma)\mapsto \beta\oplus\gamma$.

For a simply connected domain $D$ with two distinct prime ends $a$ and $b$, let $\Gamma(D;a, b)$ denote the family of curves $\gamma$ in $\lin D$ (modulo a time change) started from $a$ such that $\gamma$ does not intersect a neighborhood of $b$ in $D$, and the unique connected component of $D\sem \gamma$ that has $b$ as its prime end, denoted by $D(\gamma;b)$, has a prime end  determined by the tip of $\gamma$, denoted by $\gamma_{\tip}$. For $\gamma\in \Gamma(D;a, b)$, the chordal SLE$_\kappa$ measure $\mu^\#_{D(\gamma;b);\gamma_{\tip}\to b}$ is well defined. Moreover, $\gamma\mapsto \mu^\#_{D(\gamma;b);\gamma_{\tip}\to b}$ is a kernel from $\Gamma(D;a,b)$ to the space of curves.

For $z\in\ha\C$, let $\Gamma(\ha\C;z)$ denote the set of curves $\gamma$ in $\ha\C$ (modulo a time change)   from $z$ to another point $\gamma_{\tip}\in\ha\C$, such that there is a unique connected component of $\ha\C\sem \gamma$ whose boundary contains $z$ and has two prime ends determined by  $z$ and $\gamma_{\tip}$, respectively. Let $\ha\C(\gamma;z)$ denote this connected component.
For $z\ne w\in\ha\C $, let $\Gamma(\ha\C;z;w)$ denote the set of $\gamma\in\Gamma(\ha\C;z)$ such that $w\in \ha\C(\gamma;z)$.
For $\gamma\in \Gamma(\ha\C;z;w)$,  the two-sided radial SLE$_\kappa$ measure $\nu^\#_{\ha\C(\gamma;z);\gamma_{\tip}\to w\to z}$ is well defined, and the map from $\gamma\in \Gamma(\ha\C;z;w)$ to this measure is a kernel.

\subsection{SLE processes and their conformal Markov properties} \label{CMP-section}
In this subsection, we briefly review several types of SLE processes that are needed in this paper, and describe their conformal Markov properties (CMP).

A chordal SLE$_\kappa$ curve is a random curve running in a simply connected domain $D$ from one prime end to another prime end. It is first defined in the upper half-plane $\HH$ from $0$ to $\infty$ using chordal Loewner equation, and then extended to other domains by conformal maps. Chordal SLE is characterized by its CMP, i.e., if $\tau$ is a stopping time for a chordal SLE$_\kappa$ curve $\gamma$ in $D$ from $a$ to $b$, then conditional on the part of $\gamma$ before $\tau$ and the event that $\tau<T_b$ (the hitting time at $b$), the rest part of $\gamma$ is a chordal SLE$_\kappa$ curve from $\gamma(\tau)$ to $b$ in the remaining domain.  From (\cite{reversibility,MS3}) we know that chordal SLE$_\kappa$ satisfies reversibility, i.e., the reversal of a chordal SLE$_\kappa$ curve in $D$ from $a$ to $b$ has the same law (modulo a time change) as a chordal SLE$_\kappa$ curve in $D$ from $b$ to $a$.

A two-sided radial SLE$_\kappa$ curve is a random curve running in a simply connected domain $D$ from one prime end $a$ to another prime end $b$ through an interior point $z_0$. It is defined by first running a radial SLE$_\kappa(2)$ curve in $D$  from $a$ to $z_0$ with force point at $b$, and then continuing it with a chordal SLE$_\kappa$ curve from $z_0$ to $b$ in the remaining domain.
Two-sided radial SLE also satisfies CMP: if $\tau$ is a stopping time for the above two-sided radial SLE$_\kappa$ curve $\gamma$, then conditional on the part of $\gamma$ before $\tau$ and the event that $\tau<T_{z_0}$ (the hitting time at $z_0$), the rest part of $\gamma$ is a two-sided radial SLE$_\kappa$ curve from $\gamma(\tau)$ to $b$ though $z_0$ in the remaining domain.
Intuitively, one may view a two-sided radial SLE$_\kappa$ curve as a chordal SLE$_\kappa$ curve   conditioned to pass through an interior point.

Using the results and arguments in \cite{reversibility,MS3}, one can show that the two-sided radial SLE$_\kappa$ curve also satisfies reversibility, i.e., the reversal of a two-sided radial SLE$_\kappa$ curve in $D$ from $a$ to $b$ through $z_0$ has the same law (modulo a time change) as a two-sided radial SLE$_\kappa$ curve in $D$ from $b$ to $a$ though $z_0$. In particular, we see that the two arms of a two-sided radial SLE$_\kappa$ curve satisfies the resampling property: conditional on any one arm, the other arm is a chordal SLE$_\kappa$ curve in the remaining domain.

A two-sided whole-plane SLE$_\kappa$ curve from $a$ to $a$ through $b$ is a random loop in the Riemann sphere $\ha\C$ that starts from $a\in\ha\C$, passes through $b\in\ha\C$, and ends at $a$. The first arm of the curve  is a whole-plane SLE$_\kappa(2 )$ curve  from $a$ to $b$. Given the first arm of the curve, the second arm of the curve  is a chordal SLE$_\kappa$ curve from $b$ to $a$ in the remaining domain.
Two-sided whole-plane SLE$_\kappa$ is related to two-sided radial SLE$_\kappa$  by the following CMP: If $\tau$ is a nontrivial stopping time for a two-sided whole-plane SLE$_\kappa$ curve $\gamma$ from $a$ to $a$ through $b$, then conditional on the part of $\gamma$ before $\tau$ and the event that $\tau<T_b$, the rest part of $\gamma$ is a two-sided radial SLE$_\kappa$ curve from $\gamma(\tau)$ to $a$ though $b$ in the remaining domain. If the event is replaced by $T_b\le \tau<T_a$, where $T_a$ is the returning time at $a$, then the rest part of $\gamma$ is a chordal SLE$_\kappa$ curve.

From the resampling property of two-sided radial SLE$_\kappa$, and the reversibility of whole-plane SLE$_\kappa(2)$ and chordal SLE$_\kappa$ (\cite{MS3,MS4,reversibility}) we know that two-sided whole-plane SLE satisfies the following two types of reversibility properties. Suppose $\gamma$ is a whole-plane SLE$_\kappa$ curve from $a$ to $a$ through $b$. Then (i) the reversal of $\gamma$ has the same law (modulo a time change) as $\gamma$; and (ii) the closed curve obtained by traveling along any arm from $b$ to $a$ and continuing with the other arm from $a$ to $b$ has the same law (modulo a time-change) as a whole-plane SLE$_\kappa$ curve from $b$ to $b$ through $a$.

The CMP of chordal SLE may be stated in terms of kernels by the following formula. Let $T_b$ be the hitting time at $b$. If $\tau$ is a stopping time, then
\BGE \K_\tau(\mu^\#_{D;a\to b}|_{\{\tau<T_b\}})(d\gamma_\tau)\oplus \mu^\#_{D(\gamma_\tau;b);(\gamma_\tau)_{\tip}\to b}(d\gamma^\tau)=\mu^\#_{D;a\to b}|_{\{\tau<T_b\}},\label{CMP-chordal}\EDE
where implicitly stated in (\ref{CMP-chordal}) is that $\K_\tau(\mu^\#_{D;a\to b}|_{\{\tau<T_b\}})$ is supported by $\Gamma(D;a,b)$.

The CMP of the two-sided whole-plane SLE may  be stated in terms of kernels by the following formula. Let $T_w$ be the hitting time at $w$. If $\tau$ is a nontrivial stopping time, then
\BGE \K_\tau(\nu^\#_{z\to w}|_{\{\tau<T_{w}\}})(d\gamma_\tau)\oplus \nu^\#_{\ha\C( \gamma_\tau;z);(\gamma_\tau)_{\tip}\to w\to z}(d\gamma^\tau)=\nu^\#_{z\rt w}|_{\{\tau<T_{w}\}}.\label{CMP-whole}\EDE
where implicitly stated in (\ref{CMP-whole}) is that
$\K_\tau(\nu^\#_{z\to w}|_{\{\tau<T_{w}\}})$ is supported by $\Gamma(\ha\C;z;w)$, and the $\nu^\#_{z\to w}$ on the left may be replaced by $\nu^\#_{z\rt w}$.

\subsection{Minkowski content measure} \label{Minkowski-Section}
Now we review the Minkowski content. Since we have fixed $d=1+\frac{\kappa}8\in(1,2)$, we will omit the word ``$d$-dimensional''.
Let $S\subset\C$ be a closed set. The   Minkowski content of $S$ is defined to be
\BGE \Cont(S)=\lim_{r\downarrow 0} r^{d-2}{ \mA^2(B(S;r))},\label{Minkowski}\EDE
provided that the limit exists. Similarly, we define the   upper (resp.\ lower) Minkowski content of $S$: $\lin{\Cont}_d(S)$ (resp.\ $\underline{\Cont}_d(S)$) using (\ref{Minkowski}) with $\limsup$ (resp.\ $\liminf$) in place of $\lim$, which always exists.

Here are some basic facts. We always have $\underline{\Cont}_d(S)\le \lin{\Cont}_d(S)$, and the equality holds iff $\Cont(S)$ exists, which equals the common value. If $S\subset T$, then $\underline{\Cont}_d(S)\le \underline{\Cont}_d(T)$ and $\overline{\Cont}_d(S)\le \overline{\Cont}_d(T)$. Moreover, if $S=\bigcup_{n=0}^\infty S_n$, then
\BGE \lin \Cont(S)\le \sum_{n=0}^\infty \lin\Cont(S_n); \label{Cont1}\EDE
\BGE \underline{\Cont}_d(S)\le \underline{\Cont}_d(S_0)+\sum_{n=1}^\infty \lin\Cont(S_n).\label{Cont2}\EDE

\begin{Definition}
	Let $S,U\subset\C$. Suppose $\M$ is a measure supported by $S\cap U$ such that
	for every compact set $K\subset U$, $\Cont(K\cap S)= \M (K)<\infty$.
	Then we say that $\M$ is the  Minkowski content measure on $S$ in $U$, or $S$ possesses  Minkowski content measure in $U$. If $U=\C$, we may omit the phrase ``in $U$''.
\end{Definition}

\begin{Remark}
If $S$ possesses  Minkowski content measure in $U$, then the measure is determined by $S$ and $U$. We will use $\M_{S;U}$  to denote this measure. In the case $U=\C$, we may also omit the subscript $U$. If in addition,  $U'\subset U$, then for any closed set $F\subset\C$,  $S':=S\cap F$ also possesses Minkowski content measure in $U'$, and $\M_{S';U}=\M_{S;U}|_{S'\cap U'}$.
\end{Remark}

\begin{Definition}
	Let $\mu$ be a measure on $\ha\C$. Let $\gamma:I\to \ha\C$ be a continuous curve, where $I$ is a real interval. We say that $\gamma$ can be parametrized by $\mu$, or $\mu$ is a parametrizable measure for $\gamma$ if there is a continuous and strictly increasing function $\theta$ defined on $I$ such that for any $a\le b\in I$, $\theta(b)-\theta(a)=\mu(\gamma([a,b]))$.
\end{Definition}

\begin{Remark}
	Suppose a  parametrizable measure $\mu$ for $\gamma$ exists. Then we may reparametrize $\gamma$ such that for any $a\le b$ in the definition domain, $\mu(\gamma([a,b]))=b-a$. In this case, we say that $\gamma$ is parametrized by $\mu$. 
	Consider the equality $\mu(\gamma(A))=\mA(A)$ for such $\gamma$. By definition, it holds for any interval $A\subset I$, where $I$ is the definition interval of $\gamma$. By subadditivity and monotone convergence of measures,  the equality also holds for any finite or countable union of subintervals of $I$; and if $A$ and $B$ are disjoint intervals, then $\mu(\gamma(A)\cap \gamma(B)=0$. Thus, $\gamma$ induces an isomorphism modulo zero between the measure spaces $(I,\mA|_I)$ and $(\gamma,\mu)$, i.e., there exist $A\subset I$ and $B\subset \gamma$ such that $\mA(I\sem A)=\mu(\gamma\sem B)=0$, and $\gamma $ is an injective measurable map from $A$ onto $B$ such that $\gamma(\mA|_A)=\mu|_B$.
	
	If in addition, $\gamma$ is a non-degenerate closed curve, and we extend $\gamma$ periodically to $\R$, then for any $a,b\in\R$ with $b-a\in[0,\mu(\gamma)]$, we have $\mu(\gamma([a,b]))=b-a$. In this case, we say that $\gamma$ is periodically parametrized by $\mu$. \label{Remark-param}
\end{Remark}

\begin{Lemma}
	A chordal SLE$_\kappa$ curve $\gamma$ in $\HH$ from $0$ to $\infty$ a.s.\ possesses  Minkowski content measure, which is supported by $\HH$ and parametrizable for $\gamma$.
	\label{Minkowski-SLE}
\end{Lemma}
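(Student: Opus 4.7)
The idea is to invoke the natural parametrization of the chordal SLE$_\kappa$ curve $\gamma$ in $(\HH;0,\infty)$, constructed by Lawler--Sheffield \cite{LS} and extended by Lawler--Zhan \cite{LZ}, which by Lawler--Rezaei \cite{LR} coincides on every bounded initial segment with the $d$-dimensional Minkowski content parametrization, and then read off $\M_\gamma$ as the induced time-occupation measure.

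Reparametrize $\gamma$ by its natural parametrization, so that $\gamma:[0,\infty)\to\lin\HH$ is continuous with $\gamma(0)=0$, $\gamma(t)\to\infty$ as $t\to\infty$, and $\Cont(\gamma([a,b]))=b-a$ for all $0\le a\le b$ a.s. Define $\M_\gamma:=\gamma_*(\mA|_{[0,\infty)})$, i.e.\ $\M_\gamma(A)=\mA(\gamma^{-1}(A))$ for Borel $A\subset\C$. For any compact $K\subset\C$, transience of $\gamma$ gives $\gamma^{-1}(K)\subset[0,T]$ for some (random) $T<\infty$ with $\gamma([T,\infty))\cap K=\emptyset$. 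The measure-level form of the Minkowski content result in \cite{LR,LZ} asserts that $\gamma([0,T])$ possesses Minkowski content measure a.s.\ and that this measure equals $\gamma_*(\mA|_{[0,T]})$; restricting to $K$ yields $\Cont(K\cap\gamma)=\Cont(K\cap\gamma([0,T]))=\M_\gamma(K)<\infty$, so $\M_\gamma$ is the Minkowski content measure on $\gamma$ in the sense of the definition above.

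For the support claim, any bounded subset $S$ of $\R$ satisfies $\mA^2(B(S;r))=O(r)$ as $r\downarrow 0$, hence $r^{d-2}\mA^2(B(S;r))=O(r^{d-1})\to 0$ since $d>1$, giving $\Cont(S)=0$. Applying this with $S=\gamma\cap\R\cap\lin B(0;n)$ together with the previous paragraph forces $\M_\gamma(\R\cap\lin B(0;n))=0$ for every $n$, so $\M_\gamma(\R)=0$ by monotone convergence. For parametrizability, take $\theta(t)=t$, which is continuous and strictly increasing since $\Cont(\gamma([a,b]))>0$ for $a<b$; the required identity $\M_\gamma(\gamma([a,b]))=b-a$ reduces to showing that the excess set $\gamma^{-1}(\gamma([a,b]))\setminus[a,b]$, i.e.\ the set of double-point times, has Lebesgue measure zero, which is inherent in the natural parametrization theory for $\kappa\in(0,8)$.

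The principal obstacle is the \emph{measure-level} input used in the second paragraph: one needs not merely the scalar convergence $r^{d-2}\mA^2(B(\gamma([0,T]);r))\to T$ as $r\downarrow 0$, but convergence of the neighborhood measures $r^{d-2}\mA^2(B(\,\cdot\,;r)\cap\gamma([0,T]))$ to $\gamma_*(\mA|_{[0,T]})$ when integrated against continuous functions supported on compacts. Localizing the $L^2$ estimates and two-point Green's function bounds of \cite{LR} to $\rho$-neighborhoods of compact subsets of $\C$ supplies this upgrade, after which the remainder of the argument is routine patching over $T\uparrow\infty$ together with a standard Borel-measurability check.
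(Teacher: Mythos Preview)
Your plan is sound in outline, but the step you flag as the ``principal obstacle'' is in fact a genuine gap: you invoke a measure-level version of the Lawler--Rezaei result (vague convergence of $r^{d-2}\mA^2|_{B(\gamma([0,T]);r)}$ to the occupation measure) that is not stated in \cite{LR}, and then defer its proof to ``localizing the $L^2$ estimates.'' That localization is plausible but nontrivial, and you do not carry it out; as written, the argument rests on an unproved strengthening of the cited literature. Your parametrizability step has a similar issue: showing $\M_\gamma(\gamma([a,b]))=b-a$ via $\gamma_*(\mA)$ requires the set of double-point times to be Lebesgue-null, which you wave at but do not establish.

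The paper's proof avoids both problems by staying entirely at the scalar level. It takes only the interval identity $\Cont(\gamma([t_1,t_2]))=\theta_{t_2}-\theta_{t_1}$ from \cite{LR} (this \emph{is} stated there) and then proves directly that $\Cont(\gamma(J))=(d\theta)(J)$ for every compact $J\subset[0,\infty)$. The argument is elementary: for finite unions of closed intervals the upper bound follows from subadditivity of $\lin\Cont$; for a general compact $J$ one approximates from above by such unions to get $\lin\Cont(\gamma(J))\le(d\theta)(J)$, and for the lower bound one writes a containing interval $[0,R]$ as $J$ together with countably many open complementary intervals, applies the inequality $\Cont(\gamma([0,R]))\le\underline\Cont(\gamma(J))+\sum\Cont(\gamma([a_n,b_n]))$, and subtracts. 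This time-domain identity yields both the Minkowski content measure (take $J=\gamma^{-1}(K)$) and parametrizability (take $J=[a,b]$) in one stroke, without any measure-level convergence and without a separate double-point argument. The support claim is handled as you do, via $\Cont([-n,n])=0$ for $d>1$.
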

\begin{proof}
	Let $\theta_t$ be the natural parametrization for $\gamma$ (\cite{LS,LZ}).
	From \cite{LR} we know that $\theta_t$ is a.s.\ a strictly increasing continuous adapted process with $\theta_0=0$ such that  for any $0\le t_1\le t_2$, $\Cont(\gamma[t_1,t_2])=\theta_{t_2}-\theta_{t_1}$. We
	claim that $\gamma(d\theta)$ is the  ($d$-dimensional) Minkowski content measure on $\gamma$. To see this,
	we need to prove that for any compact subset $K$ of $\gamma$, $\Cont(K)=\gamma(d\theta)(K)$.   Since $\lim_{t\to\infty}\gamma(t)=\infty$ (\cite{RS}), $\gamma^{-1}(K)$ is a compact subset of $[0,\infty)$. So it suffices to prove that for any compact set $J\subset [0,\infty)$. $\Cont(\gamma(J))=d\theta (J)$. We already know that this is true for  $J=[t_1,t_2]$ for any $0\le t_1\le t_2$. Suppose $J=\bigcup_{j=1}^n [a_j,b_j]$, where  $0\le a_1<b_1<a_2<b_2<\cdots<a_n<b_n$. From (\ref{Cont1}), we get $$\lin\Cont(\gamma(J))\le \sum_{j=1}^n \Cont(\gamma[a_j,b_j])=\sum_{j=1}^n \theta_{b_j}-\theta_{a_j}=(d\theta)(J).$$
	
	Let $J$ be any compact subset of $[0,\infty)$. We may find a decreasing sequence $(J_m)$ such that each $J_m$ is of the form $\bigcup_{j=1}^n [a_j,b_j]$, and $J=\bigcap_{m=1}^\infty J_m$. From this, we see that
	$$\lin\Cont(\gamma(J))\le \lim_{m\to \infty}\lin\Cont(\gamma(J_m))\le \lim_{m\to \infty} (d\theta)(J_m)=(d\theta)(J).$$
	Let $R=\max J+1$. Then we may express $(0,R)$ as the disjoint union of $J$ and  finitely or countably many open intervals $(a_n,b_n)$. Using (\ref{Cont2}) we get $$\Cont(\gamma[0,R])\le \underline{\Cont}_d(\gamma(J))+\sum  {\Cont}_d(\gamma[a_n,b_n]).$$
	Since $\Cont(\gamma[0,R])=(d\theta)([0,R])$ and $\Cont(\gamma[a_n,b_n])=(d\theta)([a_n,b_n])=(d\theta)((a_n,b_n))$, we get $$\underline{\Cont}_d(\gamma(J)) \ge (d\theta)([0,R])-\sum (d\theta)((a_n,b_n))=(d\theta)(J).$$
	Combining this with $\lin{\Cont}_d(\gamma(J))\le (d\theta)(J)$, we get $\Cont(\gamma(J))=(d\theta)(J)$, as desired.
	
	Since $d>1$, for any $n\in\N$, $\gamma(d\theta)([-n,n])=\Cont(\gamma\cap[-n,n])\le \Cont([-n,n])=0$. So we get $\gamma(d\theta)(\R)=0$. Thus, $\gamma(d\theta)$ is supported by $\lin\HH\sem\R=\HH$. Finally, since
	$$\theta(b)-\theta(a)=\Cont(\gamma([a,b]))={\cal M}_d(\gamma([a,b]))=\gamma(d\theta)([a,b]),\quad \forall 0\le a\le b,$$
	and $\theta$ is continuous and strictly increasing, $\gamma(d\theta)$ is parametrizable for $\gamma$.
\end{proof}

\begin{Lemma}
	 Suppose that $S$ possesses  Minkowski content measure $\M_{S;U}$ in an open set $U\subset\C$. Suppose $f$ is a conformal map defined on $U$ such that $f(U)\subset\C$. Then for any compact set $K\subset U$,
	\BGE \Cont(f(K\cap S))=\int_K |f'(z)|^{d}d\M_{S;U}(z).\label{f(K)}\EDE
	From this we see that the Minkowski content measure of $f(S\cap U)$ in $f(U)$ exists, which is absolutely continuous w.r.t.\  $f(\M_{S;U})$, and the Radon-Nikodym derivative is $|f'(f^{-1}(\cdot))|^{d}$. \label{conformal-content}
\end{Lemma}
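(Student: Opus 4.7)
My plan is to exploit the fact that a conformal map locally behaves like a similarity: near any $z_0\in K$, $f$ is within multiplicative factor $(1\pm\epsilon)$ of the affine map $z\mapsto f(z_0)+f'(z_0)(z-z_0)$, and similarities rescale $d$-dimensional Minkowski content by $|f'(z_0)|^d$. The identity (\ref{f(K)}) should then follow by summing this local scaling against $\M_{S;U}$.

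Fix $\epsilon>0$. On a compact neighborhood $K^+\subset U$ of $K$, the derivative $f'$ is continuous, nonzero, and uniformly continuous, so there is $\delta>0$ such that whenever $z,w,z_0\in K^+$ share a convex $\delta$-ball inside $K^+$ we have $(1-\epsilon)|f'(z_0)|\,|z-w|\le |f(z)-f(w)|\le (1+\epsilon)|f'(z_0)|\,|z-w|$. Using a square grid of mesh $<\delta/(2\sqrt{2})$ and shrinking each square by a small $\eta>0$, one decomposes $K$ into compact pieces $F_1,\ldots,F_n$ with $\dist(F_j,F_k)>0$ for $j\neq k$ and a remainder $R=K\sem\bigcup_j F_j$ contained in an $\eta$-strip of the grid lines; a generic shift of the grid makes $\M_{S;U}(R)<\epsilon$ since $\M_{S;U}(K)<\infty$. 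Pick $z_j\in F_j$ and set $c_j=|f'(z_j)|$, $S_j=F_j\cap S$; on each $F_j$, $|f'|/c_j\in(1-\epsilon,1+\epsilon)$ and $f$ is $(1\pm\epsilon)c_j$-bilipschitz on a convex neighborhood of $F_j$.

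On each $F_j$, the bilipschitz estimate combined with the area change-of-variables formula $\mA^2(f(A))=\int_A|f'|^2\,d\mA^2$ gives, for small $r>0$,
\BGEN
(1-\epsilon)^2c_j^2\,\mA^2(B(S_j,r/((1+\epsilon)c_j)))\le \mA^2(B(f(S_j),r))\le (1+\epsilon)^2c_j^2\,\mA^2(B(S_j,r/((1-\epsilon)c_j))),
\EDEN
up to an $O(r)$-error from points of $B(f(S_j),r)$ lying outside the image of the shrunken square, which contributes $O(r^{d-1})$ after multiplication by $r^{d-2}$ and hence vanishes as $r\downarrow 0$ since $d>1$. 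Rescaling and using $\Cont(S_j)=\M_{S;U}(F_j)$ from the definition of $\M_{S;U}$ yields $\Cont(f(S_j))=(1+O(\epsilon))c_j^d\M_{S;U}(F_j)=(1+O(\epsilon))\int_{F_j}|f'|^d\,d\M_{S;U}$, with both $\underline{\Cont}_d$ and $\lin{\Cont}_d$ collapsing to the same value.

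Since $f$ is a homeomorphism on $U$, the compacts $f(F_1),\ldots,f(F_n)$ are pairwise separated, so for small $r$ the tubes $B(f(S_j),r)$ are pairwise disjoint and $\Cont(\bigcup_j f(S_j))=\sum_j \Cont(f(S_j))=(1+O(\epsilon))\int_{\bigcup_j F_j}|f'|^d\,d\M_{S;U}$. The contribution of $f(R\cap S)$ is bounded via the global bilipschitz estimate for $f$ on $K^+$ by $\lin{\Cont}_d(f(R\cap S))\le C\lin{\Cont}_d(R'\cap S)\le C\M_{S;U}(R')\le C'\epsilon$ for a closed set $R'\supset R$ whose $\M_{S;U}$-measure is close to that of $R$ (obtained by outer regularity or by a generic shift). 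Combining the two contributions via (\ref{Cont1}) and (\ref{Cont2}) applied to $f(K\cap S)=(\bigcup_j f(S_j))\cup f(R\cap S)$ shows that both $\lin{\Cont}_d(f(K\cap S))$ and $\underline{\Cont}_d(f(K\cap S))$ equal $\int_K|f'|^d\,d\M_{S;U}$ up to an $O(\epsilon)$ error; sending $\epsilon\downarrow 0$ proves (\ref{f(K)}). Finally, for compact $K'\subset f(U)$ one takes $K=f^{-1}(K')$ and rewrites $\int_K|f'|^d\,d\M_{S;U}=\int_{K'}|f'(f^{-1}(w))|^d\,df(\M_{S;U})(w)$, which identifies the Minkowski content measure of $f(S\cap U)$ in $f(U)$ and its Radon-Nikodym derivative $|f'(f^{-1}(\cdot))|^d$ with respect to $f(\M_{S;U})$. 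The main obstacle is the rigorous control of the remainder $R$ and of the near-boundary contributions inside each $B(f(S_j),r)$, which are handled respectively by the generic grid shift and by the $r^{d-1}\to 0$ decay coming from $d>1$.
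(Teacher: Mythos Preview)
Your proof is correct and follows essentially the same strategy as the paper: localize on a fine grid so that $f$ is an approximate similarity on each piece, compare $r$-tubes via the bilipschitz estimate, and reassemble using (\ref{Cont1}) and (\ref{Cont2}). The main technical difference is that the paper grids the \emph{image} side with closed squares and uses that the overlaps $K_j\cap K_k$ lie on analytic arcs (hence have zero $d$-content since $d>1$), thereby avoiding any remainder set; you instead grid the domain side, shrink the squares to force disjointness, and absorb the remainder $R$ via a Fubini-type grid shift. Both work; the paper's overlap trick is slightly cleaner (no generic-shift step), while your version makes the separation of the $f(F_j)$ explicit. Your aside about an ``$O(r^{d-1})$ boundary error'' is unnecessary: once $r$ is small enough that $B(f(S_j),r)$ stays inside the image of the convex neighborhood of $F_j$, the bilipschitz inclusion is exact.
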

\begin{proof}
	It suffices to prove (\ref{f(K)}). Let $R>0$ be such that $ B(f(K);R)\subset f(U)$.
	Fix $\delta\in(0,R/6)$ to be determined. Define the squares
	$$Q_{m,n}= [m\delta, (m+1)\delta]\times [n\delta,(n+1)\delta],\quad m,n\in\Z.$$
	Label the finite set $I=\{\iota\in\Z^2: f(S)\cap Q_\iota\ne\emptyset\}$ as $\{\iota_1,\dots,\iota_n\}$. Then $Q_{\iota_j}\subset f(U)$ for $1\le j\le n$. Let $K_j=K\cap f^{-1}(Q_{\iota_j})$, $1\le j\le n$. Then $K=\bigcup_{j=1}^n K_j$. Since $d>1$, and for any $1\le j<k\le n$, $K_j\cap K_k$ is either empty or contained in a straight line,  we have $\M_S(K_j\cap K_K)=\Cont(K_j\cap K_k)=0$. Thus, $\M_S(K)=\sum_{j=1}^n\M_S(K_j)$. Fix $\eps>0$. We may choose $\delta$ small enough such that with $L_j:= B(K_j;\delta)$, $1\le j\le n$, we have
	$$\eps+\int_K |f'(z)|^d d\M_S(z)\ge \sum_{j=1}^n \M_S(K_j)\frac{\sup_{z\in L_j}|f'(z)|^2}{\inf_{z\in L_j}|f'(z)|^{2-d}}$$
	\BGE  \ge \sum_{j=1}^n \M_S(K_j)\frac{\inf_{z\in L_j}|f'(z)|^2}{\sup_{z\in L_j}|f'(z)|^{2-d}}\ge \int_K |f'(z)|^d d\M_S(z)-\eps,\label{MSj}\EDE
	Let $r\in(0,\delta)$. By Koebe's distortion theorem, we have
	$$f(B(z;\frac{r/|f'(z)|}{(1+\frac r R)^2}))\subset B(f(z);r)\subset f(B(z;\frac{r/|f'(z)|}{(1-\frac r R)^2})).$$
	Thus, for any $1\le j\le n$,
	\BGE f(  B(K_j;\frac{r/\sup_{z\in K_j} |f'(z)|}{(1+\frac r R)^2}))\subset   B(f(K_j);r)\subset f( B(f(K_j);\frac{r/\inf_{z\in K_j} |f'(z)|}{(1-\frac r R)^2})).\label{incl}\EDE
	Using the second inclusion in (\ref{incl}), we get
	$$\mA^2( B(f(K);r))\le \sum_{j=1}^n \sup_{z\in L_j} |f'(z)|^2 \mA^2(  B(K_j;\frac{r/\inf_{z\in K_j} |f'(z)|}{(1-\frac r R)^2})).$$
	This together with $\Cont(K_j)=\M_S(K_j)$ and formula (\ref{MSj}) implies that
	\BGE \lin\Cont(f(K))\le \sum_{j=1}^n \frac{\sup_{z\in L_j} |f'(z)|^2}{\inf_{z\in L_j} |f'(z)|^{2-d}} \M_S(K_j)\le \int_K |f'(z)|^d d\M_S(z)+\eps .\label{+eps}\EDE
	Using the first inclusion in (\ref{incl}) and that $f(K_j)\subset Q_{\iota_j}$, we get
	$$\mA^2(B(f(K);r))\ge \sum_{j=1}^n  \inf_{z\in L_j} |f'(z)|^2 \mA^2(B(K_j;\frac{r/\sup_{z\in K_j} |f'(z)|}{(1+\frac r R)^2}))$$
	$$-\sum_{1\le j<k\le n} \mA^2( B(Q_{\iota_j}\cap Q_{\iota_k};r)).$$
	This together with $\Cont(K_j)=\M_S(K_j)$, formula (\ref{MSj}), and that $\Cont( Q_{\iota_j}\cap Q_{\iota_k})=0$ (as $d>1$) implies that
	\BGE \underline\Cont(f(K))\ge \sum_{j=1}^n \frac{\inf_{z\in L_j} |f'(z)|^2}{\sup_{z\in L_j} |f'(z)|^{2-d}} \M_S(K_j)\ge \int_K |f'(z)|^d d\M_S(z)-\eps .\label{-eps}\EDE
	Since (\ref{+eps}) and (\ref{-eps}) both hold for any $\eps>0$, we get (\ref{f(K)}).
\end{proof}

\begin{Remark}
From the above two lemmas, we see that,  if $\beta$ is a chordal SLE$_\kappa$ curve in a simply connected domain $D\subset\C$ from $a$ to $b$, then $\beta$ possesses  Minkowski content measure in $D$, which is parametrizable for any subarc of $\beta$ (strictly) contained in $D$. If there exists $W:(\HH;\infty)\conf (D;b)$, which extends conformally across $\R$, then  the whole $\beta$ without $b$ possesses  Minkowski content measure in $\C$, which is parametrizable for $\beta\sem\{b\}$. If $D$ is  an analytic Jordan domain, then the previous statement holds for the entire  $\beta$ including $b$. Here we use the reversibility of chordal SLE$_\kappa$ to exclude the bad behavior of $\beta$ near $b$.
\end{Remark}

\subsection{Decomposition of chordal SLE}
Field proved in \cite{Fie} that, for $\kappa\in(0,4]$,  if one integrates the laws of two-sided radial SLE$_\kappa$ curves in a suitable simply connected domain $D$ passing through different interior points (with the two ends fixed) against the Green's function for the chordal SLE$_\kappa$ curve, then one gets the law of a chordal SLE$_\kappa$ curve biased by the Minkowski content of the whole curve. This is analogous  to a simple fact of discrete random paths: if one integrates the laws of the path conditioned to pass through different fixed vertices against the probability that the path passes through each fixed vertex, one should get a measure on paths, which is absolutely continuous w.r.t.\ the law of the original discrete random path, and the Radon-Nikodym derivative is the total number of vertices on the path, which is due to the repetition of counting.

Later in \cite{decomposition}, the author extended Field's result to all $\kappa\in(0,8)$. Now
we review a proposition from \cite{decomposition}. It is expressed in terms of measures on the space of curve-point pairs.

\begin{Proposition} Let $D$ be a simply connected domain with two distinct prime ends $a$ and $b$. Then
	$$ \mu^\#_{D;a\to b}(d\gamma)\otimes {\cal M}_{\gamma;D}(dz)=\nu^\#_{D;a\to z\to b}(d\gamma)\overleftarrow{\otimes} (  G_{D;a\to b}\cdot \mA^2)(dz).$$
	\label{decomposition-Thm}
\end{Proposition}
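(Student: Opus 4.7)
The plan is to verify this identity of $\sigma$-finite measures on pairs $(\gamma,z)$ by disintegrating each side along the $z$-marginal. Before doing so I would reduce to a reference domain. If $f:(D;a,b)\conf(E;c,d)$, then Lemma \ref{conformal-content} gives $f(\M_{\gamma;D})=|f'(f^{-1}(\cdot))|^d\,\M_{f(\gamma);E}$, the covariance (\ref{Green}) gives $G_{D;a\to b}(z)=|f'(z)|^{2-d}G_{E;c\to d}(f(z))$, and the planar Jacobian contributes $|f'|^2$. These factors combine consistently on both sides, so pushing forward $(\gamma,z)\mapsto(f\circ\gamma,f(z))$ transports the whole identity, and it suffices to treat $(D;a,b)=(\HH;0,\infty)$, where the explicit Green's function (\ref{Green-H}) is available.

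To match the $z$-marginals, use Fubini together with the defining limit of Minkowski content. Exchanging expectation and limit (which can be justified by uniform one-arm estimates for chordal SLE from \cite{LR}), one obtains
\[
\int \M_{\gamma;\HH}(E)\,d\mu^\#_{\HH;0\to\infty}(\gamma)=\int_E\lim_{r\downarrow 0} r^{d-2}\PP[\dist(z,\gamma)<r]\,d\mA^2(z)=\int_E G_{\HH;0\to\infty}(z)\,d\mA^2(z),
\]
which is precisely the $z$-marginal of the RHS.

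The main step, and the obstacle, is identifying the regular conditional law of $\gamma$ given $z$ under the LHS with $\nu^\#_{\HH;0\to z\to\infty}$. The plan is to begin with the characterization of two-sided radial SLE as chordal SLE weighted by a Green's-function martingale: for any stopping time $\tau<T_z$, the measure $\K_\tau(\nu^\#_{\HH;0\to z\to\infty})$ is absolutely continuous with respect to $\K_\tau(\mu^\#_{\HH;0\to\infty})$, with Radon-Nikodym derivative $\gamma_\tau\mapsto G_{\HH(\gamma_\tau;\infty);(\gamma_\tau)_{\tip}\to\infty}(z)/G_{\HH;0\to\infty}(z)$; concatenating with chordal SLE from $z$ to $\infty$ via (\ref{CMP-chordal}) rebuilds the full $\nu^\#_{\HH;0\to z\to\infty}$. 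To lift this pointwise-in-$z$ statement to the joint claim, I would run a monotone-class argument on rectangles $A\times E$ where $A$ is an event measurable with respect to $\gamma$ up to a stopping time that keeps it at positive distance from a neighborhood of $E$; $\sigma$-finiteness then upgrades the identity to all Borel product sets. The technical heart of the argument, and where the content of \cite{decomposition,Fie} really lies, is precisely this compatibility: the pointwise Radon-Nikodym identity, initially valid for a fixed reference point $z$, must be integrated against the Minkowski-content measure $\M_{\gamma;\HH}$ on the curve, and one needs uniformity of the convergence $r^{d-2}\PP[\dist(z,\gamma)<r]\to G_{\HH;0\to\infty}(z)$ in $z$ over compact subsets of $\HH$ in order to swap the limit, the $z$-integration, and the curve integration in a single joint statement.
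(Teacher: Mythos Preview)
Your conformal reduction to $(\HH;0,\infty)$ via Lemma \ref{conformal-content} and (\ref{Green}) is exactly the paper's argument for the general case. For the special case, the paper simply cites \cite[Theorem 4.1]{decomposition} together with Lemma \ref{Minkowski-SLE} (the latter is needed to identify $\M_{\gamma;\HH}$ with the pushforward of the natural parametrization $d\theta$, which is the form in which \cite{decomposition} states the result). So your approach and the paper's coincide; you have simply expanded into a sketch of what the cited theorem contains rather than citing it outright, and you correctly flag that the genuine technical content---the uniform one-point estimates permitting the limit/integral exchange and the joint measurability needed to upgrade the pointwise Girsanov identity to a statement about the curve-point pair---resides in \cite{decomposition,Fie}.

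One small wording issue: ``concatenating with chordal SLE from $z$ to $\infty$ via (\ref{CMP-chordal})'' is not quite the right citation. The fact that the second arm of $\nu^\#_{\HH;0\to z\to\infty}$ is chordal SLE from $z$ to $\infty$ in the remaining domain is the \emph{definition} of two-sided radial SLE (see Section \ref{CMP-section}), not an instance of (\ref{CMP-chordal}); and for $\tau<T_z$ the continuation after $\tau$ is again two-sided radial, not chordal. This does not affect the validity of your plan.
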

\begin{proof}
	The statement in the special case $(D;a,b)=(\HH;0,\infty)$  follows from \cite[Theorem 4.1]{decomposition} and Lemma \ref{Minkowski-SLE}. The statement in the general case follows from that in the special case together with Lemma \ref{conformal-content} and (\ref{Green}).
\end{proof}

\begin{Remark}
	This proposition is very important for this paper. It has a richer structure than Field's result because it concerns both curve and point, which makes it more convenient for applications.
	If   $\int_D G_{D;a\to b}(z)\mA^2(dz)<\infty$ (this holds if, e.g., $D$ is a bounded analytic domain as assumed in \cite{Fie}), then the measure in the statement is finite. So the Minkowski content of the entire chordal SLE$_\kappa$ curve is a.s.\ finite. By looking at the margin of the restricted measure on the space of curves, we then recover Field's result. For a general domain $D$, we may still restrict the measure to a compact subset of $D$, and get some useful equality.
\end{Remark}

We now use this proposition to show that two-sided radial SLE$_\kappa$ curves and two-sided whole-plane SLE$_\kappa$ curves also possess Minkowski content measures.

\begin{Lemma}
	For every $\theta\in(0,2\pi)$, $\nu^\#_{\D;e^{i\theta}\to 0\to 1}$-a.s.,  $\gamma$ (including its two end points) possesses Minkowski content measure, which is supported by $\D$ and parametrizable for $\gamma$. \label{Minkowski-radial}
\end{Lemma}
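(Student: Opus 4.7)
The plan is to decompose $\gamma=\gamma_1\oplus\gamma_2$ via the construction of two-sided radial SLE, where $\gamma_1$ is the first arm (a radial SLE$_\kappa(2)$) from $e^{i\theta}$ to $0$ and, conditional on $\gamma_1$, $\gamma_2$ is a chordal SLE$_\kappa$ from $0$ to $1$ in the component $D_2$ of $\D\sem\gamma_1$ containing $1$. By the resampling property of two-sided radial SLE from Subsection \ref{CMP-section}, conditional on $\gamma_2$ the arm $\gamma_1$ is also a chordal SLE$_\kappa$ in the component $D_1$ of $\D\sem\gamma_2$ containing $e^{i\theta}$. Since a single boundary point of $\D$ is a.s.\ not hit by a chordal SLE$_\kappa$ curve for $\kappa<8$, we have a.s.\ $1\notin\gamma_1$ and $e^{i\theta}\notin\gamma_2$, so $\pa D_2$ is analytic at $1$ and $\pa D_1$ is analytic at $e^{i\theta}$.

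Applying the remark after Lemma \ref{conformal-content} to each arm (viewing $\gamma_1$ in reverse as chordal SLE from $0$ to $e^{i\theta}$), we obtain Minkowski content measures $\M_1$ on $\gamma_1\sem\{e^{i\theta}\}$ and $\M_2$ on $\gamma_2\sem\{1\}$ in $\C$, each supported by $\D$ and parametrizable for the respective arc. Set $\M:=\M_1+\M_2$. For compact $K\subset\C$, the upper bound $\lin\Cont(K\cap\gamma)\le\M(K)$ follows from (\ref{Cont1}) together with $\Cont(K\cap\gamma_i)=\M_i(K)$ (single endpoints carry zero $d$-content). For the matching lower bound one expands
\[ \mA^2(B(K\cap\gamma;r))=\mA^2(B(K\cap\gamma_1;r))+\mA^2(B(K\cap\gamma_2;r))-\mA^2(B(K\cap\gamma_1;r)\cap B(K\cap\gamma_2;r)), \]
multiplies by $r^{d-2}$ and takes $\liminf$; the first two resulting terms give $\M(K)$, and the overlap term must be shown to vanish.

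The main obstacle is controlling this overlap at the meeting point $0$ (and at additional common points of the two arms when $\kappa\in(4,8)$). Since $\gamma_1\cap\gamma_2\subset\gamma_1\cap\pa D_1$ lies in the boundary-touching locus of the chordal SLE$_\kappa$ curve $\gamma_1$ in $D_1$, it has Hausdorff dimension at most $2-\tfrac{8}{\kappa}<d$ for $\kappa\in(4,8)$ and reduces to $\{0\}$ for $\kappa\le4$; hence $\Cont(\gamma_1\cap\gamma_2)=0$. Combined with a uniform Minkowski-content estimate for each $\gamma_i$ in small balls, which can be obtained from Proposition \ref{decomposition-Thm} applied locally and the conformal covariance of the Green's function, this yields $r^{d-2}\mA^2(B(K\cap\gamma_1;r)\cap B(K\cap\gamma_2;r))\to 0$. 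Once this lower bound is in place, $\M$ is the Minkowski content measure of $\gamma$; support in $\D$ is inherited from each $\M_i$ since the single boundary endpoints $e^{i\theta},1$ contribute zero content; and parametrizability of $\gamma$ by $\M$ follows from the parametrizability of each $\M_i$ for $\gamma_i$ and continuous concatenation at the meeting point $0$, which has zero $\M_i$-mass for both $i$.
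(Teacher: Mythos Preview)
Your approach has a genuine gap at the step where you invoke the remark after Lemma \ref{conformal-content}. That remark gives the Minkowski content measure of a chordal SLE$_\kappa$ curve in $D$ \emph{in the domain $D$}; the stronger conclusion (MCM in $\C$ for $\beta\sem\{b\}$) requires that the Riemann map $W:(\HH;\infty)\conf(D;b)$ extend conformally across all of $\R$, i.e.\ that $\pa D\sem\{b\}$ be analytic. Your domains $D_1,D_2$ fail this hypothesis: $\pa D_2$ contains portions of the fractal curve $\gamma_1$, and likewise for $D_1$. Hence you only obtain $\M_2$ as an MCM \emph{in $D_2$}, meaning $\Cont(K\cap\gamma_2)=\M_2(K)$ is established only for compact $K\subset D_2$. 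This gives no control of $\gamma_2$ near $\gamma_1\cap\gamma_2$ (in particular near $0$), which is exactly where both your extension of $\M_2$ to $\C\sem\{1\}$ and your overlap estimate must operate. Your appeal to Proposition \ref{decomposition-Thm} ``applied locally'' to obtain uniform small-ball estimates does not help: the Green's function $G_{D_2;0\to1}$ blows up as one approaches $\pa D_2\supset\gamma_1$, so no uniform bound on $\EE\,\Cont(\gamma_2\cap B(p,\eps))$ is available for $p$ on the meeting locus. (A secondary issue: the step ``Hausdorff dimension $<d$ hence $\Cont(\gamma_1\cap\gamma_2)=0$'' would need the upper \emph{Minkowski} dimension, not just the Hausdorff dimension.)

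The paper's proof sidesteps the gluing problem entirely. It uses Proposition \ref{decomposition-Thm} to observe that integrating the two-sided radial laws $\nu^\#_{\HH;0\to z\to\infty}$ against $G_{\HH;0\to\infty}\,d\mA^2$ produces a measure absolutely continuous with respect to $\mu^\#_{\HH;0\to\infty}$. Since Lemma \ref{Minkowski-SLE} already gives the MCM for the \emph{whole} chordal curve, absolute continuity transfers this directly to the two-sided radial curve for a.e.\ through-point---no arm-by-arm analysis, no overlap term. A conformal map and reversibility then handle both endpoints, and a short-time diffusion argument (the law of the driving angle after any positive time is absolutely continuous with respect to Lebesgue) upgrades ``almost every $\theta$'' to ``every $\theta$''.
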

\begin{proof}
	From Proposition \ref{decomposition-Thm}, we know that if we integrate the laws $\nu^\#_{\HH;0\to z\to \infty}$ for different $z$ against the measure ${\bf 1}_K G_{\HH;0\to\infty}\cdot \mA^2$ for any compact set $K\subset\HH$, then we get a measure, which is absolutely continuous w.r.t.\ $\mu^\#_{\HH;0\to \infty}$. From Lemma \ref{Minkowski-SLE} and Fubini Theorem,  we conclude that, for (Lebesgue) almost every $z\in\HH$,   $\nu^\#_{\HH;0\to z\to \infty}$-a.s.\ $\gamma$ possesses Minkowski content measure, which is parametrizable for $\gamma$.
	
	Using Lemma \ref{conformal-content} and conformal invariance of two-sided radial SLE, we then conclude that, for almost every $\theta\in(0,2\pi)$, $\nu^\#_{\D;e^{i\theta}\to 0\to 1}$-a.s.,  $\gamma$ including the initial point $e^{i\theta}$ but excluding the terminal point $1$ possesses Minkowski content measure, which is parametrizable for $\gamma$. Using the reversibility of two-sided radial SLE$_\kappa$ curves, we find that the above statement holds for the entire $\gamma$ including its both end points. We need to replace ``for almost every $\theta\in(0,2\pi)$'' with ``for every $\theta\in(0,2\pi)$''. For this purpose, we fix $\theta_0\in(0,2\pi)$, and let $\gamma$ be a two-sided radial SLE$_\kappa$ curve in $\D$ from   $e^{i\theta_0}$ to $1$ through $0$. Recall that $\gamma$ up to $T_0$, the hitting time at $0$, is a radial SLE$_\kappa(2)$ curve in $\D$ started from $e^{i\theta_0}$ with force point at $1$.
	
	For $t<T_0$, let $g_t:(\D(\gamma([0,t]);1);0,1)\conf (\D;0,1)$, and let $u(t)=|g_t'(0)|$. Then $u$ is continuous  and strictly increasing, and maps $[0,T_0)$ onto $[0,\infty)$. Suppose $\gamma$ is parametrized such that $u(t)=t$ for $0\le t\le 1$. For $0\le t\le 1$, let $X_t\in (0,2\pi)$ be such that $e^{iX_t}=g_t(\gamma(t))$. From the CMP of two-sided radial SLE$_\kappa$ curve and the definition of radial SLE$_\kappa(2)$ curve, we know that, for any fixed $t\in(0,1]$, the $g_t$-image of the part of $\gamma$ after the time $t$ is a two-sided radial SLE$_\kappa$ curve in $\D$ from $e^{iX_t}$ to $1$ through $0$; and	
	$X_t$ satisfies the SDE
	$$dX_t=\sqrt{\kappa	}dB_t+2\cot_2(X_t)dt, \quad 0\le t\le 1,$$
	for some Brownian motion $B_t$, with initial value $X_0=\theta_0$. After rescaling, $(X_t)$ may be transformed into a radial Bessel process of dimension $1+\frac 8\kappa$. From \cite[Appendix B]{tip}, we know that for any $t\in(0,1]$, the law of $X_t$ is absolutely continuous w.r.t.\ ${\bf 1}_{(0,2\pi)}\cdot \mA$.
	
	Fix a $t_0\in(0,1)$.
	Let $t_1$ be the last time after $t_0$ that $\gamma$ visits $\gamma([0,t_0])$. Then the part of $\gamma$ strictly after $t_1$ stays in a domain on which $g_{t_0}$ is conformal. Here we note that $g_{t_0}$ extends conformally across $\TT\sem \gamma([0,t_0])$.
	 From Lemma \ref{conformal-content} and the above two paragraphs, we can conclude that almost surely the part of $\gamma$ from $t_1^+$ (not including $t_1$) up to and including the terminal point $1$ possesses Minkowski content measure (in $\C$), which is parametrizable for this part of $\gamma$. Since we may choose $t_0,t_1$ arbitrarily small, the above statement holds with $0^+$ in place of $t_1^+$. Using the reversibility, we then conclude that the statement holds for the entire $\gamma$ including its both end points. Finally, to see that the Minkowski content measure is supported by $\D$, we use the fact that $\Cont(\pa\D)=0$ because $d>1$.
\end{proof}

\begin{Remark}
From Lemmas \ref{conformal-content} and \ref{Minkowski-radial}, we see that, if $\beta$ is a two-sided radial SLE$_\kappa$ curve in a simply connected domain $D\subset\C$ from $a$ to $b$ through some $z\in D$, then $\beta$ possesses  Minkowski content measure in $D$, which is parametrizable for any subarc of $\beta$ (strictly) contained in $D$. If a conformal map from $\HH$ onto $D$ that takes $\infty$ to $b$ extends analytically across $\R$, then $\beta$ without $b$ possesses  Minkowski content measure (in $\C$), which is parametrizable for $\beta\sem\{b\}$. If $D$ is bounded by an analytic Jordan domain, then the previous statement holds for the entire curve $\beta$ including both $a$ and $b$.
\end{Remark}

\begin{Lemma}
	Let $z_1\ne z_2\in\C$. Let $\gamma$ be a two-sided whole-plane SLE$_\kappa$ curve from $z_1$ to $z_1$ through $z_2$. Then $\gamma$ almost surely possesses   Minkowski content measure, which is parametrizable for (the entire) $\gamma$. In particular, $\Cont(\gamma)$ almost surely exists and lies in $(0,\infty)$. \label{Mink-whole}
\end{Lemma}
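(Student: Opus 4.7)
I would decompose $\gamma$ using the CMP, apply Lemma~\ref{Minkowski-radial} to each two-sided radial SLE piece that arises, and then assemble the pieces via a parametrization argument modelled on the proof of Lemma~\ref{Minkowski-SLE}. Let $T_{z_2}$ denote the hitting time of $\gamma$ at $z_2$ and $T_{z_1}$ its returning time at $z_1$. The delicate step will be controlling the content of $\gamma$ near the root $z_1$, since $z_1$ always lies on the boundary of every domain produced by the CMP.

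For any nontrivial stopping time $\tau<T_{z_2}$, the CMP of two-sided whole-plane SLE$_\kappa$ expresses the part of $\gamma$ after $\tau$, conditional on $\K_\tau(\gamma)$, as a two-sided radial SLE$_\kappa$ in the simply connected domain $D_\tau:=\ha\C(\K_\tau(\gamma);z_1)$ from $\gamma(\tau)$ to $z_1$ through $z_2$. By the remark following Lemma~\ref{Minkowski-radial}, this two-sided radial SLE possesses Minkowski content measure in $D_\tau$, parametrizable for every subarc strictly contained in $D_\tau$. Letting $\tau_n$ be the first exit time of $\gamma$ from $B(z_1;1/n)$, the increasing sequence $D_{\tau_n}$ exhausts $\ha\C\setminus\{z_1\}$ (up to a polar set). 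Since the Minkowski content measure of a given set in a given open set is unique, the measures on $D_{\tau_n}$ are consistent under restriction and glue into a Borel measure $\M$ on $\gamma\setminus\{z_1\}$ satisfying $\Cont(K\cap\gamma)=\M(K)$ for every compact $K\subset\C\setminus\{z_1\}$. On $(0,T_{z_1})$ I define $\theta$ by $\theta_{t_2}-\theta_{t_1}=\Cont(\gamma[t_1,t_2])$, which is continuous and strictly increasing because each compact subinterval lies inside some $D_{\tau_n}$, on which parametrizability is provided by Lemma~\ref{Minkowski-radial}. The double inequality argument from the proof of Lemma~\ref{Minkowski-SLE}, together with (\ref{Cont1}) and (\ref{Cont2}), then yields $\Cont(\gamma(J))=(d\theta)(J)$ for every compact $J\subset(0,T_{z_1})$. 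The reversibility property~(i) of two-sided whole-plane SLE$_\kappa$ shows that the same construction works when approaching $z_1$ from the returning side.

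The main obstacle is to extend $\theta$ continuously to $[0,T_{z_1}]$ with finite limits, equivalently to verify that $\lin\Cont(\gamma\cap B(z_1;r))\to 0$ as $r\downarrow 0$ together with $\lin\Cont(\gamma)<\infty$ a.s. I would achieve this by bounding $\EE[\lin\Cont(\gamma\cap B(z_1;r))]$: applying the CMP at the first exit time of $\gamma$ from $B(z_1;r)$ (and reversibility for the returning segment) realizes the part of $\gamma$ close to $z_1$ as the image, under an explicit conformal map, of an initial segment of a whole-plane SLE$_\kappa(2)$; the conformal distortion analysis of Section~\ref{whole-kappa-rho} combined with the content transformation formula of Lemma~\ref{conformal-content} then yields decay of the required positive power of $r$. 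Once this decay is in hand, $\theta$ extends continuously to $[0,T_{z_1}]$ by $\theta_0:=\lim_{t\downarrow 0}\theta_t$ and $\theta_{T_{z_1}}:=\lim_{t\uparrow T_{z_1}}\theta_t$, and the argument of Lemma~\ref{Minkowski-SLE} applied one last time to $\theta$ on $[0,T_{z_1}]$ gives $\Cont(\gamma)=\theta_{T_{z_1}}-\theta_0\in(0,\infty)$ a.s., while $\M$ extended by $\M(\{z_1\}):=0$ is the Minkowski content measure of the entire curve $\gamma$, parametrizable by $\theta$.
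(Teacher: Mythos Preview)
Your decomposition via the CMP and Lemma~\ref{Minkowski-radial} correctly establishes that $\gamma\setminus\{z_1\}$ possesses a Minkowski content measure parametrizable away from $z_1$; this part matches the paper. The divergence is at what you correctly flag as the main obstacle, controlling the content near the root $z_1$.

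Your proposed resolution --- bounding $\EE[\lin\Cont(\gamma\cap B(z_1;r))]$ via Section~\ref{whole-kappa-rho} and Lemma~\ref{conformal-content} --- is where the gap lies. The part of $\gamma$ inside $B(z_1;r)$ before first exit \emph{is} already the initial segment of a whole-plane SLE$_\kappa(2)$, and representing it as a conformal image of another such curve does not help: you do not yet know that any whole-plane SLE$_\kappa(2)$ has finite upper Minkowski content near its starting point, which is exactly the statement at issue. The machinery of Section~\ref{whole-kappa-rho} supplies Radon--Nikodym derivatives between driving processes, not an a~priori content bound for the initial segment, and Lemma~\ref{conformal-content} only transforms a content measure you already possess.

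The paper bypasses this estimate entirely by invoking the \emph{second} reversibility property of two-sided whole-plane SLE$_\kappa$ recorded in Section~\ref{CMP-section}: the loop from $z_1$ to $z_1$ through $z_2$ can be re-rooted as a loop from $z_2$ to $z_2$ through $z_1$. Running the identical CMP argument with the roles of $z_1$ and $z_2$ swapped yields a Minkowski content measure on $\gamma\setminus\{z_2\}$, parametrizable away from $z_2$. Since $z_1\ne z_2$, the two measures agree on $\gamma\setminus\{z_1,z_2\}$ by uniqueness and together furnish a content measure and parametrization on all of $\gamma$; finiteness and positivity of $\Cont(\gamma)$ then follow from compactness and non-degeneracy. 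You used only the time-reversal symmetry (type~(i)), which keeps the root at $z_1$ and so cannot resolve the behaviour there; the root-swapping symmetry (type~(ii)) is the missing ingredient.
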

\begin{proof} Fix $r\in(0,|z_1-z_2|)$. Let $\tau_r$ be the first time that $\gamma$ reaches $\{|z-z_1|=r\}$. Let $K_{\tau_r}$ be the hull generated by the part of $\gamma$ before $\tau_r$, and let $D_{\tau_r}=\ha\C\sem K_{\tau_r}$. By CMP of two-sided whole-plane SLE, conditional on the part of $\gamma$ before $\tau_r$, the rest part of $\gamma$ is a two-sided radial SLE$_\kappa$ curve in $D_{\tau_r}$. Let $a_r$ be the last time that $\gamma$ visits $K_{\tau_r}$ before it reaches $z_2$; and let $b_r$ be the first time that $\gamma$ visits $K_{\tau_r}$ after it reaches $z_2$. By Lemmas \ref{conformal-content} and \ref{Minkowski-radial} and the fact that $\gamma$ a.s.\ does not pass through $\infty$, we see that the part of $\gamma$ strictly between $a_r$ and $b_r$ a.s.\ possesses Minkowski content measure, which is parametrizable for this part of $\gamma$. By letting $r\to 0$, we then conclude that $\gamma\sem\{z_1\}$ possesses Minkowski content measure, which is parametrizable for $\gamma\sem\{z_1\}$. By reversibility of two-sided whole-plane SLE, the above statement holds with $z_2$ in place of $z_1$. The two Minkowski content measures must agree, and so the entire $\gamma$ (including $z_1$ and $z_2$) possesses  Minkowski content measure, which is parametrizable for $\gamma$. Finally, since $\gamma$ is compact and not a single point, the total mass is finite and strictly positive.
\end{proof}

\section{Whole-plane SLE Under Conformal Distortion} \label{whole-kappa-rho}
We need a lemma, which describes how a whole-plane SLE$_\kappa(2)$ curve from $0$ to $\infty$ is modified under a conformal map $W$, which fixes $0$. To state the lemma, we need to review the definition of whole-plane SLE$_\kappa(\rho)$ processes.

We start with the definition of interior hulls in $\C$. A connected compact set $K\subset \C$ is called an interior hull if $\ha\C\sem K$ is connected, and is called non-degenerate if $\diam(K)>0$. For a non-degenerate interior hull $K$, there is a unique  $g_K$ such that $g_K:(\ha\C\sem K;\infty)\conf (\D^*;\infty)$, and $g_K'(\infty):=\lim_{z\to\infty} z/g_K(z)>0$. The value $\ccap(K):=\log(g_K'(\infty))$ is called the whole-plane capacity of $K$. By Koebe's $1/4$ theorem, we see that, for any $z_0\in K$, $\max\{|z-z_0|:z\in K\}$ lies between
$e^{\ccap(K)}$ and $4e^{\ccap(K)}$.

Next, we review the whole-plane Loewner equation.
Let $\lambda\in C((-\infty,T),\R)$ for some $T\in(-\infty,\infty]$. The whole-plane Loewner equation driven by $e^{i\lambda}$ is the ODE:
$$\pa_t g_t(z)=g_t(z)\frac{e^{i\lambda_t}+g_t(z)}{e^{i\lambda_t}-g_t(z)}, $$
with asymptotic initial value $\lim_{t\to-\infty} e^t g_t(z)=z$.
The covering whole-plane Loewner equation driven by $e^{i\lambda}$ is the ODE:
\BGE \pa_t\til g_t(z)=\cot_2(\til g_t(z)-\lambda_t).\label{covering'}\EDE
with asymptotic initial value $\lim_{t\to -\infty} \til g_t(z)-it =z$.
It is known that the solutions $g_t$ and $\til g_t$ exist uniquely for $-\infty<t<T$, and satisfy $e^i\circ \til g_t=g_t\circ e^i$ for every $t$; and there exists an increasing family of non-degenerate interior hulls $K_t$, ${-\infty<t<T}$, such that $\bigcap_t K_t=\{0\}$, and for each $t\in(-\infty,T)$, $\ccap(K_t)=t$ and $g_{K_t}=g_t$. So  $g_t:\ha\C\sem K_t\conf\D^*$. Let $\til K_t=(e^i)^{-1}(K_t)$. Then $\til g_t:\C\sem \til K_t\conf -\HH$. We call $g_t$ and $K_t$, $-\infty<t<T$, the whole-plane Loewner maps and hulls, respectively, driven by $e^{i\lambda}$; and call $\til g_t$ and $\til K_t$ the covering whole-plane Loewner maps and hulls, respectively, driven by $e^{i\lambda}$.

If for every $t\in(-\infty,T)$, $g_t^{-1}$ extends continuously to $\lin{\D^*}$, and $\gamma_t:=g_t^{-1}(e^i(\lambda_t))$, $-\infty<t<T$, is a continuous curve, which extends continuously to $[-\infty,T)$ with $\gamma_{-\infty}=0$, then we call $\gamma$ the whole-plane Loewner curve driven by $e^{i\lambda}$. If such $\gamma$ exists, then for any $t\in(-\infty,T)$, $\ha\C\sem K_t$ is the connected component of $\ha\C\sem \gamma([-\infty,t])$ that contains $\infty$.
Since $\ccap(K_t)=t$ for each $t\in(-\infty,T)$, we say that $\gamma$ is parametrized by whole-plane capacity.

Now we review the definition of whole-plane SLE$_\kappa(\rho)$ processes.
Let $\kappa>0$ and $\rho\ge \frac\kappa 2-2$. Let $(\lambda_t )_{t\in\R}$ and $(q_t )_{t\in\R}$ be two continuous real valued processes such that $X_t:=\lambda_t-q_t\in(0,2\pi)$ for all $t\in\R$. Let $(\F_t)_{t\in\R}$ be the filtration generated by $(e^{i\lambda_t};e^{iq_t})_{t\in\R}$.
We say that the $\TT\times \TT$-valued process $(e^{i\lambda_t};e^{iq_t})_{t\in\R}$ is a whole-plane SLE$_\kappa(\rho)$ driving process if for any finite $(\F_t)$-stopping time $\tau$,  $\lambda_{\tau+t}-\lambda_{\tau}$ and $q_{\tau+t}-q_{\tau}$, $t\ge 0$, satisfy the $(\F_{\tau+t})_{t\ge 0}$-adapted SDE:
\begin{align}
  d(\lambda_{\tau+t}-\lambda_{\tau})=&\sqrt\kappa dB^{\tau}_{t}+\frac\rho 2 \cot_2(X_{\tau+t})dt,\label{dlambda''} \\
  d(q_{\tau+t}-q_{\tau})=&-\cot_2(X_{\tau+t})dt, \label{dqt''}
\end{align}
on $[0,\infty)$, where $(B^{\tau}_t)_{t\ge 0}$ is an $(\F_{\tau+t})_{t\ge 0}$-Brownian motion.
Here we note that $(\lambda_t)$ and $(q_t)$ are in general not $(\F_t)$-adapted, but $(X_t)$ is $(\F_t)$-adapted.

Given a whole-plane SLE$_\kappa(\rho)$ driving process  $(e^{i\lambda_t};e^{iq_t})_{t\in\R}$, the whole-plane Loewner curve $\gamma$ driven by $e^{i\lambda}$, which exists by Girsanov's Theorem, is called a whole-plane SLE$_\kappa(\rho)$ curve from $0$ to $\infty$. Each $g_t^{-1}$ extends continuously to $\lin{\D^*}$; and the extended $g_t^{-1}$ maps $e^{i\lambda_t}$ to $\gamma(t)$, and maps $e^{iq_t}$ to $0$.

If $F$ is a M\"obius transformation, then the $F$-image of a whole-plane SLE$_\kappa(\rho)$ curve from $0$ to $\infty$ is called a whole-plane SLE$_\kappa(\rho)$ curve from $F(0)$ to $F(\infty)$. As mentioned before, each arm of a two-sided whole-plane SLE$_\kappa$ curve  is a whole-plane SLE$_\kappa(2)$ curve.

Let $\gamma(t)$, $-\infty\le t<\infty$, be a  whole-plane SLE$_\kappa(2)$ curve from $0$ to $\infty$ with driving process $(e^{i\lambda_t};e^{iq_t})$, $t\in\R$. Let $g_t$ and $K_t$ (resp.\ $\til g_t$ and $\til K_t$)  be the whole-plane Loewner maps and hulls (covering whole-plane Loewner maps and hulls), respectively, driven by $e^{i\lambda}$. Let $(\F_t)_{t\in\R}$ be the filtration generated by $(e^{i\lambda_t};e^{iq_t})$. Let $\tau$ be any $(\F_t)$-stopping time as in the definition of whole-plane SLE$_\kappa(\rho)$ process.
Then we have the $(\F_{\tau+t})_{t\ge 0}$-adapted SDE (\ref{dlambda''},\ref{dqt''}) with $\rho=2$.
To avoid many occurrences of $\tau+t$, we rewrite them as
\begin{align}
  d(\lambda_{t}-\lambda_{\tau})=&\sqrt\kappa dB^{\tau}_{t-\tau}+  \cot_2(X_{t})dt,\quad \tau\le t<\infty;\label{dlambda'} \\
  d(q_{t}-q_{\tau})=&-\cot_2(X_{t})dt, \quad \tau\le t<\infty.\label{dqt'}
\end{align}
Combining the above two equations, we get an SDE for $(X_t)$:
\BGE dX_{t}=\sqrt\kappa dB^\tau_{t-\tau}+2\cot_2(X_{t})dt,\quad \tau\le t<\infty.\label{dXtau'}\EDE

Let $U$ and $V$ be sub-domains of $\ha\C$ that contain $0$. Suppose that $W:(U;0) \conf(V;0)$. We will show that the law of  $\gamma$ stopped at certain time is absolutely continuous w.r.t.\  the law of  the $W(\gamma)$ stopped at certain time, and describe the Radon-Nikodym derivative.
We are going to use a standard argument that originated in \cite{LSW-8/3}. A similar argument involving chordal Loewner equations can be found in the proof of Proposition \ref{prop-mult}.

Let $\til U=(e^i)^{-1}(U)$ and $\til V=(e^i)^{-1}(V)$.
There exists $\til W:\til U\conf \til V $ such that $W\circ e^i=e^i\circ \til W$.
Let $\tau_U$ be the largest time such that $K_t\subset U\sem \{W^{-1}(\infty)\}$ for $-\infty<t<\tau_U$. If $\tau_U<\infty$, then either $\gamma$ exits $U\sem \{W^{-1}(\infty)\}$ at $\tau_U$, or separates some part of $U\sem \{W^{-1}(\infty)\}$ from $\infty$ at $\tau_U$.  For $-\infty<t<\tau_U$, $W(K_t)$ is an interior hull in $\C$, and we let $u(t)=\ccap(W(K_t))$. Then $u$ is continuous and strictly increasing, and maps $(-\infty,\tau_U)$ onto $(-\infty,S)$ for some $S\in(-\infty,\infty]$. Moreover, by Koebe's distortion theorem, we have
\BGE \lim_{t\to -\infty} e^{u(t)-t}=|W'(0)|.\label{u-infty'}\EDE

Let $L_s:=W(K_{u^{-1}(s)})$ and $\beta(s):=W(\gamma(u^{-1}(s)))$, $-\infty\le s<S$. Then $\beta$ is a whole-plane Loewner curve, and $L_s$ are the hulls generated by $\beta$. Let $e^{i\sigma_s}$ denote the driving function, and let $h_s$ and $\til h_s$ be the corresponding whole-plane Loewner maps and covering whole-plane Loewner maps, respectively.
For $-\infty<t<\tau_U$, define $W_t=h_{u(t)}\circ W\circ g_t^{-1}$, $\til W_t=\til h_{u(t)}\circ \til W\circ \til g_t^{-1}$, $U_t=g_t(U\sem K_t)$, $V_{u(t)}=h_{u(t)}(V\sem L_{u(t)})$, $\til U_t=(e^i)^{-1}(U_t)$, $\til V_{u(t)}=(e^i)^{-1}(V_{u(t)})$. Then $W_t:U_t\conf V_{u(t)}$, and $\til W_t:\til U_t\conf\til V_t $. From $g_t\circ e^i=e^i\circ \til g_t$, $h_s\circ e^i=e^i\circ \til h_s$, and $W\circ e^i=e^i\circ \til W$, we get $W_t\circ e^i=e^i\circ \til W_t$. Note that $U_t$ and $V_t$ are subdomains of $\D^*$ that contain neighborhoods of $\TT$ in $\D^*$, and as $z\in U_t$ tends to a point on $\TT$, $W_t(z)$ tends to $\TT$ as well. By Schwarz reflection principle, $W_t$ extends conformally across $\TT$, and maps $\TT$ onto $\TT$. Similarly, $\til W_t$ extends conformally across $\R$, and maps $\R$ onto $\R$. By the continuity of $\til g_t$ and $\til h_{u(t)}$ in $t$ and the maximal principle, we know that the extended  $\til W_t$ is continuous in $t$ (and $z$). Since $g_t(\gamma(t))=e^{i\lambda_t}$ and $h_{u(t)}(\beta(u(t)))=e^{i\sigma_{u(t)}}$, we get $e^{i\sigma_{u(t)}}=W_t(e^{i\lambda_t})$. By adding an integer multiple of $2\pi$ to $\sigma_s$, we may assume that
\BGE \sigma_{u(t)}=\til W_t(\lambda_t).
\label{W(lambda)'}\EDE
Fix $t\in(-\infty,\tau_U)$. Let $\eps\in(0,\tau_U-t)$. Then $g_t(K_{t+\eps}\sem K_t)$ is a hull in $\D^*$ with radial capacity  w.r.t.\ $\infty$ (c.f.\ \cite{Law1}) being $\eps$; and $h_{u(t)}(L_{u(t+\eps)}\sem L_{u(t)}))$ is a hull in $\D^*$ with  radial capacity w.r.t.\ $\infty$ being $u(t+\eps)-u(t)$. Since $W_t$ maps the former hull to the latter hull, and when $\eps\to 0^+$, the two hulls shrink to $e^{i\lambda_t}$ and $e^{i\sigma_{u(t)}}$, respectively, using a radial version of \cite[Lemma 2.8]{LSW1}, we obtain $u_+'(t)= |W_t'(e^{i\lambda_t})|^2=\til W_t'(\lambda_t)^2$. Using the continuity of $\til W_t$ in $t$, we get
\BGE u'(t)=\til W_t'(\lambda_t)^2.\label{u''}\EDE
Thus, $\til h_{u(t)}$ satisfies the equation
\BGE \pa_t \til h_{u(t)}(z)=\til W_t'(\lambda_t)^2 \cot_2(\til h_{u(t)}(z)-\sigma_{u(t)}).\label{pahut'}\EDE
Combining (\ref{u-infty'},\ref{u''}), we get
\BGE \exp\Big( \int_{-\infty}^t (\til W_s'(\lambda_s)^2-1)ds\Big)=|W'(0)|^{-1}e^{u(t)-t}. \label{W2-1'}\EDE

From the definition of $\til W_t$, we get the equality
\BGE \til W_t\circ \til g_t(z)=\til h_{u(t)}\circ \til W(z), \quad z\in (e^i)^{-1}(U\sem K_t).\label{circ'}\EDE
Differentiating this equality w.r.t.\ $t$ and using (\ref{covering'},\ref{pahut'}), we get
$$\pa_t\til W_t(\til g_t(z))+\til W_t'(\til g_t(z))\cot_2(\til g_t(z)-\lambda_t)=\til W_t'(\lambda_t)^2 \cot_2(\til h_{u(t)}\circ \til W(z)-\sigma_{u(t)}).$$
Combining this formula with (\ref{W(lambda)'},\ref{circ'}) and replacing $\til g_t(z)$ with $w$, we get
\BGE \pa_t\til W_t(w)=\til W_t'(\lambda_t)^2 \cot_2(\til W_t(w)-\til W_t(\lambda_t))-\til W_t'(w)\cot_2(w-\lambda_t),\quad w\in \til U_t.
\label{patWt'}\EDE
Letting $\til U_t\ni w\to \lambda_t$ in (\ref{patWt'}), we get
\BGE \pa_t\til W_t(\lambda_t)=-3\til W_t''(\lambda_t).\label{-3'}\EDE
Differentiating (\ref{patWt'}) w.r.t.\ $w$ and letting  $\til U_t\ni w\to \lambda_t$, we get
\BGE \frac{\pa_t\til W_t'(\lambda_t)}{\til W_t'(\lambda_t)}=\frac 12\Big(\frac{\til W_t''(\lambda_t)}{\til W_t'(\lambda_t)}\Big)^2-\frac 43\frac{\til W_t'''(\lambda_t)}{\til W_t'(\lambda_t)}-\frac 16 (\til W_t'(\lambda_t)^2-1).\label{1243'}\EDE

Define $p_s$ such that
\BGE p_{u(t)}=\til W_t(q_t),\quad -\infty<t<\tau_U.\label{W(q)'}\EDE
Since $g_t(0)=e^{iq_t}$, we get $e^{i p_{u(t)}}=h_{u(t)}(0)$. Since $\til W_t(z+2\pi)=\til W_t(z)+2\pi$, from $\lambda_t-q_t=X_t\in(0,2\pi)$ and (\ref{W(lambda)'},\ref{W(q)'}) we get $Y_{u(t)}:=\sigma_{u(t)}-p_{u(t)}\in(0,2\pi)$. Using
(\ref{covering'},\ref{dqt'},\ref{pahut'},\ref{circ'}) we get
\BGE d p_{u(t)}=-\til W_t'(\lambda_t)^2  \cot_2(Y_{u(t)} ) .\label{dpt'}\EDE
Differentiating (\ref{patWt'}) w.r.t.\ $w$, letting  $\til U_t\ni w\to q_t$, and using (\ref{dqt'}), we get
\BGE \frac{d\til W_t'(q_t)}{\til W_t'(q_t)}=\til W_t'(\lambda_t)^2 \cot_2'(Y_{u(t)} )dt-\cot_2'(X_t)dt.\label{paWt'qt'}\EDE

Suppose the $(\F_t)$-stopping time $\tau$ is less than $\tau_U$.
From now on till right before Lemma \ref{whole-conformal'}, the ranges of $t$ in all equations are $[\tau,\tau_U)$.
Combining (\ref{W(lambda)'},\ref{-3'},\ref{dpt'},\ref{dlambda'}), and using It\^o's formula, we see that $Y_{u(t)}=\sigma_{u(t)}-p_{u(t)}$ satisfies the SDE
\begin{align}
dY_{u(t)}=  \til W_t'(\lambda_t)\sqrt\kappa dB^\tau_{t-\tau}+\til W_t'(\lambda_t)\cot_2(X_{t})dt
+\Big(\frac\kappa 2-3\Big) \til W_t''(\lambda_t)dt
+\til W_t'(\lambda_t)^2  \cot_2(Y_{u(t)} )dt.\label{dYtau'}
\end{align}
Combining (\ref{1243'},\ref{dlambda'}) with $\rho=2$ and using It\^o's formula, we get
\begin{align}
\frac{d\til W_t'(\lambda_t)}{\til W_t'(\lambda_t)}=&\frac{\til W_t''(\lambda_t)}{\til W_t'(\lambda_t)}\sqrt\kappa dB^\tau_{t-\tau}+ \frac{\til W_t''(\lambda_t)}{\til W_t'(\lambda_t)}\cot_2(X_t)dt-\frac 16 (\til W_t'(\lambda_t)^2-1)dt \nonumber \\
&+\frac 12\Big(\frac{\til W_t''(\lambda_t)}{\til W_t'(\lambda_t)}\Big)^2dt+\Big(\frac \kappa 2-\frac 43\Big)\frac{\til W_t'''(\lambda_t)}{\til W_t'(\lambda_t)}dt . \label{dW''}
\end{align}

Let $(Sf)(z)=\frac{f'''(z)}{f'(z)}-\frac 32(\frac{f''(z)}{f'(z)})^2$ be the Schwarzian derivative of $f$. Let  $\cc$ be the central charge for SLE$_\kappa$ as defined by (\ref{cc}). From (\ref{dXtau'},\ref{paWt'qt'},\ref{dYtau'},\ref{dW''}) and It\^o's formula, we see that
\begin{align}
\frac{d \sin_2(X_t)^{-2/\kappa}}{\sin_2(X_t)^{-2/\kappa}}=&-\cot_2(X_t)\frac{dB^\tau_{t-\tau}}{\sqrt\kappa} +\frac{\kappa-6}{4\kappa}\cot_2(X_t)^2dt+\frac 14 dt;\label{sinX'} \\
\frac{d \sin_2(Y_{u(t)})^{2/\kappa}}{\sin_2(Y_{u(t)})^{2/\kappa}}=&\til W_t'(\lambda_t)\cot_2(Y_{u(t)})\frac{dB^\tau_{t-\tau}}{\sqrt\kappa}+\frac 1\kappa \til W_t'(\lambda_t)\cot_2(X_t)\cot_2(Y_{u(t)})dt-\frac14 \til W_t'(\lambda_t)^2dt \nonumber\\
&+\frac{6-\kappa}{4\kappa}\til W_t'(\lambda_t)^2\cot_2(Y_{u(t)})^2dt+\frac {\kappa-6}{2\kappa}\til W_t''(\lambda_t)\cot_2(Y_{u(t)})dt; \label{sinY'}\\
\frac{d \til W_t'(\lambda_t)^{\frac{6-\kappa}{2\kappa}}}{  \til W_t'(\lambda_t)^{\frac{6-\kappa}{2\kappa}}}=&\frac{6-\kappa}{2 } \frac{\til W_t''(\lambda_t)}{\til W_t'(\lambda_t)} \frac{dB^\tau_{t-\tau}}{\sqrt\kappa} +\frac{6-\kappa}{2\kappa} \frac{\til W_t''(\lambda_t)}{\til W_t'(\lambda_t)}\cot_2(X_t)dt\nonumber \\
& -\frac{6-\kappa}{12\kappa}(\til W_t'(\lambda_t)^2-1)dt+\frac{\cc}6 S(\til W_t)(\lambda_t)dt;\label{W'lambda'}\\
\frac{d\til W_t'(q_t)^{\frac{6-\kappa}{2\kappa}}}{\til W_t'(q_t)^{\frac{6-\kappa}{2\kappa}}}=& {\frac{6-\kappa}{4\kappa}}(- \til W_t'(\lambda_t)^2 \cot_2(Y_{u(t)} )^2 + \cot_2(X_t)^2)dt- {\frac{6-\kappa}{4\kappa}}(\til W_t'(\lambda_t)^2-1)dt. \label{W'q'}
\end{align}
Define
\BGE N_t=\til W_t'(\lambda_t)^{\frac{6-\kappa}{2\kappa}}\til W_t'(q_t)^{\frac{6-\kappa}{2\kappa}}\sin_2(Y_{u(t)})^{2/\kappa}\sin_2(X_t)^{-2/\kappa}.\label{Nt'}\EDE
Combining (\ref{sinX'},\ref{sinY'},\ref{W'lambda'},\ref{W'q'}) and using It\^o's formula, we get
\begin{align}
\frac{dN_t}{N_t}=& -\cot_2(X_t)\frac{dB^\tau_{t-\tau}}{\sqrt\kappa}+\til W_t'(\lambda_t)\cot_2(Y_{u(t)})\frac{dB^\tau_{t-\tau}}{\sqrt\kappa}
+\frac{6-\kappa}{2} \frac{\til W_t''(\lambda_t)}{\til W_t'(\lambda_t)} \frac{dB^\tau_{t-\tau}}{\sqrt\kappa}\nonumber\\
& +\frac{\kappa -24}{12\kappa} (\til W_t'(\lambda_t)^2-1)dt+\frac{\cc}6 S(\til W_t)(\lambda_t)dt,\quad \tau\le t<\tau_U.\label{dNt'}
\end{align}

We need the following proposition, which follows easily from \cite[Lemma 4.4]{whole}.

\begin{Proposition}
  There is a positive continuous function $N(r)$ defined on $(0,\infty)$ that satisfies $N(r)=O(re^{-r})$ as $r\to\infty$, such that the following is true. Let $\ha U$ and $\ha V$ be doubly connected open neighborhoods of $\TT$ in $\{z\in\C:|z|>1\}$ with the same modulus  $r$. Let $\ha W:\ha U\conf\ha V$ be such that $\ha W(\TT)=\TT$. Then
  \BGE |\log \ha W'(x)|, |S(\ha W)(x)|\le N(r)\quad  x\in\R.\label{estW-ha}\EDE \label{Prop-whole}
\end{Proposition}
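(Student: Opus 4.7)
The plan is to reduce the proposition to \cite[Lemma 4.4]{whole}, which supplies the required exponential bound, by passing to exponential coordinates. Let $\til U=(e^i)^{-1}(\ha U)$ and $\til V=(e^i)^{-1}(\ha V)$; these are $2\pi$-periodic neighborhoods of $\R$ in $-\HH$. Lifting $\ha W$ through $e^i$ yields a conformal map $\til{\ha W}:\til U\to\til V$ with $e^i\circ \til{\ha W}=\ha W\circ e^i$, which is $2\pi$-periodic (modulo $2\pi\Z$) and extends by Schwarz reflection across $\R$ so as to map $\R$ to $\R$ (modulo $2\pi\Z$). The quantities $\ha W'(x)$ and $S(\ha W)(x)$ at $x\in\R$ appearing in the statement are the ones of this lifted map on $\R$, so the proposition becomes a statement about $2\pi$-periodic conformal maps between strip-like neighborhoods of $\R$ of modulus $r$.

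In that formulation the bound is precisely the content of \cite[Lemma 4.4]{whole}, which provides a continuous positive function $N_0(r)$ with $N_0(r)=O(re^{-r})$ as $r\to\infty$ that controls $|\log\til{\ha W}'(x)|$ and $|S(\til{\ha W})(x)|$ uniformly over all admissible configurations of modulus $r$. One then sets $N(r):=\max(N_0(r),e^{-r})$ (or any continuous positive majorant) to obtain positivity and continuity while preserving the $O(re^{-r})$ decay. Thus the concrete proof would essentially consist of setting up this lift and invoking the lemma.

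The conceptual content behind the cited lemma, which would be the hard part if one had to prove it from scratch, is the following. By uniformization, $\til U$ and $\til V$ are both conformally equivalent to a standard round strip of modulus $r$, with the uniformizing maps $\phi_U,\phi_V$ chosen to preserve $\R$ setwise; then $\phi_V\circ\til{\ha W}\circ\phi_U^{-1}$ is a conformal self-map of the round strip preserving $\R$, hence a horizontal translation. So $\til{\ha W}$ is essentially determined by the auxiliary maps $\phi_U,\phi_V$, which are periodic conformal maps between strip-like neighborhoods of $\R$ and a round strip, and whose nonconstant Fourier modes in the periodic variable decay like $e^{-n\pi r}$ by standard Paley--Wiener-type estimates for holomorphic functions on strips. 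Differentiating the resulting Fourier series up to third order on $\R$ produces the $|\log\til{\ha W}'|$ and $|S(\til{\ha W})|$ bounds of order $re^{-r}$, with the extra factor $r$ arising from the normalization bookkeeping and the summation of the geometric series. Converting the global modulus hypothesis into this pointwise exponential Fourier decay is the main technical obstacle; that is what \cite[Lemma 4.4]{whole} accomplishes, so in the present proof I would invoke it rather than re-derive it.
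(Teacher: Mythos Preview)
Your proposal is correct and takes essentially the same approach as the paper: the paper states that the proposition ``follows easily from \cite[Lemma 4.4]{whole}'', and your reduction via the lift through $e^i$ together with the invocation of that lemma is exactly what is intended. Your identification that the derivatives and Schwarzian at $x\in\R$ must be read through the lifted map $\til{\ha W}$ is also the correct way to interpret the statement, consistent with how the proposition is applied immediately afterward to $\til W_t$.
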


Let $a\in\R$ be such that $\{|z|=4e^{a}\}$ separates $0$ from $U^c\cup \{W^{-1}(\infty)\}$. For $t<a$, since $\ccap(K_t)=t$, we have $K_t\subset\{|z|\le 4e^t\}\subset \{|z|< 4 e^{a}\}$. Thus, $K_t\subset U$, and the modulus of the doubly connected domain between $K_t$ and $\{|z|=4e^{a}\}$ is at least $a-t$. Since the conformal image of this doubly connected domain under $g_t$ is an open neighborhood of $\TT$ in $U_t\cap\C$, and $W_t$ maps this domain conformally onto an open neighborhood of $\TT$ in $V_t\cap\C$, using Proposition \ref{Prop-whole}, we get
\BGE |\log \til W_t'(x)|, |S(\til W_t)(x)| =O(|a-t|e^{t-a}),\quad t\in(-\infty,a),\quad  x\in\R.\label{estW'}\EDE
For $t\in(-\infty,\tau_U)$, define
\BGE M_t=N_t\exp\Big(-\frac{\kappa -24}{12\kappa}\int_{-\infty}^t (\til W_s'(\lambda_s)^2-1)ds- \frac{\cc}6\int_{-\infty}^t  S(\til W_s)(\lambda_s)ds \Big).\label{Mt'}\EDE
From (\ref{estW'}) we know that the improper integrals inside the exponential function converge.
 From (\ref{dNt'}) we see that $(M_t)$ satisfies the SDE
\BGE  \frac{dM_t}{M_t}= -\cot_2(X_t)\frac{dB^\tau_{t-\tau}}{\sqrt\kappa}+\til W_t'(\lambda_t)\cot_2(Y_{u(t)})\frac{dB^\tau_{t-\tau}}{\sqrt\kappa}+\frac{6-\kappa}{2} \frac{\til W_t''(\lambda_t)}{\til W_t'(\lambda_t)} \frac{dB^\tau_{t-\tau}}{\sqrt\kappa},\quad \tau\le t<\tau_U.\label{dMt'}\EDE

Since $X_t=\lambda_t-q_t\in(0,2\pi)$ and $Y_{u(t)}=\sigma_{u(t)}-p_{u(t)}=\til W_t(\lambda_t)-\til W_t(q_t)$, from (\ref{estW'}) we get $\sin_2(Y_{u(t)})/\sin_2(X_t)\to 1$ as $t\to -\infty$. From (\ref{Nt'},\ref{Mt'}) we see that $M_{-\infty}:=\lim_{t\to -\infty} M_t= 1$.

Let $\rho$ be a Jordan curve, whose interior contains $0$, and whose exterior contains $U^c\cup \{W^{-1}(\infty)\}$. Let $\tau_\rho$ be the hitting time at $\rho$. Let $r_1=\min\{|z|:z\in\rho\}$ and $r_2=\max\{|z|:z\in\rho\}$. Then $0<r_1\le r_2<\infty$, and $\log(r_1/4)\le \tau_\rho\le \log(r_2)$.  There is another Jordan curve $\rho'$, whose interior contains $\rho$, and has the same property as $\rho$. Let $m$ be the modulus of the domain bounded by $\rho$ and $\rho'$. Then for $t\le \tau_\rho$, the modulus of the domain bounded by $\pa K_t$ and $\rho'$ is at least $m$. From Proposition \ref{Prop-whole} we see that
\BGE |\log \til W_t'(x)|, |S(\til W_t)(x)|\le N(m),\quad t\in(-\infty,\tau_\rho],\quad  x\in\R.\label{estW''}\EDE
Combining (\ref{estW''}) and (\ref{estW'}) with $a=\log(r_1/4)$, and using $\tau_\rho-a\le \log(r_2)-\log(r_1/4)$, we see that $(M_t)$, is uniformly bounded on $[-\infty,\tau_\rho]$. By choosing the $\tau$ in (\ref{dMt'}) to be any deterministic time less than $a$,
 we see that $M_t$, $-\infty\le t\le \tau_\rho$, is a uniformly bounded martingale. Thus, $\EE[M_{\tau_\rho}]=M_{-\infty}=1$. Weighting the underlying probability measure by $M_{\tau_\rho}$, we get a new probability measure. Suppose $\tau<\tau_\rho$. By Girsanov Theorem and (\ref{dMt'}), we find that
$$\ha B^\tau_t:=B^\tau_t-\frac 1{\sqrt\kappa} \int_0^t \til W_s'(\lambda_s)\cot_2(Y_{u(s)})-\cot_2(X_s)+\frac{6-\kappa}{2} \frac{\til W_s''(\lambda_s)}{\til W_s'(\lambda_s)}ds,\quad 0\le t\le \tau_\rho-\tau,$$
is a Brownian motion under the new probability measure. We may rewrite (\ref{dYtau'}) as
\BGE dY_{u(\tau+t)}=  \til W_{\tau+t}'(\lambda_{\tau+t})\sqrt\kappa d\ha B^\tau_{t}+ 2\til W_{\tau+t}'(\lambda_{\tau+t})^2  \cot_2(Y_{u(\tau+t)} )dt,\quad 0\le t\le \tau_\rho-\tau.\label{dYutau}\EDE

Since $L_{u(\tau_\rho)}=W(K_{\tau_\rho})$ intersects $W(\rho)$, we have $u(\tau_\rho)=\ccap(L_{u(\tau_\rho)})\ge \log(\dist(0,W(\rho))/4)$. By choosing $\tau=u^{-1}(b)$ for some $b\in(-\infty,\log(\dist(0,W(\rho))/4)]$, and using (\ref{u''},\ref{dYutau}), we see that there is a Brownian motion $\til B^b_s$ such that $Y_s$ satisfies the SDE
$$ dY_{b+s}=\sqrt\kappa d\til B^b_s+2\cot_2(Y_{b+s})ds, \quad 0\le s\le u(\tau_\rho)-b.$$
Since $Y_s=\sigma_s-p_s$ and $p_s'=-\cot_2(Y_s)$, and $e^{i\sigma_s}$ is the driving function for $\beta=W(\gamma)$, we see that $(e^{i\sigma_{s}};e^{ip_{s}})$ satisfy (\ref{dlambda'},\ref{dqt'}) for $b\le s\le u(\tau_\rho)$. Since this holds for any $b\le \log(\dist(0,W(\rho))/4)$, we see that $(e^{i\sigma_{ s}};e^{ip_{ s}})_{-\infty<s\le u(\tau_\rho)}$ is the driving process for a whole-plane SLE$_\kappa(2)$ curve stopped at $u(\tau_\rho)$, which is the hitting time at $W(\rho)$. Since $\beta=W(\gamma)$ is the whole-plane Loewner curve driven by $e^{i\sigma}$, we get the following lemma.

\begin{Lemma}
	 Let $\rho$ be a Jordan curve, whose interior contains $0$, and whose exterior contains $U^c\cup \{W^{-1}(\infty)\}$. Let $\tau_\rho$ and $\tau_{W(\rho)}$ be the hitting time at $\rho$ and $W(\rho)$, respectively. Then
	$$\K_{\tau_{W(\rho)}}(\nu^\#_{0\to\infty})= W(M_{\tau_\rho}\cdot \K_{\tau_{\rho}}(\nu^\#_{0\to\infty})),$$
	where $(M_t)$ is defined by (\ref{Nt'},\ref{Mt'}).
	Here we note that $M_{\tau_\rho}(\gamma)$ is determined by the driving process $(e^{i\lambda_t};e^{iq_t})_{t\le\tau_\rho}$, which in turn is determined by $\K_{\tau_\rho}(\gamma)$. \label{whole-conformal'}
\end{Lemma}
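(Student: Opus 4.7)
The plan is to assemble the ingredients that have already been prepared in the preceding derivation (the explicit form of $N_t$ and $M_t$, the SDEs (\ref{dMt'}), (\ref{dYtau'}), the uniform bound on $M_t$ up to $\tau_\rho$, and the initial value $M_{-\infty}=1$) into a Girsanov change-of-measure argument that identifies the pushforward $W(M_{\tau_\rho}\cdot \K_{\tau_\rho}(\nu^\#_{0\to\infty}))$ as the law of a whole-plane SLE$_\kappa(2)$ stopped at the hitting time of $W(\rho)$.

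First, I would recall that (\ref{dMt'}) exhibits $(M_t)_{-\infty<t\le\tau_\rho}$ as a positive local martingale under $\nu^\#_{0\to\infty}$, and that the bounds (\ref{estW'}), (\ref{estW''}) together with the $O(r e^{-r})$ decay in Proposition \ref{Prop-whole} make $(M_t)$ uniformly bounded on $(-\infty,\tau_\rho]$, hence a genuine martingale. Combined with $M_{-\infty}=1$ (established by letting $t\to-\infty$ in (\ref{Nt'}), using that $\sin_2(Y_{u(t)})/\sin_2(X_t)\to 1$ and $\til W_t'\to 1$ at $\lambda_t,q_t$), this gives $\EE[M_{\tau_\rho}]=1$. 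So $M_{\tau_\rho}\cdot \K_{\tau_\rho}(\nu^\#_{0\to\infty})$ is a probability measure on curves stopped at hitting $\rho$.

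Next, I would apply Girsanov's theorem under this weighted measure. The key observation is that the stochastic integrand in (\ref{dMt'}) is precisely the drift correction needed to turn the SDE (\ref{dYtau'}) for $Y_{u(t)}$ into the radial Bessel-type SDE
\[ dY_{u(t)} = \til W_t'(\lambda_t)\sqrt\kappa\, d\ha B_{t} + 2\til W_t'(\lambda_t)^2\cot_2(Y_{u(t)})\,dt, \]
where $\ha B$ is a Brownian motion under the new measure. A time change by $s=u(t)$, using (\ref{u''}), reduces this to $dY_{b+s}=\sqrt\kappa\, d\til B_s + 2\cot_2(Y_{b+s})\,ds$. Since $\beta=W(\gamma)$ is the whole-plane Loewner curve driven by $e^{i\sigma_s}$ with $p_s'=-\cot_2(Y_s)$ and $Y_s=\sigma_s-p_s$, this is exactly the defining SDE (\ref{dlambda'})-(\ref{dqt'}) for a whole-plane SLE$_\kappa(2)$ driving process, for all parameter ranges covered by choosing the starting time $\tau=u^{-1}(b)$ with $b$ arbitrarily small.

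Finally, since this identification holds for arbitrarily negative $b$ below $\log(\dist(0,W(\rho))/4)$, and since the whole-plane SLE$_\kappa(2)$ is characterized by its driving process, one concludes that under $M_{\tau_\rho}\cdot\nu^\#_{0\to\infty}$ the image $W(\gamma)$, parametrized by whole-plane capacity, is a whole-plane SLE$_\kappa(2)$ curve from $0$ to $\infty$ stopped at $\tau_{W(\rho)}=u(\tau_\rho)$. Taking the pushforward under $W$ then yields the claimed identity. The main obstacle is verifying the martingale property of $(M_t)$ down to $t=-\infty$; this needs the delicate estimate in Proposition \ref{Prop-whole} (via the modulus bound between $K_t$ and a separating Jordan curve) to ensure both that the stochastic integrals in (\ref{sinY'}), (\ref{W'lambda'}), (\ref{W'q'}) are well-defined and integrable near $-\infty$, and that the limit $M_{-\infty}=1$ holds. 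Once this is in place, the rest is a clean Girsanov-plus-Loewner identification.
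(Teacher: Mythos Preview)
Your proposal is correct and follows essentially the same approach as the paper: establish uniform boundedness of $M_t$ on $(-\infty,\tau_\rho]$ via Proposition~\ref{Prop-whole} and the modulus argument, use $M_{-\infty}=1$ to get $\EE[M_{\tau_\rho}]=1$, apply Girsanov to identify the new drift in $Y_{u(t)}$, and time-change by $u$ (choosing $\tau=u^{-1}(b)$ for arbitrarily negative $b$) to recover the whole-plane SLE$_\kappa(2)$ SDE for $(e^{i\sigma_s};e^{ip_s})$. The paper carries out exactly these steps in the discussion immediately preceding the lemma statement, so there is nothing to add.
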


As a corollary, we obtain the following lemma about the absolute continuity between the laws of  whole-plane SLE$_\kappa(2)$ curves.

\begin{Lemma}
Let $w\in\C\sem\{0\}$.	Let $\rho$ be a Jordan curve in $\C$, whose interior contains $0$, and whose exterior contains $w$. Let $\tau$ be the hitting time at $\rho$. Then
$$\K_{\tau}(\nu^\#_{0\to w})(d\gamma_{\tau})=R_w(\gamma_{\tau})\cdot \K_{\tau}(\nu^\#_{0\to\infty})(d\gamma_{\tau}),$$
where, in terms of the whole-plane SLE$_\kappa(2)$ driving process $(e^{i\lambda_t};e^{iq_t})$ and the corresponding whole-plane Loewner maps $(g_t)$, $R_w(\gamma_{\tau})$ can be expressed by
$$R_w(\gamma_{\tau})=\frac{|w|^{2(2-d)}e^{(d-2)\tau}|g_{\tau}'(w)|^{2-d}(|g_{\tau}(w)|^2-1)^{\frac\kappa8+\frac8\kappa-2}}{|g_{\tau}(w)-e^{i\lambda_{\tau}}|^{\frac8\kappa-1}|g_{\tau}(w)-e^{i q_{\tau}}|^{\frac8\kappa-1}} .$$ \label{RN-whole'}
\end{Lemma}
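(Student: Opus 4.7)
The plan is to apply Lemma \ref{whole-conformal'} with the Möbius transformation $W(z)=wz/(w-z)$, which satisfies $W(0)=0$, $W(w)=\infty$, $|W'(0)|=1$, and $S(W)\equiv 0$. Since whole-plane SLE$_\kappa(2)$ from $0$ to $w$ is by definition the image of whole-plane SLE$_\kappa(2)$ from $0$ to $\infty$ under any Möbius transformation fixing $0$ and sending $\infty$ to $w$, we have $\nu^\#_{0\to w}=W^{-1}(\nu^\#_{0\to\infty})$ modulo time change. The hypothesis on $\rho$ exactly matches that of Lemma \ref{whole-conformal'} (note $W^{-1}(\infty)=w$). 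Pushing forward its conclusion by $W^{-1}$, and using that $W^{-1}$ maps the hitting time at $W(\rho)$ to that at $\rho$, yields $\K_\tau(\nu^\#_{0\to w})=M_\tau\cdot\K_\tau(\nu^\#_{0\to\infty})$, so the task reduces to identifying $M_\tau$ with the explicit $R_w(\gamma_\tau)$.

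The central observation is that $W_s=h_{u(s)}\circ W\circ g_s^{-1}$ is a conformal self-map of $\D^*$ for every $s<\tau_\rho$, being the composition of three conformal bijections between $\D^*$ and $\ha\C$-minus-hull domains. Since every conformal automorphism of $\D^*$ is Möbius, and $W_s$ has a simple pole at $g_s(w)$ (because $W$ sends $w\in\ha\C\sem K_s$ to $\infty$), we may write $W_s(z)=e^{i\phi_s}(1-\overline{g_s(w)}z)/(z-g_s(w))$. A direct calculation gives
\[
|W_s'(z)|=\frac{|g_s(w)|^2-1}{|z-g_s(w)|^2},\qquad |W_s(z_1)-W_s(z_2)|=\frac{|z_1-z_2|(|g_s(w)|^2-1)}{|z_1-g_s(w)|\,|z_2-g_s(w)|}.
\]
Applying these at $(\lambda_\tau,q_\tau)$ and substituting into (\ref{Nt'}) (using $\til W_s'(\lambda_s)=|W_s'(e^{i\lambda_s})|$ and $\sin_2(Y_{u(\tau)})=|W_\tau(e^{i\lambda_\tau})-W_\tau(e^{iq_\tau})|/2$), the $\til W_\tau'$ factors and the ratio $\sin_2(Y_{u(\tau)})/\sin_2(X_\tau)$ collapse into
\[
N_\tau=\left(\frac{|g_\tau(w)|^2-1}{|g_\tau(w)-e^{i\lambda_\tau}|\,|g_\tau(w)-e^{iq_\tau}|}\right)^{(8-\kappa)/\kappa}.
\]

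To handle the exponential correction in (\ref{Mt'}) I would combine two identities. First, (\ref{W2-1'}) with $|W'(0)|=1$ gives $\int_{-\infty}^\tau(\til W_s'(\lambda_s)^2-1)\,ds=u(\tau)-\tau$. Second, a short Schwarzian computation using $S(e^i)\equiv 1/2$, $S(W_s)\equiv 0$, and the relation $\til W_s'(z)=e^{iz}W_s'(e^{iz})/W_s(e^{iz})$ yields the identity $S(\til W_s)(z)=\frac{1}{2}(1-\til W_s'(z)^2)$, so $\int_{-\infty}^\tau S(\til W_s)(\lambda_s)\,ds=-(u(\tau)-\tau)/2$. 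Plugging both integrals into (\ref{Mt'}) and using (\ref{cc}) together with the arithmetic identity $\frac{24-\kappa}{12\kappa}+\frac{\cc}{12}=\frac{8-\kappa}{8}=2-d$, the two corrections merge into a single clean factor $\exp((2-d)(u(\tau)-\tau))$.

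Finally, I would compute $u(\tau)=\log h'_{u(\tau)}(\infty)$ from the decomposition $h_{u(\tau)}=W_\tau\circ g_\tau\circ W^{-1}$. Expanding $W^{-1}(z)=w-w^2/z+O(z^{-2})$ near $\infty$ and using the residue of $W_\tau$ at its pole $g_\tau(w)$ (equal to $e^{i\phi_\tau}(1-|g_\tau(w)|^2)$), the leading-order asymptotics of $h_{u(\tau)}$ at infinity give $e^{u(\tau)}=|w|^2\,|g_\tau'(w)|/(|g_\tau(w)|^2-1)$. Multiplying $N_\tau$ by $(e^{u(\tau)}/e^\tau)^{2-d}$ and simplifying the exponent of $(|g_\tau(w)|^2-1)$ via $(8-\kappa)/\kappa+(d-2)=\kappa/8+8/\kappa-2$ yields exactly the claimed formula for $R_w(\gamma_\tau)$. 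The main obstacle I anticipate is the Schwarzian identity: it is only a few lines of composition calculation but easy to miscompute, and the collapse of the correction exponent into the single factor $\exp((2-d)(u(\tau)-\tau))$ hinges on its exact form.
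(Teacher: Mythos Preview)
Your proof is correct and follows essentially the same approach as the paper's: apply Lemma~\ref{whole-conformal'} with a M\"obius automorphism of $\ha\C$ fixing $0$ and sending $w$ to $\infty$, then identify $M_\tau$ explicitly using that $W_t$ is a M\"obius automorphism of $\D^*$ with pole at $g_t(w)$. The only cosmetic difference is the choice of map: the paper takes $W(z)=z/(w-z)$ (so $|W'(0)|=1/|w|$), whereas you take $W(z)=wz/(w-z)$ (so $|W'(0)|=1$); this merely shifts where the factor $|w|^{2(2-d)}$ appears---in the paper it comes partly from (\ref{W2-1'}), for you entirely from $e^{u(\tau)}$---but the Schwarzian identity $S(\til W_t)(\lambda_t)=-(\til W_t'(\lambda_t)^2-1)/2$, the collapse of the exponential correction via $\frac{24-\kappa}{12\kappa}+\frac{\cc}{12}=2-d$, and the final assembly are identical.
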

\begin{proof}
	Let $W(z)=\frac{z}{w-z}$. Then $W:(\ha\C;0,w)\conf(\ha\C;0,\infty)$. Thus, $W^{-1}(\nu^\#_{0\to\infty})=\nu^\#_{0\to w}$. From Lemma \ref{whole-conformal'}, we know that $$ \K_{\tau_\rho}(\nu^\#_{0\to w})=M_{\tau_\rho}\cdot \K_{\tau_\rho}(\nu^\#_{0\to\infty}),$$
	where $M_{\tau}$ is the value of $(M_t)$ defined by (\ref{Nt'},\ref{Mt'}) for the above $W$ at the time $\tau$.
	
	We have $U=V=\ha C$. So for all $t<\tau_U$, $U_t=V_t=\D^*$, and $W_t:\D^*\conf\D^*$. From (\ref{circ'}) and that $h_s(\infty)=\infty$, we know that $W_t$ maps $g_t(w)$ to $\infty$. Thus, there is $C_t\in\TT$ such that
	$W_t(z)=C_t \frac{1-\lin{g_t(w)}z}{z-g_t(w)}$. So we have
	$$\til W_t'(x)=|W_t'(e^{ix})|=\frac{|g_t(w)|^2-1}{|g_t(w)-e^{ix}|^2},\quad x\in\R;$$
	 $$\frac{\sin_2(Y_t)}{\sin_2(X_t)}=\frac{|W_t(e^{i\lambda_t})-W_t(e^{iq_t})|}{|e^{i\lambda_t}-e^{iq_t}|}=\frac{|g_t(w)|^2-1}{|g_t(w)-e^{i \lambda_t}| |g_t(w)-e^{i q_t}|}.$$
	Combining the above formulas, we get
	\BGE N_t=\Big(\frac{|g_t(w)|^2-1}{|g_t(w)-e^{i \lambda_t}| |g_t(w)-e^{i q_t}|}\Big)^{\frac8\kappa-1}.\label{Nt''}\EDE
	Since $W_t$ is a M\"obius Transformation, we have $SW_t\equiv 0$. Since $e^i\circ \til W_t=W_t\circ e^i$, a straightforward calculation gives
	$$S(\til W_t)(\lambda_t)=-(\til W_t'(\lambda_t)^2-1)/2.$$
	Thus, from (\ref{W2-1'},\ref{cc}) we have
	\BGE \exp\Big(-\frac{\kappa -24}{12\kappa}\int_{-\infty}^t (\til W_s'(\lambda_s)^2-1)ds- \frac{\cc}6\int_{-\infty}^t  S(\til W_s)(\lambda_s)ds \Big)=(|w|e^{u(t)-t})^{1-\frac{\kappa}{8}}.\label{exp-int}\EDE
	Since $e^{u(t)}=h_{u(t)}'(\infty)$, using (\ref{circ'}) and the expressions of $W$ and $W_t$, we get $e^{u(t)}=\frac{|w||g_t'(w)|}{|g_t(w)|^2-1}$. Combining (\ref{Mt'},\ref{Nt''},\ref{exp-int}), we find that $M_\tau=R_w(\gamma_\tau) $, as desired.
\end{proof}

We use the following lemma to relate the integral of $S(\til W_t)(\lambda_t)$ in (\ref{Mt'}) with the normalized Brownian loop measure $\Lambda^*$ defined by (\ref{normalized-Brownian}).

\begin{Lemma}
	For any time $\tau<\tau_U$,
	$$\Lambda^*(\beta_{u(\tau)},V^c)-\Lambda^*(\gamma_\tau, U^c)=\frac 16 \int_{-\infty}^\tau S(\til W_t)(\lambda_t)dt+\frac 1{12}\int_{-\infty}^\tau (\til W_t'(\lambda_t)^2-1)dt,$$ \label{lemma-loop}
	where $\gamma_\tau$ and $\beta_{u(\tau)}$ are the parts of $\gamma$ and $\beta$ up to $\tau$ and $u(\tau)$, respectively.
\end{Lemma}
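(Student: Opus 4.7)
The plan is to establish the identity by showing that both sides vanish as $\tau\to -\infty$ and have identical $\tau$-derivatives. For the right-hand side, both improper integrals converge at $-\infty$ by the bound (\ref{estW'}), and its $\tau$-derivative is immediately $\frac 16 S(\til W_\tau)(\lambda_\tau)+\frac{1}{12}(\til W_\tau'(\lambda_\tau)^2-1)$. The bulk of the work is therefore to compute the $\tau$-derivative of the left-hand side and verify that it matches.

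To compute this derivative, I would use a decomposition together with the conformal invariance of $\mu^{\lloop}$. For $r>0$ small enough that $\lin B(0,r)\subset U$ and $W(\lin B(0,r))\subset V$, a loop in $\ha\C\sem \lin B(0,r)$ that hits $\gamma_\tau$ either remains in $U$ or leaves it, hence
$$\mu^{\lloop}_{\ha\C\sem \lin B(0,r)}({\cal L}(\gamma_\tau,U^c)) = \mu^{\lloop}_{\ha\C\sem \lin B(0,r)}({\cal L}(\gamma_\tau)) - \mu^{\lloop}_{U\sem \lin B(0,r)}({\cal L}(\gamma_\tau)),$$
with an analogous identity for $\beta_{u(\tau)}$ and $V^c$. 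Since $W$ maps $U\sem \lin B(0,r)$ conformally onto $V\sem W(\lin B(0,r))$, conformal invariance of the Brownian loop measure yields $\mu^{\lloop}_{U\sem \lin B(0,r)}({\cal L}(\gamma_\tau))=\mu^{\lloop}_{V\sem W(\lin B(0,r))}({\cal L}(\beta_{u(\tau)}))$. Substituting, the $\log\log(1/r)$ normalizations cancel and the LHS reduces, in the limit $r\to 0$, to limits of Brownian-loop-measures of loops hitting a growing whole-plane Loewner hull. The $\tau$-derivative of such a quantity has an explicit expression in terms of a Brownian-excursion-type kernel localized at the driving point $e^{i\lambda_\tau}$. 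After incorporating the time-change factor $u'(\tau)=\til W_\tau'(\lambda_\tau)^2$ from (\ref{u''}), the difference between the rate for $\gamma$ (in $U$) and that for $\beta$ (in $V$) contracts to exactly $\frac 16 S(\til W_\tau)(\lambda_\tau)+\frac{1}{12}(\til W_\tau'(\lambda_\tau)^2-1)$. This is the whole-plane analog of the familiar Lawler formula relating the rate of change of $\mu^{\lloop}$ under a Loewner chain to the central-charge Schwarzian term.

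The initial condition is then checked by the same decomposition: Koebe distortion gives $W(\lin B(0,r))\approx |W'(0)|\lin B(0,r)$ for small $r$, so the two $\log\log(1/r)$ divergences of the two $\Lambda^*$ terms cancel to leading order, and the remainder vanishes as $\gamma_\tau$ collapses to $\{0\}$. The main obstacle I foresee is justifying the interchange of the $\tau$-derivative with the $r\to 0$ limit; this is controlled using the uniform bounds (\ref{estW'}) on $S(\til W_t)$ and $\til W_t'-1$ near the unit circle, together with standard estimates on the Brownian-excursion kernel at the tip of a growing Loewner hull, entirely parallel in spirit to the It\^o computations already performed for the local martingale $M_t$.
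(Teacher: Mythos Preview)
Your overall strategy---show both sides vanish as $\tau\to-\infty$ and have the same $\tau$-derivative---is the right one, and it is exactly what the paper does. However, your execution contains a genuine gap in the decomposition step. You write
$$\mu^{\lloop}_{\ha\C\sem \lin B(0,r)}({\cal L}(\gamma_\tau,U^c)) = \mu^{\lloop}_{\ha\C\sem \lin B(0,r)}({\cal L}(\gamma_\tau)) - \mu^{\lloop}_{U\sem \lin B(0,r)}({\cal L}(\gamma_\tau)),$$
but both terms on the right are \emph{infinite}: arbitrarily small Brownian loops accumulate on every point of $\gamma_\tau$, so the total mass of loops hitting $\gamma_\tau$ diverges regardless of the ambient domain. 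The subtraction is therefore ill-defined, and the subsequent use of conformal invariance on one of these infinite pieces does not make sense as stated. The correct object to work with is not the measure of all loops hitting $\gamma_\tau$, but only those that also exit $U$ (respectively $V$), which is exactly what $\Lambda^*$ already encodes.

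The paper bypasses this difficulty by invoking the \emph{Brownian bubble decomposition} of the loop measure directly. Using (\ref{normalized-Brownian-equality}) and the decomposition theorem from \cite{loop}, one writes
$$\Lambda^*(\gamma_\tau,U^c)=\lim_{a\to-\infty}\Big[\int_a^\tau \mu^{\bub}_{\lambda_t}({\cal L}(\D^*\sem U_t))\,dt-\log(-a)\Big],$$
and similarly for $\beta$ with the time-change factor $\til W_t'(\lambda_t)^2$ from (\ref{u''}). This makes the $\tau$-derivative manifest as a difference of bubble measures, and the key computation---which you defer to ``the whole-plane analog of the familiar Lawler formula''---is carried out explicitly via the Poisson-kernel representation $\mu^{\bub}_{x}({\cal L}(\D^*\sem\Omega))=\lim_{z\to e^{ix}}|z-e^{ix}|^{-2}(1-\til P^\Omega_x(z)/\til P^{\D^*}_x(z))$. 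A power-series expansion of $\til P^{\D^*}_{\sigma_{u(t)}}\circ\til W_t$ at $\lambda_t$ then yields precisely $\frac16 S(\til W_t)(\lambda_t)+\frac1{12}(\til W_t'(\lambda_t)^2-1)$; the extra $\frac1{12}(\til W_t'^2-1)$ term, absent in the chordal setting, comes from the curvature of $\cot_2$ compared to $1/z$. You should replace your decomposition with the bubble-measure representation and perform this Poisson-kernel computation explicitly.
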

\begin{proof}
	We use the Brownian bubble analysis of Brownian loop measure. Let $\mu^{\bub}_{ {x_0}}$ denote the Brownian bubble measure in $\D^*$ rooted at $e^{ix_0}\in\TT$ as defined in \cite{loop}. From the decomposition theorem of Brownian loop measure and (\ref{normalized-Brownian-equality}), we know that
	\BGE \Lambda^*(\gamma_\tau, U^c)=\lim_{a\to -\infty} \int_{a}^\tau \mu^{\bub}_{\lambda_t}({\cal L}(\D^*\sem U_t))dt-\log(-a);\label{loop-bubbleU}\EDE
	$$ \Lambda^*(\beta_{u(\tau)},V^c)=\lim_{a\to -\infty} \int_{u(a)}^{u(\tau)} \mu^{\bub}_{\sigma_s}({\cal L}(\D^*\sem V_s))ds-\log(-u(a))$$
	\BGE =\lim_{a\to -\infty} \int_{a}^\tau  \til W_t'(\lambda_t)^2 \mu^{\bub}_{\sigma_{u(t)}}({\cal L}(\D^*\sem V_{u(t)}))dt-\log(-a),\label{loop-bubbleV}\EDE
where we used the facts that $\Lambda^*(\gamma_a,U^c)+\log(-a)\to 0$  and
 $u(a)-a\to \log(W'(0))$ as $a\to -\infty$. The former can be  derived using the argument in \cite{normalize}.

	If $U$ is a subdomain of $\D^*$ that contains a neighborhood of $\TT$ in $\D^*$, we let $P^{U}_{x_0}$ denote the Poisson kernel in $U$ with the pole at $e^{ix_0}$, and $\til P^U_{x_0}=P^U_{x_0}\circ e^i$. Especially, $P^{\D^*}_{x_0}(z)=\frac 1{2\pi} \Ree \frac{z+e^{ix_0}}{z-e^{ix_0}}$ and $\til P^{\D^*}_{x_0}(z)=\Imm \cot_2(z-x_0)$.
	From \cite{loop} we know
	\begin{align*}
	\mu^{\bub}_{{\lambda_t}}({\cal L}({\D^*\sem U_t})) =\lim_{U_t\ni z\to e^{i\lambda_t}}\frac{1}{|z-e^{i\lambda_t}|^2}\Big(1-\frac{P^{U_t}_{\lambda_t}(z)}{P^{\D^*}_{\lambda_t}(z)}\Big)
	=\lim_{\til U_t\ni z\to {\lambda_t}}\frac{1}{|z-{\lambda_t}|^2}\Big(1-\frac{\til P^{U_t}_{\lambda_t}(z)}{\til P^{\D^*}_{\lambda_t}(z)}\Big).
	\end{align*}
	Similarly, using (\ref{W(lambda)'}) and that $\til W_t:\til U_t\conf\til V_{u(t)}$, we get
	\begin{align*}
	\mu^{\bub}_{\sigma_{u(t)}}({\cal L}({\D^*\sem V_{u(t)}}))&= \lim_{\til V_{u(t)}\ni w\to {\sigma_{u(t)}}}\frac{1}{|w-{\sigma_{u(t)}}|^2}\Big(1-\frac{\til P^{V_{u(t)}}_{\sigma_{u(t)}}(w)}{\til P^{\D^*}_{\sigma_{u(t)}}(w)}\Big)\\
	&= \lim_{\til U_{t}\ni z\to {\lambda_t}} \frac{1}{|\til W_t(z) -\til W_t(\lambda_t)|^2}\Big(1-\frac{\til P^{V_{u(t)}}_{\sigma_{u(t)}}\circ \til W_t(z)}{\til P^{\D^*}_{\sigma_{u(t)}}\circ \til W_t(z)}\Big)\\
	&= \lim_{\til U_{t}\ni z\to {\lambda_t}} \frac{\til W_t'(\lambda_t)^{-2}}{|z-\lambda_t|^2}\Big(1-\frac{\til W_t'(\lambda_t)^{-1}\til P^{U_t}_{\lambda_t}(z) }{\til P^{\D^*}_{\sigma_{u(t)}}\circ \til W_t(z)}\Big).
	\end{align*}
	Combining the above two formulas, we get $$\til W_t'(\lambda_t)^2 \mu^{\bub}_{{\sigma_{u(t)}}}({\cal L}({\D^*\sem V_{u(t)}}))-\mu^{\bub}_{{\lambda_t}}({\cal L}({\D^*\sem U_t}))=\lim_{ z\to {\lambda_t}}\frac{1}{|z-{\lambda_t}|^2}\Big(\frac{\til P^{U_t}_{\lambda_t}(z)}{\til P^{\D^*}_{\lambda_t}(z)}-\frac{\til W_t'(\lambda_t)^{-1}\til P^{U_t}_{\lambda_t}(z) }{\til P^{\D^*}_{\sigma_{u(t)}}\circ \til W_t(z)}\Big)$$
	$$=\frac 16 S(\til W_t)(\lambda_t)+\frac 1{12} (\til W_t'(\lambda_t)^2-1),$$
	where the latter equality follows from some tedious but straightforward computation involving power series expansions. This together with (\ref{loop-bubbleU},\ref{loop-bubbleV}) completes the proof of Lemma \ref{lemma-loop}
\end{proof}

\section{SLE Loop Measures in $\ha\C$}\label{Section-C}
 We first construct rooted SLE loop measures $\mu^1_z$, $z\in\ha\C$, in $\ha\C$. The superscript $1$ means that the curve has one root, and the subscript $z$ means that the root is $z$.

\begin{Theorem} [Rooted loops]
Let
$ G_{\C}(w)=   |w|^{-2(2-d)}$. We have the following.
\begin{enumerate} [label=(\roman*)]
\item  For each $z\in\C$, there is a unique $\sigma$-finite measure $\mu^1_z$, which is supported by non-degenerate loops in $\ha\C$ rooted (start and end) at $z$ which possess  Minkowski content measure (in $\C$) that is parametrizable, and satisfies
\BGE \mu^1_z(d\gamma)\otimes {\cal M}_\gamma(dw)=\nu^\#_{z\rt w}(d\gamma)\overleftarrow{\otimes} G_{\C}(w-z)\cdot \mA^2(dw).\label{decomposition-whole-loop}\EDE
Moreover, $\mu^1_z$ satisfies the reversibility, and may be expressed by
\BGE \mu^1_z=\Cont(\cdot)^{-1}\cdot \int_{\C\sem \{z\}} \nu^\#_{z\rt w} G_{\C}(w-z)\mA^2(dw).\label{defofloop}\EDE
 \item  For every $z\in\C$, $\mu^1_z$ satisfies the following CMP. Let $T_z$ be the time that the loop returns to $z$. Then for any nontrivial stopping time $\tau$, we have
\BGE \K_\tau(\mu ^1_z|_{\{\tau<T_z\}})(d\gamma_\tau)\oplus \mu^\#_{\ha\C(\gamma_\tau;z);(\gamma_\tau)_{\tip}\to z}(d\gamma^\tau)=\mu^1_z|_{\{\tau<T_z\}},\label{CMP-chordal-loop}\EDE
where implicitly stated in the formula is that $\K_\tau(\mu^1_z|_{\{\tau<T_z\}})$ is supported by $\Gamma(\ha\C;z)$.
\item Suppose the law of  a random curve $\gamma$ is $\mu^1_0$. Let $\gamma$ be parametrized by its Minkowski content measure such that $\gamma(0)=0$. Let $a\in\R$ be a fixed deterministic number. Then the law of the random curve ${\cal T}_a(\gamma)$ defined by ${\cal T}_a(\gamma)(t)=\gamma(a+t)-\gamma(a)$ is also $\mu^1_0$.
\item Let  $J(z)=-1/z$, and $ \mu^1_\infty=J(\mu^1_0)$.
Then $\mu^1_\infty$ is  supported by loops in $\ha\C$ rooted at $\infty$, which possesses Minkowski content measure (in $\C$) that is parametrizable for the loop without $\infty$, and satisfies
\BGE \mu^1_\infty(d\gamma)\otimes {\cal M}_\gamma(dw)=\nu^\#_{\infty\rt w}(d\gamma)\overleftarrow{\otimes} \mA^2(dw).\label{decomposition-loop-infty}\EDE
Moreover, for any bounded set $S\subset \C$, $\mu^1_\infty$-a.s.\ $\lin\Cont(\gamma\cap S)<\infty$.
\item For each $z\in\ha\C$, the measures $\mu^1_z$ satisfies M\"obius covariance as follows. If $F$ is a  M\"obius transformation that fixes $z$, then $F(\mu^1_z)=|F'(z)|^{2-d} \mu^1_z$. In the case $z=\infty$, this means that $F(z)=az+b$ for some $a,b\in\C$ with $a\ne 0$, and $F(\mu^1_\infty)=|a|^{d-2} \mu^1_\infty$.
\item For any $r>0$ and $z\in\C$, $\mu^1_z(\{\gamma:\diam(\gamma)>r\})$ and $\mu^1_z(\{\gamma:\lin{\Cont}(\gamma)>r\})$ are finite. Moreover,
there are constants $C_1,C_2\in (0,\infty)$ such that $\mu^1_z(\{\gamma:\diam(\gamma)>r\})=C_1r^{d-2}$ and $\mu^1_z(\{\gamma:\lin{\Cont}(\gamma)>r\})=C_2 r^{(d-2)/d}$ for any $z\in\C$ and $r>0$.
 \item  For $z\in\C$, if a measure $\mu'$ supported by non-degenerate loops rooted at $z$ satisfies (ii) and that $\mu'(\{\gamma:\diam(\gamma)>r\})<\infty$ for every $r>0$, then $\mu'=c\mu^1_z$ for some $c\in[0,\infty)$.
\end{enumerate}
\label{Thm-loop-measure}
\end{Theorem}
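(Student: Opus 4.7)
The plan is to reduce uniqueness to identifying the initial-segment distribution of $\mu'$ near $z$. Fix $z\in\C$ and, for $r>0$, let $\tau_r$ be the first exit of the loop from $B(z,r)$, and set $\nu^r_\mu:=\K_{\tau_r}(\mu|_{\{\tau_r<T_z\}})$ for $\mu\in\{\mu',\mu^1_z\}$; both are finite measures since $\{\tau_r<T_z\}\subset\{\diam(\gamma)>r\}$, by the hypothesis on $\mu'$ and by part (vi) for $\mu^1_z$. Applying (ii) to each, $\mu|_{\{\tau_r<T_z\}}$ equals the concatenation of $\nu^r_\mu$ with the chordal SLE$_\kappa$ tail from the tip to $z$. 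Because $\mu'$ is supported on non-degenerate loops, $\{\tau_r<T_z\}$ exhausts $\mu'$ as $r\downarrow 0$, so it suffices to show $\nu^r_{\mu'}=c\,\nu^r_{\mu^1_z}$ for an $r$-independent constant $c\ge 0$.

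The main step is to pass to joint measures on (loop, interior point $w$) and invoke the CMP characterization of whole-plane SLE$_\kappa(2)$. Apply Proposition \ref{decomposition-Thm} to the chordal SLE tail in the CMP decomposition of $\mu|_{\{\tau_r<T_z\}}$, obtaining
$$\mu|_{\{\tau_r<T_z\}}(d\gamma)\otimes\M_{\gamma^{\tau_r}}(dw) = \nu^r_\mu(d\gamma_{\tau_r})\oplus \nu^\#_{\ha\C(\gamma_{\tau_r};z);(\gamma_{\tau_r})_{\tip}\to w\to z}(d\gamma^{\tau_r})\overleftarrow{\otimes}G_{\ha\C(\gamma_{\tau_r};z);(\gamma_{\tau_r})_{\tip}\to z}(w)\mA^2(dw).$$
Restricting to $w$ with $|w-z|>r$ (so that $w$ is automatically on the tail) and marginalizing out $\gamma^{\tau_r}$ gives a $(\gamma_{\tau_r},w)$-marginal of the form $\nu^r_\mu(d\gamma_{\tau_r})\,G_{\ha\C(\gamma_{\tau_r};z);(\gamma_{\tau_r})_{\tip}\to z}(w)\,\mA^2(dw)$, since the two-sided radial kernel is a probability measure. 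On the other hand, the \emph{combined first arm} $\alpha$ -- the initial segment concatenated with the first arm of the two-sided radial SLE (a radial SLE$_\kappa(2)$ with force point at $z$) -- is a curve from $z$ to $w$ whose conditional law given $w$ satisfies the CMP defining whole-plane SLE$_\kappa(2)$ from $z$ to $w$: at any nontrivial stopping time $\sigma<T_w$, when $\sigma>\tau_r$ the continuation is a radial SLE$_\kappa(2)$ to $w$ with force point at $z$ by the CMP of two-sided radial SLE, and when $\sigma\le\tau_r$ the same follows by combining (ii) (giving a chordal SLE tail of $\mu$ beyond $\sigma$) with a second application of Proposition \ref{decomposition-Thm} to that tail at $w$. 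Consequently the conditional law of $\alpha$ given $w$ equals the probability measure $\nu^\#_{z\to w}$, so the $(\gamma_{\tau_r},w)$-marginal also equals $\K_{\tau_r}(\nu^\#_{z\to w})(d\gamma_{\tau_r})\,H^\mu(w)\,\mA^2(dw)$ for some nonnegative $H^\mu$, where $H^{\mu^1_z}(w)=G_\C(w-z)$ directly from (\ref{decomposition-whole-loop}).

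Equating the two expressions for the $(\gamma_{\tau_r},w)$-marginal yields, for every $|w-z|>r$,
$$\nu^r_\mu(d\gamma_{\tau_r})\,G_{\ha\C(\gamma_{\tau_r};z);(\gamma_{\tau_r})_{\tip}\to z}(w) = H^\mu(w)\,\K_{\tau_r}(\nu^\#_{z\to w})(d\gamma_{\tau_r}).$$
Dividing this identity for $\mu=\mu'$ by the one for $\mu=\mu^1_z$ cancels the Green's function and $\K_{\tau_r}(\nu^\#_{z\to w})$, leaving $d\nu^r_{\mu'}/d\nu^r_{\mu^1_z}(\gamma_{\tau_r})=H^{\mu'}(w)/G_\C(w-z)$. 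Since the left side depends only on $\gamma_{\tau_r}$ and the right only on $w$, both equal a constant $c$ that is manifestly independent of $\gamma_{\tau_r}$, $w$, and $r$; hence $\nu^r_{\mu'}=c\,\nu^r_{\mu^1_z}$ for every $r>0$, and letting $r\downarrow 0$ gives $\mu'=c\mu^1_z$. The main obstacle I anticipate is the CMP verification for the combined first arm when $\sigma$ straddles $\tau_r$: one must carefully stitch the chordal tail of $\mu$ given by (ii) to the two-sided radial SLE kernel and confirm that the joint object is exactly a radial SLE$_\kappa(2)$ with force point at $z$ in the remaining domain, which technically reduces to consistency between two applications of Proposition \ref{decomposition-Thm} at different stopping times.
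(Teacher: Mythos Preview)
Your proposal is correct and rests on the same core mechanism as the paper's proof of (vii): both arguments feed the CMP hypothesis (ii) through Proposition~\ref{decomposition-Thm} to produce a measure on initial segments that satisfies the conformal Markov property of whole-plane SLE$_\kappa(2)$, and then invoke the uniqueness of the latter to pin down the initial-segment law up to a scalar. The difference is in the target and the bookkeeping. The paper fixes two scales $s<r$ and a compact $K$, uses the explicit identity $Q(\gamma_\tau)G_{\gamma_\tau}(w)=R_w(\gamma_\tau)G_{\C}(w)$ together with the radial-to-radial Radon--Nikodym relation (\ref{CMP-radial}) to rewrite everything in the $\nu^\#_{0\to\infty}$ picture, and then checks directly that the resulting finite measure $\nu'_{r;K}$ satisfies the CMP for whole-plane SLE$_\kappa(2)$ aimed at $\infty$. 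You instead disintegrate the joint $(\gamma,w)$ measure over $w$ and argue the CMP for whole-plane SLE$_\kappa(2)$ aimed at $w$ directly from two applications of Proposition~\ref{decomposition-Thm}, never writing down $Q$ or $R_w$.

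What your route buys is a cleaner, coordinate-free argument; what the paper's route buys is that the ``stitching'' step you flag as the main obstacle is replaced by the concrete identity (\ref{CMP-radial}) (valid for $\K_{\tau_s}(\nu^\#_{0\to\infty})$-a.e.\ $\gamma_{\tau_s}$, hence for $\K_{\tau_s}(\mu')$-a.e.\ $\gamma_{\tau_s}$ once absolute continuity is in hand), so no separate consistency check between the two Prop.~\ref{decomposition-Thm} decompositions is needed. In your version that consistency boils down to the observation that both decompositions represent the \emph{same} joint measure $\mu'(d\gamma)\otimes\M_\gamma(dw)$ restricted to $\{|w-z|>r\}$, since the part of $\gamma$ before $\tau_r$ lies in $\overline{B(z,r)}$ and contributes nothing there; it is then cleanest to verify the CMP only at the hitting times $\tau_s$, for which $\tau_s\le\tau_r$ or $\tau_s\ge\tau_r$ holds deterministically, and this family suffices for the whole-plane uniqueness. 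Both arguments leave that uniqueness statement (a finite measure on non-degenerate curves from $z$ satisfying the whole-plane SLE$_\kappa(2)$ CMP is a constant multiple of $\nu^\#_{z\to\cdot}$) as an unproved but standard input.
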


The following theorem is about unrooted SLE loop measure. By an unrooted loop we mean an equivalence class of continuous functions defined on $\TT$, where $\gamma_1$ and $\gamma_2$ are equivalent if there is a orientation-preserving auto-homeomorphism $\phi$ of $\TT$ such that $\gamma_2=\gamma_1\circ \phi$. We may view the two-sided whole-plane SLE$_\kappa$ measure $\nu^\#_{z\rt w}$ as a measure on unrooted loops. By reversibility of two-sided whole-plane SLE$_\kappa$, we get $\nu^\#_{z\rt w}=\nu^\#_{w\rt z}$.

\begin{Theorem} [Unrooted loops] Let
	$ G_{\C}(w)=   |w|^{-2(2-d)}$. Define the measure $\mu^0$ on unrooted loops by
\BGE \mu^0=\Cont(\cdot)^{-2} \cdot \int_{\C}\!\int_{\C} \nu^\#_{z\rt w} G_{\C}(w-z)\mA^2(dw)\mA^2(dz).\label{loop-unrooted}\EDE
Then $\mu^0$ is a $\sigma$-finite measure that satisfies reversibility and the following properties.
  \begin{enumerate}  [label=(\roman*)]
    \item We have the equalities
\BGE \mu^0(d\gamma)\otimes {\cal M}_\gamma(dz)= \mu^1_z(d\gamma) \overleftarrow{\otimes} \mA^2(dz);\label{decomposition-unrooted}\EDE
\BGE \mu^0(d\gamma)\otimes ({\cal M}_\gamma)^2(dz\otimes dw)= \nu^\#_{z\rt w}(d\gamma) \overleftarrow{\otimes} G_{\C}(w-z)\cdot(\mA^2)^2(dz\otimes dw).\label{decomposition-unrooted2}\EDE
    \item  For any  M\"obius transformation $F$, $F(\mu^0)=\mu^0$.
  \end{enumerate}
\label{Thm-loop-measure-unrooted}
\end{Theorem}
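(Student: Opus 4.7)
The plan is to derive both decomposition identities (\ref{decomposition-unrooted}) and (\ref{decomposition-unrooted2}) from (\ref{decomposition-whole-loop}) via a symmetry argument, and then to obtain M\"obius invariance by pushing (\ref{decomposition-unrooted}) forward under a M\"obius map and using the conformal covariance of the rooted measures $\mu^1_z$ together with that of the Minkowski content measure $\M_\gamma$. I will first introduce the auxiliary measures
\[
R(d\gamma, dz) = \mu^1_z(d\gamma) \overleftarrow{\otimes} \mA^2(dz), \qquad N(d\gamma, dz, dw) = R(d\gamma, dz) \otimes \M_\gamma(dw),
\]
viewed as measures on (unrooted loop, one or two points on the loop). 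Substituting (\ref{decomposition-whole-loop}) rewrites $N$ as
\[
N(d\gamma, dz, dw) = \nu^\#_{z \rt w}(d\gamma) \overleftarrow{\otimes} G_\C(w - z)\, \mA^2(dz)\, \mA^2(dw),
\]
which is manifestly symmetric in $(z, w)$: the factor $G_\C(w-z)$ depends only on $|w-z|$, and as an unrooted loop measure $\nu^\#_{z \rt w} = \nu^\#_{w \rt z}$ by the reversibility of two-sided whole-plane SLE$_\kappa$ recalled in Section \ref{CMP-section}. The resulting identity $R(d\gamma, dz) \otimes \M_\gamma(dw) = R(d\gamma, dw) \otimes \M_\gamma(dz)$ then forces the conditional distribution of $z$ under $R$ given $\gamma$ to be proportional to $\M_\gamma$; combined with $\Cont(\gamma) \in (0,\infty)$ almost surely (Lemma \ref{Mink-whole}), this yields $R(d\gamma, dz) = \mu^0(d\gamma) \otimes \M_\gamma(dz)$ for a uniquely determined measure $\mu^0$ on unrooted loops, which is (\ref{decomposition-unrooted}). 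Identity (\ref{decomposition-unrooted2}) is then immediate from $N = \mu^0 \otimes (\M_\gamma)^2$ on one side and the explicit expression for $N$ on the other, and integrating $(z, w)$ out of (\ref{decomposition-unrooted2}) recovers the defining formula (\ref{loop-unrooted}).

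For $\sigma$-finiteness, I will restrict (\ref{decomposition-unrooted}) to $\{z \in B(0;n),\ \diam(\gamma) > 1/n\}$ and invoke Theorem \ref{Thm-loop-measure}(vi) to see that $\int_{B(0;n)} \mu^1_z(\{\diam > 1/n\})\, \mA^2(dz)$ is finite; Markov's inequality then gives $\mu^0(\{\diam(\gamma) > 1/n,\ \M_\gamma(B(0;n)) > 1/n\}) < \infty$, and the union of these sets over $n$ is $\mu^0$-full. Reversibility of $\mu^0$ is immediate from the reversibility of $\nu^\#_{z \rt w}$ already exploited in the symmetry step.

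For the M\"obius invariance, let $F$ be a M\"obius transformation and push (\ref{decomposition-unrooted}) forward under $(\gamma, z) \mapsto (F(\gamma), F(z))$. By Lemma \ref{conformal-content} the pushforward of $\M_\gamma$ satisfies $F(\M_\gamma)(dz') = |F'(F^{-1}(z'))|^{-d}\, \M_{F(\gamma)}(dz')$, contributing the factor $|F'|^{-d}$ on the left. On the right, Theorem \ref{Thm-loop-measure}(v) combined with translation covariance gives $F(\mu^1_z) = |F'(z)|^{2-d}\, \mu^1_{F(z)}$, while the Jacobian of $F$ on Lebesgue contributes $|F'(F^{-1}(z'))|^{-2}$; together these produce the matching factor $|F'(F^{-1}(z'))|^{-d}$. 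After cancellation and a second application of (\ref{decomposition-unrooted}), I obtain $F(\mu^0)(d\gamma') \otimes \M_{\gamma'}(dz') = \mu^0(d\gamma') \otimes \M_{\gamma'}(dz')$, and integrating out $z'$ (using $\Cont(\gamma') > 0$ a.s.) yields $F(\mu^0) = \mu^0$. For $F$ sending a finite point to $\infty$, the same argument applies on the complement of the single exceptional preimage, which is $\mu^0$-null.

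The hard part will be the disintegration step: justifying rigorously that the swap-symmetry $R(d\gamma, dz) \otimes \M_\gamma(dw) = R(d\gamma, dw) \otimes \M_\gamma(dz)$ forces the product structure $R = \mu^0 \otimes \M_\gamma$. Concretely, for $\mu^0$-almost every unrooted $\gamma$ I need the conditional measure $R(\cdot \mid \gamma)$ to have constant Radon-Nikodym derivative with respect to $\M_\gamma$, which relies on the $\sigma$-finiteness of $\M_\gamma$ on the loop together with the freedom to test against separated product sets in $z$ and $w$. Once this is settled, the remaining properties are essentially bookkeeping from the corresponding facts about the rooted measures $\mu^1_z$.
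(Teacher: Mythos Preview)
Your approach is essentially the same as the paper's, and for part (ii) your M\"obius invariance argument matches the paper's almost verbatim. For part (i), however, you are making the argument harder than necessary: the ``hard part'' you flag---the abstract disintegration showing that the swap-symmetry $R(d\gamma,dz)\otimes\M_\gamma(dw)=R(d\gamma,dw)\otimes\M_\gamma(dz)$ forces $R=\mu^0\otimes\M_\gamma$---is avoidable, and the paper avoids it.

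The paper first observes, directly from (\ref{defofloop}) and (\ref{loop-unrooted}), that
\[
\mu^0=\Cont(\cdot)^{-1}\int_\C\mu^1_z\,\mA^2(dz),
\]
so the $\gamma$-marginal of your $R$ is already identified as $\Cont(\gamma)\,\mu^0(d\gamma)$. Then integrating (\ref{decomposition-whole-loop}) over $z$ gives on the left $\Cont(\gamma)\,\mu^0(d\gamma)\otimes\M_\gamma(dw)$, and on the right---after using reversibility $\nu^\#_{z\rt w}=\nu^\#_{w\rt z}$ and applying (\ref{defofloop}) once more, now for $\mu^1_w$---one gets $\Cont(\gamma)\,\mu^1_w(d\gamma)\overleftarrow{\otimes}\mA^2(dw)$. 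Cancelling the positive factor $\Cont(\gamma)$ yields (\ref{decomposition-unrooted}) directly. The reversibility you invoke to establish the $(z,w)$-symmetry of $N$ is exactly what the paper uses here; the point is that with the explicit formula (\ref{defofloop}) in hand, the conditional-on-$\gamma$ structure of $R$ is already computed, so no abstract disintegration is needed. Your route is correct, just longer.
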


\begin{Remark}
	The CMP of rooted SLE$_\kappa$ loop measures allows us to apply the SLE-based results and arguments to study SLE loop measures. In the next section, we will combine the generalized restriction property of chordal SLE with this CMP to   define SLE loop measures in multiply connected domains and general Riemann surfaces.
	
	 Another application of the CMP is to study the  multi-point Green's function for the rooted SLE loop measure:
	$$G_{z_0}(z_1,\dots,z_n):=\lim_{r_1,\dots,r_n\downarrow 0} \prod_{j=1}^n r_j^{d-2} \mu^1_{z_0}\{\gamma:\gamma\cap B(z_j;r_j)\ne\emptyset,1\le j\le n \},$$
	where $z_0,z_1,\dots,z_n$ are distinct points in $\C$.
	Using the CMP together with the results of \cite{Green} on multi-point Green's function for chordal SLE, it is not difficult to prove the existence and get up-to-constant sharp bounds for the Green's function here.
\end{Remark}

\begin{Remark}
	For $\kappa\ge 8$, we may construct a probability measure $\mu^\#_0$ on  loops rooted at $0$ that satisfies the CMP in Theorem \ref{Thm-loop-measure} (ii). For the construction, one may consider a whole-plane SLE$_\kappa(\kappa-6)$ curve started from $0$. Since $\kappa-6\ge\frac{\kappa}{2}-2$, $0$ is never separated by the curve from $\infty$.
	At any nontrivial stopping time $\tau$, conditional on the past of the curve, the rest of the curve is a radial SLE$_\kappa(\kappa-6)$ curve with $0$ being the force point. From \cite{SW} we know that this is a chordal SLE$_\kappa$ curve in the remaining domain aiming at $0$, but stopped at reaching $\infty$. Thus, we may construct a random curve with law $\mu^\#_0$ by continuing a whole-plane SLE$_\kappa(\kappa-6)$ curve with a chordal SLE$_\kappa$ curve from $\infty$ to $0$. The measure $\mu^\#_\infty:=J(\mu^\#_0)$ ($J(z)=1/z$) is invariant under translation and scaling; and for $\mu^\#_\infty$-a.s.\ $\gamma$, $\gamma$ visits every point in $\C$, and can be parametrized by the Lebesgue measure $\mA^2$. This measure agrees with the law of the space-filling SLE$_\kappa$ curve from $\infty$ to $\infty$ constructed in \cite{MS4}.
	The space-filling SLE$_\kappa$ from $\infty$ to $\infty$ was also defined for $\kappa\in(4,8)$ in \cite{MS4}. But that curve does not locally look like an ordinary SLE$_\kappa$ curve.
\label{>8}
\end{Remark}

\begin{Remark}
	Theorem \ref{Thm-loop-measure} (without (vii)) and Theorem \ref{Thm-loop-measure-unrooted} also hold for $\kappa=0$, and the proofs are quite simple. Here we note that a two-sided whole-plane SLE$_0$ curve from $z$ to $z$ passing through $w$ is a random circle in $\ha\C$ passing through $z$ and $w$ such that the angle of the curve at $z$ or $w$ is uniform in $[0,2\pi)$. The rooted SLE$_0$ loop measure $\mu^1_0$ turns out to be supported by circles passing through $0$, which are radially symmetric, and the distance of the center of the circle from $0$ follows the law of $\frac 1{x^2}\cdot{\bf 1}_{(0,\infty)}\cdot \mA(dx)$. The measure $\mu^1_\infty$ rooted at $\infty$ is supported by straight lines, which is invariant under rotation or translation.
\end{Remark}

\begin{proof} [Proof of Theorem \ref{Thm-loop-measure}]
(i) It suffices to consider the case $z=0$ since $\mu^1_z$ can be expressed by $z+\mu^1_0$.
Let $\gamma_\tau(t)$, $-\infty\le t\le \tau$, be a whole-plane Loewner curve started from $0$ with driving function $e^{i\lambda_t}$, $-\infty<t\le \tau$. Note that $(\gamma_\tau)_{\tip}=\gamma_\tau(\tau)$. Let $g_t$ and $\til g_t$ be the corresponding Loewner maps and covering Loewner maps. Suppose $\gamma_\tau\in \Gamma(\ha\C;0;\infty)$.
Then $g_\tau:(\ha\C(\gamma_\tau;0);\infty,\gamma_\tau(\tau),0)\conf(\D^*;\infty,e^{i\lambda_\tau},e^{iq_\tau})$ for some $q_\tau\in(\lambda_\tau-2\pi,\lambda_\tau)$.
Recall that we have the chordal SLE$_\kappa$ measure $\mu^\#_{\ha\C(\gamma_\tau;0);\gamma_\tau(\tau)\to 0}$ and the two-sided radial SLE$_\kappa$ measure $\nu^\#_{\ha\C(\gamma_\tau;0);\gamma_\tau(\tau)\to w\to 0}$ for each $w\in \ha\C({\gamma_\tau};0)$. Since these measures are all determined by $\gamma_\tau$, we now write $\mu^\#_{\gamma_\tau}$ and $\nu^\#_{\gamma_\tau;w}$, respectively, for them.
We write $G_{\gamma_\tau}(w)$ for the Green's function $G_{\ha\C(\gamma_\tau;0);\gamma_\tau(\tau)\to 0}(w)$. Let $K$ be a compact subset of $\C\sem\{0\}$ such that $K\cap\gamma_\tau=\emptyset$.
From Proposition \ref{decomposition-Thm}, we have
\BGE \mu^\#_{\gamma_\tau}(d\gamma)\otimes {\cal M}_{\gamma\cap K}(dw)=\nu^\#_{\gamma_\tau;w}(d\gamma)\overleftarrow{\otimes} {\bf 1}_K G_{\gamma_\tau}\cdot \mA^2(dw).\label{decomposition-whole}\EDE

We now compute $G_{\gamma_\tau}(w)$ for $w\in\ha\C(\gamma_\tau;0)$.
Let $\phi(z)=i\frac{z-e^{i\lambda_{\tau}}}{z-e^{iq_\tau}}$. Then $\phi:(\D^*;e^{i\lambda_\tau},e^{iq_\tau})\conf(\HH;0,\infty)$.
Since $g_\tau:(\ha\C(\gamma_\tau;0);\gamma_\tau(\tau),0)\conf(\D^*;e^{i\lambda_\tau},e^{iq_\tau})$,
by (\ref{Green-H}) and (\ref{Green}), we get
\begin{align}
G_{\gamma_\tau}(w)&=|\phi'(g_\tau(w))|^{2-d}|g_\tau'(w)|^{2-d}G_{\HH}(\phi\circ g_\tau(w))\nonumber\\
&= \frac{\ha c|g_\tau'(w)|^{2-d}|e^{i\lambda_{\tau}}-e^{iq_\tau}|^{\frac 8\kappa-1}}{|g_\tau(w)-\lambda_\tau|^{\frac 8\kappa-1}|g_\tau(w)-e^{iq_\tau}|^{\frac 8\kappa-1}}\cdot \Big(\frac{|g_\tau(w)|^2-1}{2}\Big)^{\frac 8\kappa+\frac\kappa 8-2}
 \label{G-whole}
\end{align}
Let $R_w(\gamma_\tau)$ be as in Lemma \ref{RN-whole'}.
Let
\BGE Q(\gamma_\tau)=2^{\frac \kappa 8+\frac 8 \kappa -2} \ha c^{-1}|e^{i\lambda_{\tau}}-e^{iq_\tau}|^{1-\frac 8\kappa} e^{(d-2)\tau}.\label{R-gamma}\EDE
 From  the above formulas, we get
\BGE Q(\gamma_\tau)G_{\gamma_\tau}(w)=R_w({\gamma_\tau})G_{\C}(w).\label{QGRG}\EDE
From (\ref{decomposition-whole}) and (\ref{QGRG}), we get
\BGE Q(\gamma_\tau)\mu^\#_{\gamma_\tau }(d\gamma^\tau)\otimes {\cal M}_{\gamma\cap K}(dw)=R_w({\gamma_\tau}) \nu^\#_{\gamma_\tau ;w}(d\gamma^\tau)\overleftarrow{\otimes}  {\bf 1}_K G_{\C} \cdot \mA^2(dw).\label{decomposition-whole'}\EDE

Suppose that $\tau$ is a nontrivial stopping time. Recall that ${\cal K}_\tau$ is the killing map at time $\tau$. Define
$$ \Gamma_{\tau}=\{\gamma:\tau(\gamma)<T_0(\gamma),\K_\tau(\gamma) \in \Gamma(\C;0;\infty)\}.$$
We view both sides of (\ref{decomposition-whole'}) as kernels from $\gamma_\tau\in \Gamma(\ha\C;0;\infty)$ to the space of curve-point pairs.

Let $K$ be a fixed compact subset of $\C\sem\{0\}$, and $\Gamma_{\tau;K}=\Gamma_\tau\cap\{\gamma:K\subset \ha \C({\K_\tau(\gamma)};0)\}$.
Then the measure ${\cal K}_\tau({\bf 1}_{\Gamma_{\tau;K}}\cdot\nu^\#_{0\to \infty})(d\gamma_\tau)$ is supported by $\Gamma(\ha\C;0;\infty)$, on which $\mu^\#_{\gamma_\tau }$ and $\nu^\#_{\gamma_\tau ;w}$ are well defined if $w\in K$. Acting $ {\cal K}_\tau({\bf 1}_{\Gamma_{\tau;K}}\cdot\nu^\#_{0\to \infty}) (d \gamma_\tau)\otimes$ on the left of both sides of (\ref{decomposition-whole'}), we get two equal measures on the space of curve-curve-point triples $(\gamma_\tau,\gamma^\tau,w)$ such that $w\in\gamma^\tau$, and $\gamma_\tau\oplus\gamma^\tau$ can be defined. On the lefthand side, we get the measure
\begin{align*}
  & {\cal K}_{\tau;K}({\bf 1}_{\Gamma_{\tau;K}}\cdot\nu^\#_{0\to \infty}) (d\gamma_\tau)\otimes [ Q(\gamma_\tau)\mu^\#_{\gamma_\tau }(d\gamma^\tau)\otimes {\cal M}_{\gamma^\tau\cap K}(dw)]\\
  =&[Q   \cdot {\cal K}_\tau({\bf 1}_{\Gamma_{\tau;K}}\cdot\nu^\#_{0\to \infty})(d\gamma_\tau)\otimes \mu^\#_{\gamma_\tau }(d\gamma^\tau)]\otimes {\cal M}_{\gamma^\tau\cap K}(dw).
\end{align*}
On the righthand side, we get the measure
\begin{align*}
  &   {\cal K}_\tau({\bf 1}_{\Gamma_{\tau;K}}\cdot\nu^\#_{0\to \infty})(d\gamma_\tau)\otimes[R_w({\gamma_\tau}) \nu^\#_{\gamma_\tau ;w}(d\gamma^\tau)\overleftarrow{\otimes} {\bf 1}_K G_{\C} \cdot \mA^2(dw)] \\
  =& [R_w \cdot{\cal K}_\tau({\bf 1}_{\Gamma_{\tau;K}}\cdot\nu^\#_{0\to \infty})(d\gamma_\tau)\otimes  \nu^\#_{\gamma_\tau ;w}(d\gamma^\tau)]\overleftarrow{\otimes}  {\bf 1}_K G_{\C} \cdot \mA^2(dw)\\
  =& [ {\cal K}_\tau({\bf 1}_{\Gamma_{\tau;K}}\cdot\nu^\#_{0\to w})(d\gamma_\tau)\otimes  \nu^\#_{\gamma_\tau ;w}(d\gamma^\tau)]\overleftarrow{\otimes}  {\bf 1}_K G_{\C} \cdot \mA^2(dw),
\end{align*}
where in the last step we used Lemma \ref{RN-whole'}.

Applying the map $(\gamma_\tau,\gamma^\tau,w)\mapsto (\gamma_\tau\oplus \gamma^\tau,w)$ to the above two measures, and using the fact that ${\cal M}_{(\gamma_\tau\oplus \gamma^\tau)\cap K}={\cal M}_{\gamma^\tau\cap K}$ when $K\cap \gamma_\tau=\emptyset$, we get
\begin{align}
  &[Q  \cdot {\cal K}_\tau({\bf 1}_{\Gamma_{\tau;K}}\cdot\nu^\#_{0\to \infty})(d\gamma_\tau)\oplus \mu^\#_{\gamma_\tau }(d\gamma^\tau)](d\gamma)\otimes {\cal M}_{\gamma\cap K}(dw)\nonumber\\
  =& [ {\cal K}_\tau({\bf 1}_{\Gamma_{\tau;K}}\cdot\nu^\#_{0\to w})(d\gamma_\tau)\oplus  \nu^\#_{\gamma_\tau ;w}(d\gamma^\tau)]\overleftarrow{\otimes}  {\bf 1}_K G_{\C} \cdot \mA^2(dw)\nonumber\\
  =& {\bf 1}_{\Gamma_{\tau;K}}\cdot\nu^\#_{0\rt w}(d\gamma)\overleftarrow{\otimes}  {\bf 1}_K G_{\C} \cdot \mA^2(dw),
  \label{decomposition-whole-triple}
\end{align}
where in the last step we used   the CMP formula (\ref{CMP-whole}).

Define
\BGE \mu_{\tau;K}=Q \cdot {\cal K}_\tau({\bf 1}_{\Gamma_{\tau;K}}\cdot\nu^\#_{0\to \infty})(d\gamma_\tau)\oplus \mu^\#_{\gamma_\tau }(d\gamma^\tau) . \label{mu-tau-K}\EDE
Using (\ref{decomposition-whole-triple}), we get
  \BGE \mu_{\tau;K}(d\gamma)\otimes {\cal M}_{\gamma\cap K}(dw)
={\bf 1}_{\Gamma_{\tau;K}}\cdot\nu^\#_{0\rt w}(d\gamma)\overleftarrow{\otimes} {\bf 1}_K G_{\C} \cdot \mA^2(dw).\label{decomposition-whole-J}\EDE

The total mass of the righthand side of (\ref{decomposition-whole-J}) is bounded above by
$\int_K G_{\C}(z)\mA^2(dz)$, which is finite. So both sides of (\ref{decomposition-whole-J}) are finite measures. Thus, $\mu_{\tau;K}$-a.s., ${\Cont}_d({\cdot \cap K})<\infty$. By looking at the marginal measure of the first component (the curve), we find that
\BGE {\Cont}_d({\cdot \cap K})\cdot \mu_{\tau;K} = \int_K {\bf 1}_{\Gamma_{\tau;K}}\cdot \nu^\#_{0\rt w} G_{\C}(w) \mA^2(dw)=:\ha\mu_{K;\tau}.\label{defofhamuK}\EDE
Thus, $\ha\mu_{K;\tau}$ is supported by  $\Gamma_K:=\{\gamma: \lin{\Cont}_d(\gamma\cap K)>0\}$.
Define
\BGE \mu_{K;\tau} = \lin{\Cont}_d({\cdot\cap K})^{-1}\cdot\ha\mu_{K;\tau}.\label{mu-K-tau}\EDE
By (\ref{defofhamuK},\ref{mu-K-tau}), we get
\BGE \mu_{\tau;K}|_{\Gamma_K}=\mu_{K;\tau}. \label{mu-restr}\EDE
We now define
\BGE \mu_{\tau} =Q \cdot {\cal K}_\tau({\bf 1}_{\Gamma_\tau}\cdot\nu^\#_{0\to \infty})(d\gamma_\tau)\oplus \mu^\#_{\gamma_\tau }(d\gamma^\tau);  \label{mu-tau}\EDE
  \BGE \ha\mu_K = \int_K  \nu^\#_{0\rt w} G_{\C}(w) \mA^2(dw). \label{mu-K-ha}\EDE
Then $\mu_\tau$ is supported by $\Gamma_{\tau}$.
From (\ref{mu-tau-K},\ref{defofhamuK})  we get
\BGE \mu_\tau|_{\Gamma_{\tau;K}}=\mu_{\tau;K};\label{mu-tau-K-rest}\EDE
\BGE \ha\mu_K|_{\Gamma_{\tau;K}}=\ha\mu_{K;\tau}.\label{mu-K-tau-rest-ha}\EDE

For $n\in\N$, let $\tau_n$ be the first time that the curve reaches the circle $\{|z|=1/n\}$.
Then
\BGE \Gamma_{\tau_n;K} =\Gamma_{\tau_n},\quad \mbox{if }\dist(0,K)>1/n.\label{Ksubset}\EDE
Let $n>1/\dist(0,K)$. From (\ref{mu-K-ha}) we see that $\ha\mu_K$ is supported by $\Gamma_{\tau_n}$.
Define
\BGE \mu_K=\lin{\Cont}({\cdot\cap K})^{-1}\cdot\ha\mu_{K}.\label{mu-K}\EDE
Then 
for any nontrivial stopping time $\tau$,
\BGE  \mu_K |_{\Gamma_{\tau;K}}=  \mu_{K;\tau}.\label{mu-K-tau-rest}\EDE
Since $\ha\mu_K$ is supported by $\Gamma_{\tau_n}$,
from (\ref{mu-K-tau-rest-ha},\ref{Ksubset}) we see that $\ha\mu_{K;\tau_n}=\ha\mu_K $.
So we have $ \mu_K =  \mu_{K;\tau_n}$. 
Since $\mu_{\tau_n}$ is supported by $\Gamma_{\tau_n}$, from (\ref{mu-tau-K-rest},\ref{Ksubset}) we get $\mu_{\tau_n}=\mu_{\tau_n;K}$.
Combining these formulas with (\ref{mu-restr}), we get
\BGE \mu_{\tau_n}|_{\Gamma_K}=\mu_K.\label{mu-tau-Gamma-K}\EDE

Let $K_1\subset K_2$ be two compact subsets of $\C\sem\{0\}$. Let $n>1/\dist(0,K_2)$. Then (\ref{mu-tau-Gamma-K}) holds for $K=K_1$ or $K_2$. Since $\Gamma_{K_1}\subset \Gamma_{K_2}$, we get
$$\mu_{K_2}|_{\Gamma_{K_1}}=\mu_{K_1}.$$
So we may define a $\sigma$-finite measure $\mu^1_0$ supported by $\bigcup_n \Gamma_{\{1/n\le |w|\le n\}}=\bigcup_{K\subset \C\sem\{0\}} \Gamma_K $ such that
\BGE \mu^1_0|_{\Gamma_K}=\mu_K,\quad \mbox{for any compact }K\subset\C\sem\{0\}.\label{mu-rest-K}\EDE
By Lemma \ref{Mink-whole} and (\ref{mu-K-ha},\ref{mu-K}), each $\mu_K$ is supported by non-degenerate loops rooted at $0$ which possess Minkowski content measure that is parametrizable. So $\mu^1_0$ also satisfies these properties.

Let  $K\subset\C\sem\{0\}$ be compact, and  $\tau$ be a nontrivial stopping time. From (\ref{mu-restr},\ref{mu-tau-K-rest},\ref{mu-K-tau-rest},\ref{mu-rest-K}) we have
$$\mu^1_0|_{\Gamma_{\tau;K}\cap\Gamma_K}=\mu_K|_{\Gamma_{\tau;K}} =\mu_{K;\tau}=\mu_{\tau;K}|_{\Gamma_K}=\mu_\tau|_{\Gamma_{\tau;K}\cap\Gamma_K}.$$
Let $\Xi$ denote the set of closure of domains that lie in $\C\sem\{0\}$ whose boundary consists of a disjoint union of finitely many polygonal curves whose vertices have rational coordinates. Then $\Xi$ is countable. From the above displayed formula, we see that $\mu^1_0$ and $\mu_\tau$ agree on
$$\til\Gamma_\tau:=\bigcup_{K\in\Xi} (\Gamma_{\tau;K}\cap\Gamma_K)\subset\Gamma_\tau.$$

Given $\gamma_\tau$, by Lemmas \ref{Minkowski-SLE} and \ref{conformal-content}, $\mu^\#_{\gamma_\tau}=\mu^\#_{\ha\C(\gamma_\tau;0);(\gamma_\tau)_{\tip}\to 0}$ is supported by
$$\bigcup_{K\in\Xi,K\subset \ha\C(\gamma_\tau;0)} \{\gamma^\tau: \lin{\Cont}(\gamma^\tau\cap K)>0\}=\bigcup_{K\in\Xi} \{\gamma^\tau: \gamma_\tau\oplus \gamma^\tau\in\Gamma_K,K\subset \ha\C(\gamma_\tau;0)\}.$$
From (\ref{mu-tau}) we see that $\mu_\tau$ is supported by $\til\Gamma_\tau$.

Fix any $w\in\C\sem\{0\}$.
Suppose $\gamma$ has the law of $\nu^\#_{0\rt w}$. Let $T_w$ be the hitting time at $w$. On the event $\Gamma_\tau$, let $\gamma_\tau$ and $\gamma^\tau$ be the parts of $\gamma$ before $\tau$ and after $\tau$, respectively. From the CMP of two-sided whole-plane SLE$_\kappa$, conditional on $\gamma_\tau$ and $\Gamma_\tau$, if $\tau<T_w$, $\gamma^\tau$ is a two-sided radial SLE$_\kappa$ curve in  $\ha\C(\gamma_\tau;0)$; and if $T_w\le\tau<T_0$, then $\gamma^\tau$ is a chordal SLE$_\kappa$ curve in $\ha\C(\gamma_\tau;0)$. Following the argument in the last paragraph and using Lemmas \ref{Minkowski-SLE}, \ref{conformal-content} and \ref{Minkowski-radial}, we find that $\nu^\#_{0\rt w}|_{\Gamma_\tau}$ is supported by $\til\Gamma_\tau$. From (\ref{mu-K-ha},\ref{mu-K}) we know that $\mu_K|_{\Gamma_\tau}$ is supported by $\til\Gamma_\tau$ for every compact $K\subset\C\sem \{0\}$. Since $\mu^1_0$ is supported by $\bigcup_K \Gamma_K$, from (\ref{mu-rest-K}) we see that $\mu^1_0|_{\Gamma_\tau}$ is supported by $\til\Gamma_\tau$. Since $\mu^1_0|_{\Gamma_\tau}$ and $\mu_\tau$ agree on $\til\Gamma_\tau$, and are both supported by $\til\Gamma_\tau$, we get
\BGE \mu^1_0|_{\Gamma_\tau}=\mu_\tau.\label{mu-rest-tau}\EDE

Let  $K\subset\C\sem\{0\}$ be compact, and $n>1/\dist(0,K)$. Taking $\tau=\tau_n$ in (\ref{decomposition-whole-J}) and using (\ref{Ksubset}), we get
\BGE (\mu^1_0(d\gamma)\otimes {\cal M}_\gamma(dw))|_{\Gamma_{\tau_n}\times K}=(\nu^\#_{0\rt w}(d\gamma)\overleftarrow{\otimes} G_{\C}\cdot \mA^2(dw))|_{\Gamma_{\tau_n}\times K}.\label{replace}\EDE
From the CMP formula (\ref{CMP-whole}), we know that, for each $w\in K$,   $\nu^\#_{0\rt w}$ vanishes on $\{\tau_n<\infty\}\sem \Gamma_{\tau_n}$.
From (\ref{mu-K-ha},\ref{mu-K},\ref{mu-rest-K}), we see that $\mu^1_0$ also vanishes on $\{\tau_n<\infty\}\sem \Gamma_{\tau_n}$. Thus, (\ref{replace}) holds with $\Gamma_{\tau_n}$ replaced by $\{\tau_n<\infty\}$.
Since both $\mu^1_0(d\gamma)\otimes {\cal M}_\gamma(dw)$ and $\nu^\#_{0\rt w}(d\gamma)\overleftarrow{\otimes} G_{\C}\cdot \mA^2(dw)$ are supported by
$$ \bigcup_{n>m} (\{\gamma:\tau_n(\gamma)<\infty\}\times \{z:1/m\le |z|\le m\}),$$
we obtain (\ref{decomposition-whole-loop}) with $z=0$. By looking at the marginal measure in curves, we obtain (\ref{defofloop}) with $z=0$, which immediately implies the uniqueness of $\mu^1_0$. The reversibility of $\mu^1_0$ follows from (\ref{defofloop}) and the reversibility of $\nu^\#_{0\rt w}$.

(ii) It suffices to consider the case $z=0$. From (\ref{mu-tau},\ref{mu-rest-tau}) we see that $\K_\tau({\bf 1}_{\Gamma_\tau}\mu^1_0)=Q\cdot\K_\tau({\bf 1}_{\Gamma_\tau}\nu^\#_{0\to \infty})$, and
$$ \mu^1_0|_{\Gamma_\tau}=\K_\tau({\bf 1}_{\Gamma_\tau}\mu)(d\gamma_\tau)\oplus \mu^\#_{\ha\C(\gamma_\tau;0);(\gamma_\tau)_{\tip}\to 0}(d\gamma^\tau).$$
This formula is different from (\ref{CMP-chordal-loop}) because  $\Gamma_\tau$ is a subset of $\{\tau<T_0\}$. However, if $\tau=\tau_n$, then the measures on both sides vanish on $\{\tau_n<T_0\}\sem \Gamma_{\tau_n}$. So we can conclude that (\ref{CMP-chordal-loop}) holds for $\tau=\tau_n$. Now we consider a general nontrivial stopping time $\tau$. We have $\tau>\inf_n \tau_n$. Fix any $n\in\N$. Since (\ref{CMP-chordal-loop}) holds for $\tau_n$, we get
$$ \mu^1_0|_{\Gamma_{\tau_n}\cap\{\tau_n<\tau\}}=\K_{\tau_n}({\bf 1}_{\Gamma_{\tau_n}\cap\{\tau_n<\tau\}}\mu)(d\gamma_{\tau_n})\oplus \mu^\#_{\ha\C(\gamma_{\tau_n};0);(\gamma_{\tau_n})_{\tip}\to 0}(d\gamma^{\tau_n}).$$
Applying   the CMP formula (\ref{CMP-chordal}) to the chordal SLE$_\kappa$ measure $\mu^\#_{\ha\C(\gamma_{\tau_n};0);(\gamma_{\tau_n})_{\tip}\to 0}$ and the stopping time $\tau-\tau_n $ on the event $\{\tau_n<\tau\}$, with $T_0^{\tau_n}:=T_0-\tau_n$, we get
\begin{align*}
  & \mu^1_0|_{\{\tau_n<\tau<T_0\}}=(\mu|_{\Gamma_{\tau_n}\cap\{\tau_n<\tau\}})|_{\{\tau-\tau_n<T_0^{\tau_n}\}}\\
  =& \K_{\tau_n}({\bf 1}_{\Gamma_{\tau_n}\cap\{\tau_n<\tau\}}\mu^1_0)(d\gamma_{\tau_n})\oplus {\bf 1}_{\tau-\tau_n<T_0^{\tau_n}} \mu^\#_{\ha\C(\gamma_{\tau_n};0);(\gamma_{\tau_n})_{\tip}\to 0}(d\gamma^{\tau_n})\\
  =& \K_{\tau_n}({\bf 1}_{\Gamma_{\tau_n}\cap\{\tau_n<\tau\}}\mu^1_0)(d\gamma_{\tau_n})\oplus \K_{\tau-\tau_n}({\bf 1}_{\{\tau-\tau_n<T_0^{\tau_n}\}} \mu^\#_{\ha\C(\gamma_{\tau_n};0);(\gamma_{\tau_n})_{\tip}\to 0})(d\gamma^{\tau_n}_\tau)
  \\ &\oplus  \mu^\#_{\ha\C(\gamma_{\tau_n}\oplus \gamma^{\tau_n}_\tau;0);(\gamma_{\tau_n}\oplus \gamma^{\tau_n}_\tau)_{\tip}\to 0}(d\gamma^{\tau}).
\end{align*}
 Thus, we get
 $$\mu^1_0|_{\{\tau_n<\tau<T_0\}}=\K_{\tau}({\bf 1}_{\{\tau_n<\tau<T_0\}}\mu^1_0)(d\gamma_{\tau})\oplus   \mu^\#_{\ha\C(\gamma_{\tau };0);(\gamma_{\tau} )_{\tip}\to 0}(d\gamma^{\tau}).$$
Since $\{\tau<T_0\}=\bigcup_n \{\tau_n<\tau<T_0\}$, from the above formula we get (\ref{CMP-chordal-loop}) with $z=0$.

(iii) Fix $a\in\R$. Since ${\cal T}_a(\gamma)$ has the same Minkowski content as $\gamma$, it suffices to prove that the statement holds with $\mu^1_0$ replaced by $\ha\mu^1_0:=\lin\Cont\cdot \mu^1_0=\int_\C G_{\C}(w)\nu^\#_{0\rt w}\mA^2(dw)$. Now suppose $\gamma$ has the law of $\ha\mu^1_0$, and is parametrized by its Minkowski content measure with $\gamma(0)=0$.

Let $\theta$ be a random variable uniformly distributed on $(0,1)$ and independent of $\gamma$. Let  $\beta={\cal T}_{\theta\Cont(\gamma)}(\gamma)$. Then $\beta$ is also parametrized by its Minkowski content measure periodically with $\beta(0)=0$, and $\Cont(\beta)=\Cont(\gamma)$.  Since $\gamma$ is parametrized by its Minkowski content measure, by (i), the law of $(\gamma,\gamma(\theta\Cont(\gamma)))$ is $$
\ha\mu^1_0(d\gamma)\otimes {\cal M}_\gamma(dw)/\Cont(\gamma)=
\nu^\#_{0\rt w}(d\gamma)\overleftarrow{\otimes} G_{\C}(w)\mA^2(dw).$$
For every $w\in\C\sem\{0\}$, by the reversibility of two-sided whole-plane SLE, if $\til\gamma$ has the law of $\nu^\#_{0\rt w}$ and is parametrized by its Minkowski content measure such that $\til\gamma(0)=0$, then there a.s.\ exists a unique $s\in(0,\Cont(\til\gamma))$ such that $\til\gamma(s)=w$, and ${\cal T}_s(\til\gamma)$ has the law of $\nu^\#_{0\rt -w}$ with ${\cal T}_s(\til\gamma)(-s)=-w$. Since $G_{\C}(-w)=G_{\C}(w)$, we see that $(\beta,\beta(-\theta\Cont(\beta)))$ has the same law as $(\gamma,\gamma(\theta\Cont(\gamma)))$. This means that $\beta$ has the same law as $\gamma$, and is independent of $\theta$.  By periodicity, we have ${\cal T}_a(\gamma)={\cal T}_{a-\theta\Cont(\beta)} (\beta)={\cal T}_{\theta'\Cont(\beta)}(\beta)$,
where $\theta'\in[0,1)$ is such that $a/\Cont(\beta)-\theta -\theta'\in\Z$. Since $\theta$ is uniformly distributed on $(0,1)$ and independent of $\beta$, so is $\theta'$. From the argument above, ${\cal T}_a(\gamma)={\cal T}_{\theta'\Cont(\beta)}(\beta)$ has the same law as $\beta$, which in turn has the same law as $\gamma$. This finishes the proof.

(iv)  Applying the map $J\otimes J$ to both sides of (\ref{decomposition-whole-loop}) and using  Lemma \ref{conformal-content}, we get (\ref{decomposition-loop-infty}) and conclude that $\mu^1_\infty$ is supported by loops rooted at $\infty$, which possesses Minkowski content measure (in $\C$) that is parametrizable for the loop without $\infty$. Let $K=\lin S$. Then $K$ is a compact set. Computing the total mass of the measures on both sides of (\ref{decomposition-loop-infty}) restricted to $w\in K$, we get $\int \Cont(\gamma\cap K) \mu^1_\infty(d\gamma)=\mA^2(K)<\infty$. So we have $\mu^1_\infty$-a.s.\ $\lin\Cont(\gamma\cap S)\le \Cont(\gamma\cap K)<\infty$.

(v) Let $F(z)=az+b$ be a polynomial of degree $1$.  Applying $F\otimes F$ to both sides of (\ref{decomposition-loop-infty}), and using  Lemma \ref{conformal-content}, we get
$$F(\mu^1_\infty)(d\gamma)\otimes a^{-d} {\cal M}_\gamma(dw)=\nu^\#_{\infty\rt w}(d\gamma)\overleftarrow{\otimes}a^{-2} \mA^2(dw)=a^{-2}  \mu^1_\infty(d\gamma)\otimes {\cal M}_\gamma(dw).$$
Let $K$ be a compact subset of $\C$ and $\Gamma_K=\{\gamma:\lin{\Cont}(\gamma\cap K)>0\}$. Restricting both sides of the above formula to $w\in K$, and looking at the marginal measures of $\gamma$, we get $F(\mu^1_\infty)|_{\Gamma_K}=a^{d-2}\mu^1_\infty |_{\Gamma_K}$. Since $\mu^1_\infty$-a.s.\ $\Cont(\gamma)>0$, we see that $\mu^1_\infty$ is supported by $\bigcup_K \Gamma_K$, and so does $F(\mu^1_\infty)$. Thus, $F(\mu^1_\infty) =a^{d-2}\mu^1_\infty  $, i.e., (v) holds for $z=\infty$. Applying the inverse map $J$ and translations $w\to w+z$, we see that (v) holds for any $z\in\C$.

(vi)  By the translation invariance, the scaling property (v) and Lemma \ref{conformal-content}, it suffices to prove the first sentence of (vi) for $z=0$. We first show $\mu^1_0(\{\gamma:\diam(\gamma)>r\})<\infty$ for any $r>0$. For a compact set $S\subset\C$, we use $K_S$ to denote the interior hull generated by $S$, i.e., $\ha\C\sem K_S$ is the connected component of $\ha\C\sem S$ that contain $\infty$. Since $e^{\ccap(K_\gamma)}\asymp \diam(K_\gamma)=\diam(\gamma)$, from the scaling property, it suffices to show that $\mu^1_0(\{\gamma:\ccap(K_\gamma)>0\})<\infty$. We use $\gamma_t$ to denote the part of $\gamma$ up to $t$. Let $\tau_0$ be the first time that the curve returns to $0$ or disconnects $0$ from $\infty$. We have $\mu^1_0$-a.s.\ $K_\gamma=K_{\gamma_{\tau_0}}$ since from the CMP of $\mu^1_0$, the part of $\gamma$ after $\tau_0$ grows inside $K_{\gamma_{\tau_0}}$.
Let $\sigma_0$ denote the first $t$ such that $\ccap(K_{\gamma_t})=0$.
Then $\ccap(K_\gamma)>0$ is equivalent to $\sigma_0<\tau_0$. Applying (\ref{mu-tau},\ref{mu-rest-tau}) with $\tau=\tau_0\wedge \sigma_0$ and using that $\mu^1_0$-a.s.\ $\Gamma_\tau=\{\tau<\tau_0\}=\{\sigma_0<\tau_0\}$, we get
$\K_{\sigma_0}(\mu^1_0|_{\{\sigma_0<\tau_0\}})=Q(\gamma_{\sigma_0})\cdot \K_{\sigma_0}(\nu^\#_{0\to \infty})$.
Thus, $\mu^1_0(\{\sigma_0<\tau_0\})=\EE_{\nu^\#_{0\to \infty}}[Q(\gamma_{\sigma_0})]$. It remains to show that the expectation is finite. Suppose $\gamma$ follows the law of $\nu^\#_{0\to \infty}$, i.e., is a whole-plane SLE$_\kappa(2)$ curve from $0$ to $\infty$. Let $(e^{i\lambda_t};e^{iq_t})_{t\in\R}$ be the driving process for $\gamma$. Then $(X_t:=\lambda_t-q_t)_{t\in\R}$ is a stationary diffusion process that satisfies
(\ref{dXtau'}).
 By \cite[Equations (56), (62)]{Law-real}, the law of $X_0$ is absolutely continuous w.r.t.\ $\mA|_{(0,\pi)}$, and the density is proportional to $\sin_2(x)^{8/\kappa}$. By (\ref{R-gamma}) we get
$$\EE_{\nu^\#_{0\rt \infty}}[Q(\gamma_{\sigma_0})]=\frac{2^{d-2}}{\ha c}\cdot \frac{\int_0^{2\pi} \sin_2(x) dx}{\int_0^{2\pi} \sin_2(x)^{8/\kappa }dx}<\infty.$$

Next, we show that $\mu^1_0(\{\gamma:\lin{\Cont}(\gamma)>r\})<\infty$ for any $r>0$. From (\ref{decomposition-whole-loop}, we know that
$$\int \Cont(\gamma\cap \lin\D)\mu^1_0(d\gamma)=\int_{\lin\D} G_{\C}(w)\mA^2(dw)=\int_{\lin\D} |w|^{-2(2-d)}\mA^2(dw)<\infty.$$
Thus, $\mu^1_0(\{\gamma:\lin{\Cont}(\gamma\cap\lin\D)>r\})<\infty$ for any $r>0$. Since for curves started from $0$,
$$\{\gamma:\lin{\Cont}(\gamma)>r\}\subset \{\gamma:\lin{\Cont}(\gamma\cap\lin\D)>r\}\cup \{\gamma:\diam(\gamma)> 1\},$$
and $\mu^1_0(\{\gamma:\diam(\gamma)>1\})<\infty$, we get $\mu^1_0(\{\gamma:\lin{\Cont}(\gamma)>r\})<\infty$ for any $r>0$.

(vii) We may assume that $z=0$. Suppose $\mu'$ satisfies the assumption for $z=0$. Fix $r>s>0$, a compact set $K\subset \C$ with $\dist(0,K)>r$. Let $\tau_s$ and $\tau_r$ be the first time that the curve reaches $\{|z|=s\}$ and $\{|z|=r\}$, respectively. We use the notation in the proof of (i).
From the assumption, we have $\mu'(\Gamma_{\tau_s})<\infty$.
 Suppose $\gamma$ is parametrized by whole-plane capacity up to $\tau_r$.
Let $\ha \mu'_{K} =  \Cont(\cdot\cap K)\cdot \mu'$.
Using (\ref{CMP-chordal-loop}) and Proposition \ref{decomposition-Thm}  we get
$$\ha\mu_{K}'=\int_K \K_{\tau_s}(\mu'|_{\Gamma_{{\tau_s}}})(d\gamma_{\tau_s})\oplus G_{\gamma_{\tau_s}}(w) \cdot \nu^\#_{\gamma_{\tau_s};w}(d\gamma^{\tau_s}_{\tau_r}) \mA^2(dw).$$
Thus, the total mass of $\ha \mu'_{ K}$ equals $\int \int_K  G_{\gamma_{\tau_s}}(w) \mA^2(dw) \K_{\tau_s}(\mu'|_{\Gamma_{{\tau_s}}})(d\gamma_{\tau_s})$. From (\ref{G-whole}) we see that $G_{\gamma_{\tau_s}}(w)$ is uniformly bounded in both $\gamma_{\tau_s}$ and $w\in K$. Thus, from the finiteness of $\mu'|_{\Gamma_{\tau_s}}$ we can conclude that $\ha\mu'_{K}$ is a finite measure. Since the first arm of a two-sided radial SLE$_\kappa$ curve is a radial SLE$_\kappa(2)$ curve, using a martingale in \cite{SW}, we get \BGE\K_{{\tau_r}}(\nu^\#_{\gamma_{\tau_s};w}|_{\Gamma_{{\tau_r}}})(d\gamma^{\tau_s}_{\tau_r})=\frac{R_w({\gamma_{\tau_s}\oplus\gamma^{\tau_s}_{\tau_r}})}{R_w({\gamma_{\tau_s} })}\cdot \K_{{\tau_r}}(\nu^\#_{\gamma_{\tau_s};\infty})(d\gamma^{\tau_s}_{\tau_r}), \quad w\in K.\label{CMP-radial}\EDE
A simple way to see that this formula is correct without complicated computation is to apply  Lemma \ref{RN-whole'} to the times ${\tau_s}$ and $\tau_r$ and use the CMP for whole-plane SLE$_\kappa(2)$ measures $\nu^\#_{0\to w}$ and $\nu^\#_{0\to \infty}$. In fact, by doing that, we see that (\ref{CMP-radial}) at least holds for $\K_{\tau_s}(\nu^\#_{0\to\infty})$-a.s.\ every $\gamma_{\tau_s}$.
Using (\ref{QGRG}) and the above two displayed formulas, we get
\BGE \K_{\tau_r}(\ha\mu'_{K})=  \K_{\tau_s}(\mu'|_{\Gamma_{{\tau_s}}})(d\gamma_{\tau_s})\oplus  \int_K  G_{\gamma_{\tau_s}\oplus\gamma_{\tau_r}^{\tau_s}}(w)\mA^2(dw) \frac{Q(\gamma_{\tau_s}\oplus\gamma_{\tau_r}^{\tau_s})}{Q(\gamma_{\tau_s})} \cdot \K_{{\tau_r}}(\nu^\#_{\gamma_{\tau_s};\infty})(d\gamma^{\tau_s}_{\tau_r}).\label{hamutauKoplus}\EDE
Define a new measure $\nu'_{r;K}$ by
$$\nu'_{r;K} (d\gamma_{\tau_r})= \Big(\int_K  Q(\gamma_{\tau_r})G_{\gamma_{\tau_r}}(w)\mA^2(dw)  )\Big)^{-1}\cdot  \K_{\tau_r}(\ha\mu'_{K})(d\gamma_{\tau_r}).$$
Since $\ha\mu'_{K}$ is a finite measure, from  (\ref{G-whole},\ref{R-gamma}) we see that $\nu'_{r;K}$ is also finite.
From (\ref{hamutauKoplus}) we see that
$$ \K_{\tau_s}(\nu'_{r;K})(d\gamma_{\tau_s})=\frac 1{Q(\gamma_{\tau_s})} \cdot \K_{\tau_s}(\mu'|_{\Gamma_{{\tau_s}}})(d\gamma_{\tau_s});$$
$$\nu'_{r;K} =\K_{\tau_s}(\nu'_{r;K})(d\gamma_{\tau_s})\oplus \K_{{\tau_r}}(\nu^\#_{\gamma_{\tau_s};\infty})(d\gamma^{\tau_s}_{\tau_r}).$$
We observe that $\nu'_{r;K}$ satisfies the CMP for $\nu^\#_{0\rt \infty}$ up to $\tau_r$. Since $\nu'_{r;K}$ is supported by non-degenerate curves started from $0$, and is finite, we conclude that there is $c_{r;K}\in[0,\infty)$ such that $\nu'_{r;K}=c_{r;K}\K_{\tau_r}( \nu^\#_{0\rt \infty})=c_{r;K}\K_{\tau_r}( \nu^\#_{0\to \infty})$. By the definitions of $\nu'_{r;K}$ and $\ha\mu'_K$, we get
$$\K_{\tau_r}(\Cont(\cdot\cap K)\cdot \mu')=c_{r;K}\int_K  Q(\gamma_{\tau_r})G_{\gamma_{\tau_r}}(w)\mA^2(dw) \cdot \K_{\tau_r}( \nu^\#_{0\to \infty}).$$
Using (\ref{QGRG},\ref{mu-K-ha},\ref{mu-K},\ref{mu-rest-K}) and Lemma \ref{RN-whole'}, we get
$$\K_{\tau_r}(\Cont(\cdot\cap K)\cdot \mu')=c_{r;K} \int_K \K_{\tau_r}( \nu^\#_{0\to w}) G_{\C}(w)\mA^2(dw)$$
$$=c_{r;K} \K_{\tau_r}(\ha \mu_K)=c_{r;K}\K_{\tau_r}(\Cont(\cdot\cap K)\cdot \mu_K)=c_{r;K}\K_{\tau_r}(\Cont(\cdot\cap K)\cdot \mu^1_0).$$
Since the total mass of the measures on both sides do not depend on $r$, we see that $c_{r;K}$ depends only on $K$, and so write it as $c_K$.
Since both $\mu'$ and $\mu^1_0$ satisfy (\ref{CMP-chordal-loop}), from Proposition \ref{decomposition-Thm} we see that the expectation of $\Cont(\gamma\cap K)$ conditional on $\K_{\tau_r}(\gamma)$ w.r.t.\ either $\mu'$ or $\mu^1_0$ is equal to $\int_K G_{\K_{\tau_r}(\gamma)}(w)\mA^2(dw)$, which is positive and finite. So from the above displayed formula, we get
$$\K_{\tau_r} (\mu'|_{\Gamma_{\tau_r}})=c_K \K_{\tau_r} (\mu^1_0|_{\Gamma_{\tau_r}}).$$
Thus, $c_K$ also does not depend on $K$, and we may write it as $c$. Applying (\ref{CMP-chordal-loop}) again, we get
$\mu'|_{\Gamma_{\tau_r}}=c\mu^1_0|_{\Gamma_{\tau_r}}$.
Since both $\mu'$ and $\mu^1_0$ are supported by non-degenerate loops rooted at $0$, by letting $r,s\to 0^+$, we conclude that $\mu'=c\mu^1_0$.
\end{proof}

\begin{Remark}
We record the following fact for future references. From the proof of Theorem \ref{Thm-loop-measure} (i), we see that, if $\rho$ is any Jordan curve in $\C$ surrounding $0$, and $\tau_\rho$ is the hitting time at $\rho$, then
$ \K_{\tau_\rho}(\mu^1_0|_{\{\tau_\rho<\infty\}})=Q\cdot \K_{\tau_\rho}(\nu^\#_{0\to\infty })$,
and the Radon-Nikodym derivative $Q$ may be expressed by
$$ Q(\gamma_{\tau_\rho})=2^{d-2} \ha c^{-1}|\sin_2(\lambda_{\tau_\rho}-q_{\tau_\rho})|^{1-\frac 8\kappa} e^{(d-2)\tau_\rho}, $$
if $(e^{i\lambda_t};e^{iq_t})$ is the driving process for the whole-plane SLE$_\kappa(2)$ curve. In the proof, we only considered the case $\rho=\{|z|=r\}$, but the above formula holds for general $\rho$. Thus, $\mu^1_0|_{\{\tau_\rho<\infty\}}$ may be constructed by first weighting the law of a whole-plane SLE$_\kappa(2)$ curve stopped at $\tau_\rho$ by $Q$, and then continue with a chordal SLE$_\kappa$ curve from the tip to $0$ in the remaining domain.
\label{record}
\end{Remark}

\begin{Corollary}
Suppose that $\ha\gamma_0$ is a Minkowski content parametrization of a two-sided whole-plane SLE$_\kappa$ curve  from $\infty$ to $\infty$ passing through $0$ such that $\ha\gamma_0(0)=0$. Then $\ha\gamma_0$ is a self-similar process of index $\frac 1d$ defined on $\R$ with stationary increments. 	\label{sssi}
\end{Corollary}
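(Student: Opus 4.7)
The plan is to prove the two claims separately. Self-similarity follows essentially immediately from scaling covariance of $\nu^\#_{\infty \rt 0}$; stationary increments will come from a symmetry argument on a double Palm-type disintegration of the translation-invariant infinite measure $\mu^1_\infty$.

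For self-similarity, fix $\lambda>0$ and take $F(z)=\lambda z$. By M\"obius invariance of two-sided whole-plane SLE, $F(\nu^\#_{\infty\rt 0})=\nu^\#_{\infty\rt 0}$, so $\lambda\gamma \overset{d}{=} \gamma$ when $\gamma\sim\nu^\#_{\infty\rt 0}$. Since $\M_{\lambda\gamma}(A)=\lambda^d\,\M_\gamma(\lambda^{-1}A)$ by Lemma \ref{conformal-content}, the Minkowski-content parametrization of $\lambda\gamma$ anchored so that its value at time $0$ is $0$ is precisely $t\mapsto\lambda\,\ha\gamma_0(t/\lambda^d)$. Comparing distributions and setting $c=1/\lambda^d$ gives $\ha\gamma_0(ct)\overset{d}{=}c^{1/d}\ha\gamma_0(t)$ as processes on $\R$, which is self-similarity of index $1/d$.

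For stationary increments, I consider the $\sigma$-finite measure $\mathrm{Meas}:=\mu^1_\infty(d\gamma)\otimes\M_\gamma(dw_1)\otimes\M_\gamma(dw_2)$ on triples, which is manifestly symmetric under $w_1\leftrightarrow w_2$. Equation (\ref{decomposition-loop-infty}) applied to either marked point rewrites $\mathrm{Meas}$ as $\mA^2(dw_i)\otimes\nu^\#_{\infty\rt w_i}(d\gamma)\otimes\M_\gamma(dw_{3-i})$. Pushing $\mathrm{Meas}$ forward under the bijection $(\gamma,w_1,w_2)\mapsto(\gamma-w_1,w_2-w_1,w_1)$, using translation invariance of $\nu^\#$, and canceling the resulting Lebesgue factor in $w_1$ (legitimately by restricting $w_1$ to a bounded Borel set of positive measure and dividing), I obtain
\BGEN \nu^\#_{\infty\rt 0}(d\gamma')\otimes\M_{\gamma'}(dv)\;=\;\Phi_*\bigl(\nu^\#_{\infty\rt 0}(d\gamma'')\otimes\M_{\gamma''}(du)\bigr),\quad \Phi(\gamma'',u)=(\gamma''-u,-u).\EDEN
Now parametrize both $\gamma'$ and $\gamma''$ by Minkowski content anchored at $0$: on the left, $\gamma'$ has law $\ha\gamma_0$ and $v=\ha\gamma_0(s)$ with $s\sim\mA|_\R$ (since the Minkowski parametrization pushes $\M_{\gamma'}$ forward to Lebesgue on $\R$); on the right, $u=\ha\gamma_0''(t)$ with $t\sim\mA|_\R$, and reanchoring the translated curve $\gamma''-u$ so that its value at time $0$ is $0$ yields the parametrization $\ha\gamma_0''(\cdot+t)-\ha\gamma_0''(t)$, under which the identification forces $s=-t$. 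Testing against product functions $g(\gamma')h(s)$ and integrating in $s$ yields $\EE[g(\ha\gamma_0)]=\EE[g(\ha\gamma_0(\cdot+a)-\ha\gamma_0(a))]$ for a.e.\ $a\in\R$; path-continuity of $a\mapsto\ha\gamma_0(\cdot+a)-\ha\gamma_0(a)$ as a path-space-valued map then promotes the identity to all $a\in\R$, giving stationary increments.

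The main obstacle is the careful handling of the $\sigma$-finite infinite measures: the cancellation of the infinite Lebesgue factor in $w_1$ must be justified by localization to a bounded Borel set and division, and the bookkeeping of the reanchored Minkowski parametrizations in the identification step must be done cleanly. Once these routine but nontrivial manipulations are verified, the essence of the argument is the evident symmetry of $\mathrm{Meas}$ combined with the translation invariance of $\mu^1_\infty$ from Theorem \ref{Thm-loop-measure}(v).
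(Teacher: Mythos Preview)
Your argument is correct, and it takes a genuinely different route from the paper's.

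The paper first reduces, via the already-established self-similarity, to proving invariance of $\ha\nu^\#_{\infty\rt 0}$ under the single deterministic shift ${\cal T}_1:\ha\gamma\mapsto\ha\gamma(\cdot+1)-\ha\gamma(1)$. It then works with a \emph{single} marked point: it defines a time-shift-by-$1$ map ${\cal T}_\gamma$ directly on the unparametrized curve and observes that $\M_\gamma$ is ${\cal T}_\gamma$-invariant (because Lebesgue on $\R$ is translation-invariant and $\ha\gamma$ gives an isomorphism modulo zero). Combining this with the disintegration $\mu^1_\infty(d\gamma)\otimes\M_\gamma(dz)=\nu^\#_{\infty\rt z}(d\gamma)\overleftarrow\otimes\mA^2(dz)$ and two auxiliary maps ${\cal R}_\Gamma,{\cal R}_{\C}$ encoding translation invariance of $\nu^\#$ and of $\mA^2$, one computes that $\nu^\#_{\infty\rt 0}$ is invariant under $\gamma\mapsto\gamma-{\cal P}(\gamma)(1)$, which is exactly ${\cal T}_1$ after parametrizing.

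Your approach instead marks \emph{two} points, exploits the manifest $w_1\leftrightarrow w_2$ symmetry of $\mu^1_\infty(d\gamma)\otimes\M_\gamma(dw_1)\otimes\M_\gamma(dw_2)$, and after the change of variables and cancellation of the common $\mA^2$-factor obtains the involution-invariance $\nu^\#_{\infty\rt 0}(d\gamma)\otimes\M_\gamma(dv)=\Phi_*(\nu^\#_{\infty\rt 0}(d\gamma)\otimes\M_\gamma(du))$ with $\Phi(\gamma,u)=(\gamma-u,-u)$. In parametrized coordinates this yields $\ha\nu^\#_{\infty\rt 0}\otimes\mA=\Phi'_*(\ha\nu^\#_{\infty\rt 0}\otimes\mA)$ with $\Phi'(\ha\gamma,t)=(\ha\gamma(\cdot+t)-\ha\gamma(t),-t)$, hence stationarity for a.e.\ $a$, and then for all $a$ by continuity of $a\mapsto\EE[g(\ha\gamma_0(\cdot+a)-\ha\gamma_0(a))]$ for bounded continuous cylindrical $g$. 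This avoids invoking self-similarity in the stationarity argument at the price of the a.e.-to-everywhere upgrade; the paper's route avoids that upgrade at the price of an extra auxiliary map and the appeal to self-similarity. Both rely on the same key ingredients: the disintegration \eqref{decomposition-loop-infty}, translation invariance of $\nu^\#_{\infty\rt\cdot}$, and the isomorphism-modulo-zero between $(\R,\mA)$ and $(\gamma,\M_\gamma)$ from Remark~\ref{Remark-param}.
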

\begin{proof}
	We view $\nu^\#_{\infty\rt 0}$ as a measure on unparametrized curves. Let $\ha\nu^\#_{\infty\rt 0}$ denote the law of the random parametrized curve $\ha\gamma_0$ in the statement.
	The self-similarity of $\ha\gamma_0$ follows easily from the scaling invariance of $\nu^\#_{\infty\rt 0}$  and the scaling covariance of the Minkowski content measure (Proposition \ref{conformal-content} applied to a scaling map). Since the Minkowski contents of both arms of $\ha\gamma_0$ are positive, by the self-similarity, the definition interval of $\ha\gamma_0$ has to be $\R$.
	
	Now we prove that $\ha\gamma_0$ has stationary increments. Because of the self-similarity of $\ha\gamma_0$, it suffices to show that $\ha\nu^\#_{\infty\rt 0}$ is invariant under the map ${\cal T}_1:\ha\gamma\mapsto \ha\gamma(\cdot+1)-\ha\gamma(1)$.
	
	Let $\Gamma$ denote the space of unparametrized  curves $\gamma$, which possesses Minkowski content measure that is parametrizable for $\gamma$, such that the definition domain for any Minkowski content parametrization of $\gamma$ is $\R$.
	For each $\gamma\in \Gamma$, define ${\cal T}_\gamma:\gamma\to\gamma$ such that if $\ha\gamma$ is a Minkowski content parametrization of $\gamma$, then for $z\in\gamma$, ${\cal T}_\gamma(z)=\ha\gamma(\tau_z(\ha\gamma)+1)$, where $\tau_z(\ha\gamma)$ is the first time that $\ha\gamma$ reaches $z$. Note that the definition does not depend on the choice of $\ha\gamma$. Since $\ha\gamma$ induces an isomorphism modulo zero between $(\R,\mA)$ and $(\gamma,{\cal M}_\gamma)$ (Remark \ref{Remark-param}), and $\mA$ is invariant under translation, we see that ${\cal M}_\gamma$ is invariant under ${\cal T}_\gamma$. Thus, $\mu^1_\infty(d\gamma)\otimes {\cal M}_\gamma(dz)$ is invariant under the map ${\cal T}_*:(\gamma,z)\mapsto (\gamma,{\cal T}_\gamma(z))$. By Theorem \ref{Thm-loop-measure} (iv), $\nu^\#_{\infty\rt z}(d\gamma)\otimes \mA^2(dz)$ is also invariant under ${\cal T}_*$.
			
	Define the map ${\cal R}_\Gamma(\gamma,z)=(z+\gamma,z)$ on  $\Gamma\times\C$. Since $\nu^\#_{\infty\rt z}=z+\nu^\#_{\infty\rt 0}$, we have $\nu^\#_{\infty\rt z}(d\gamma)\otimes \mA^2(dz)={\cal R}_\Gamma(\nu^\#_{\infty\rt 0}\otimes \mA^2)$. Thus, $\nu^\#_{\infty\rt 0}\otimes \mA^2$ is invariant under the map ${\cal R}_\Gamma^{-1}\circ {\cal T}_*\circ {\cal R}_\Gamma$.
	
	Let $\Gamma_0$ be the set of $\gamma\in\Gamma$ such that $0\in\gamma$ and $0$ is not a double point of $\gamma$. By scaling invariance, $\mu^\#_{\infty\rt 0}$ is supported by $\Gamma_0$. For every $\gamma\in\Gamma_0$, there is a unique Minkowski content parametrization of $\gamma$, denoted by ${\cal P}(\gamma)$ such that ${\cal P}(\gamma)(0)=0$. Then $\ha\nu^\#_{\infty\rt 0}={\cal P}(\nu^\#_{\infty\rt 0})$.
	Define  ${\cal R}_{\C}(\gamma,z)=(\gamma,z-{\cal P}(\gamma)(1))$ on $\Gamma_0\times \C$. By the translation invariance of $\mA^2$, $\nu^\#_{\infty\rt 0}\otimes \mA^2$ is also invariant under ${\cal R}_{\C}$. Thus, $\nu^\#_{\infty\rt 0}\otimes \mA^2$ is invariant under ${\cal R}_\Gamma^{-1}\circ {\cal T}_*\circ {\cal R}_\Gamma\circ{\cal R}_{\C}$.
	
	 Let $\gamma\in\Gamma_0$ and $z\in\ C$. Then $z$ is not a double point of $z+\gamma$, and $z+{\cal P}(\gamma)$ is a Minkowski content parametrization of $z+\gamma$ such that $z+{\cal P}(\gamma)(0)=z$. Thus,
	$$ {\cal T}_*\circ {\cal R}_\Gamma(\gamma,z)={\cal T}_*(z+\gamma,z)=(z+\gamma,{\cal T}_{z+\gamma}(z))=(z+\gamma,z+{\cal P}(\gamma)(1)).$$
	So we have
	$ {\cal R}_\Gamma^{-1}\circ {\cal T}_{\cal P}\circ {\cal R}_\Gamma\circ{\cal R}_{\C}(\gamma,z)=(\gamma-{\cal P}(\gamma)(1),z)$. 
	Therefore, $\nu^\#_{\infty\rt 0}$ is invariant under  $\gamma\mapsto \gamma-{\cal P}(\gamma)(1)$. So for $\nu^\#_{\infty\rt 0}$-a.s.\ $\gamma$, $ \gamma-{\cal P}(\gamma)(1)\in\Gamma_0$. Note that when $ \gamma-{\cal P}(\gamma)(1)\in\Gamma_0$, with $\ha\gamma:={\cal P}(\gamma)$, ${\cal T}_1(\ha\gamma)=\ha\gamma(\cdot+1)-\ha\gamma(1)$ is the Minkowski content parametrization of $\gamma-\ha\gamma(1)$ that satisfies ${\cal T}_1(\ha\gamma)(0)=0$, which implies that ${\cal P}(\gamma-{\cal P}(\gamma)(1))={\cal T}_1({\cal P}(\gamma))$. Since $\nu^\#_{\infty\rt 0}$ is invariant under  $\gamma\mapsto \gamma-{\cal P}(\gamma)(1)$, we get that $\ha\nu^\#_{\infty\rt 0}={\cal P}(\nu^\#_{\infty\rt 0})$ is invariant under   ${\cal T}_1$, as desired.
\end{proof}

\begin{Remark}
  In the subsequent paper \cite{Holder}, it is proved that the $\gamma$ in Corollary \ref{sssi} is locally $\alpha$-H\"older continuous for any $\alpha<1/d$, and for any deterministic closed $A\subset\R$, $\dim_H(\gamma(A))=d\cdot\dim_H(A)$, where $\dim_H$ stands for Hausdorff dimension. 
\end{Remark}

\begin{Remark}
Corollary \ref{sssi} also holds for $\kappa\ge 8$, if we replace  the two-sided SLE$_\kappa$ curve from $\infty$ to $\infty$ passing through $0$  with the SLE$_\kappa$ loop rooted at $\infty$ (with law $\mu^\#_\infty$) as described in Remark \ref{>8}, and let $d=2$ so that the ($d$-dimensional) Minkowski content agrees with the Lebesuge measure $\mA^2$. This is \cite[Lemma 2.3]{mating}. We now  give an alternative proof by modifying the above proof. The self-similarity is obvious. For the stationarity of increments, we define $\Gamma$ to be the space of space-filling curves from $\infty$ to $\infty$ that is parametrizable by $\mA^2$, and define ${\cal T}_\gamma:\C\to\C$ for each $\gamma\in\Gamma$ as in the above proof. The same argument shows that $\mA^2$ is invariant under ${\cal T}_\gamma$. Thus, $\mu^\#_\infty\otimes \mA^2$ is invariant under ${\cal T}_*:(\gamma,z)\mapsto (\gamma,{\cal T}_\gamma(z))$.
Since $\mu^\#_\infty$ is invariant under translation, $\mu^\#_\infty\otimes \mA^2$ is also invariant under ${\cal R}_\Gamma:(z,\gamma)\mapsto (z+\gamma,z)$. Define $\Gamma_0$, ${\cal P}$, and ${\cal R}_{\C}$ as in the above proof. By the scaling invariance, $\mu^\#_\infty$ is supported by $\Gamma_0$. By translation invariance of $\mA^2$, $\mu^\#_\infty\otimes \mA^2$ is also invariant under ${\cal R}_{\C}$. Thus, $\mu^\#_\infty\otimes \mA^2$ is invariant under the composition ${\cal R}_\Gamma^{-1}\circ {\cal T}_{\cal P}\circ {\cal R}_\Gamma\circ{\cal R}_{\C}:(\gamma,z)\mapsto (\gamma-{\cal P}(\gamma)(1),z)$. So $\mu^\#_\infty$ is invariant under $\gamma\mapsto \gamma-{\cal P}(\gamma)(1)$. When $\gamma-{\cal P}(\gamma)(1)\in\Gamma_0$, we have ${\cal P}(\gamma-{\cal P}(\gamma)(1))={\cal T}_1({\cal P}(\gamma))$. Thus,
$\ha\mu^\#_\infty:={\cal P}(\mu^\#_\infty)$ is invariant under  ${\cal T}_1$. So the increments are stationary.
\end{Remark}

\begin{proof} [Proof of Theorem \ref{Thm-loop-measure-unrooted}]
(i) From (\ref{defofloop},\ref{loop-unrooted}) we see that $\mu^0=\Cont(\cdot)^{-1} \cdot\int_{\C} \mu^1_z \mA^2(dz)$ and  satisfies reversibility.
Integrating (\ref{decomposition-whole-loop}) against the measure $\mA^2(dz)$ and using the above formula and the definition of $\mu^1_z$, we get
$$ \Cont(\gamma)\cdot \mu^0(d\gamma)\otimes {\cal M}_\gamma(dw)=\Cont(\gamma)\cdot \mu^1_{w}(d\gamma)\overleftarrow{\otimes} G_{\C}\cdot \mA^2(dw),$$
which immediately implies (\ref{decomposition-unrooted}) since both sides are supported by loops with positive Minkowski content. Combining (\ref{decomposition-unrooted}) with (\ref{decomposition-whole-loop}), we get (\ref{decomposition-unrooted2}).

(ii) Let $F$ be a M\"obius transformation. Applying the map $F\otimes F$ to both sides of (\ref{decomposition-unrooted}), we get two equal measures. On the left, using  Lemma \ref{conformal-content}, we get
$$F(\mu^0)(d\gamma)\otimes F({\cal M}_{F^{-1}(\gamma)})(dz)=F(\mu^0)(d\gamma)\otimes|F'(F^{-1}(z))|^{-d}\cdot {\cal M}_{\gamma}(dz).$$
On the right, using Theorem \ref{Thm-loop-measure} (iv) and (\ref{decomposition-unrooted}), we get
$$F(\mu^1_{F^{-1}(z)})(d\gamma)\overleftarrow{\otimes} F(\mA^2)(dz)=|F'(F^{-1}(z))|^{2-d}\mu^1_z\overleftarrow{\otimes}|F'(F^{-1}(z))|^{-2}\cdot \mA^2(dz)$$
$$=|F'(F^{-1}(z))|^{-d}\cdot(\mu^1_z\overleftarrow{\otimes} \mA^2(dz))=\mu^0(d\gamma)\otimes |F'(F^{-1}(z))|^{-d}\cdot {\cal M}_{\gamma}(dz).$$
Since  both $\mu^0$ and $F(\mu^0)$ are supported by loops with positive Minkowski content, by looking at the marginal measures in loops, we get $F(\mu^0)=\mu^0$.
\end{proof}

\section{SLE Loop Measures in Riemann Surfaces}\label{Section-S}
First, we use Brownian loop measure (c.f.\ \cite{loop}), the approach used in \cite{Law-mult}, and the normalized Brownian loop measure introduced in \cite{normalize} to define SLE loops in subdomains of $\ha\C$.
We are going to prove the following theorem.

\begin{Theorem} [Loops in a subdomain of $\ha\C$]
  Let $\mu^1_z$ and $\mu^0$ be as in Theorems \ref{Thm-loop-measure} and \ref{Thm-loop-measure-unrooted}. Let $D$ be a subdomain of $\ha\C$.  For $z\in D$, define
  $$\mu^1_{D;z}={\bf 1}_{\{\cdot\subset D\}}e^{\cc\Lambda^*(\cdot, D^c)}\cdot \mu^1_z, \quad
  \mu^0_D={\bf 1}_{\{\cdot\subset D\}}e^{\cc\Lambda^*(\cdot,D^c)}\cdot \mu^0.$$
  Then $\mu^1_{D;z}$ and $\mu^0_D$ satisfy the following conformal covariance and invariance, respectively.
 If $W$ maps a domain $U\subset\ha\C$ conformally onto a domain $V\subset\ha\C$, then
  \BGE W(\mu^1_{U;z})=|W'(z)|^{2-d} \mu^1_{V;W(z)},\quad \forall z\in U\sem\{\infty,W^{-1}(\infty)\};\label{mu1Uz}\EDE
  \BGE W(\mu^0_{U})=\mu^0_{V}.\label{mu0U}\EDE
 \label{Thm-subdomain}
\end{Theorem}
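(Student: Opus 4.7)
The plan is to establish (\ref{mu1Uz}) first, using a Möbius reduction followed by a CMP-plus-restriction-property argument, and then derive (\ref{mu0U}) by integrating against Lebesgue measure with the Minkowski-content scaling of Lemma \ref{conformal-content}. For the Möbius case I would first extend Theorem \ref{Thm-loop-measure}(v) from Möbius transformations fixing $z$ to all Möbius $F$, getting $F(\mu^1_z)=|F'(z)|^{2-d}\mu^1_{F(z)}$; combined with compositions, $z\mapsto\infty$ normalizations from part (iv), and the known Möbius invariance of $\Lambda^*$, this handles the Möbius situation directly. Using such Möbius moves I can further reduce the general conformal case to $U,V\subset\C$ with $z,W(z)\ne\infty$.

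For a general conformal $W:U\conf V$, I would prove (\ref{mu1Uz}) by showing that both sides give the same measure when we test them against the CMP decomposition of Theorem \ref{Thm-loop-measure}(ii). Pick a nontrivial stopping time $\tau$ for the loop $\gamma$; CMP splits $\mu^1_z|_{\{\tau<T_z\}}$ as $\K_\tau(\mu^1_z|_{\{\tau<T_z\}})(d\gamma_\tau)\oplus \mu^\#_{\ha\C(\gamma_\tau;z);(\gamma_\tau)_{\tip}\to z}(d\gamma^\tau)$, and likewise for $\mu^1_{W(z)}$. After intersecting with $\{\gamma\subset U\}$ resp.\ $\{\gamma\subset V\}$, the chordal tail is a chordal SLE$_\kappa$ curve restricted to stay in $U\cap\ha\C(\gamma_\tau;z)$ resp.\ $V\cap W(\ha\C(\gamma_\tau;z))$. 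The generalized restriction property for chordal SLE (the appendix extends this from $\kappa\in(0,4]$ to $\kappa\in(0,8)$) expresses such a restricted chordal SLE as a chordal SLE in the smaller simply connected domain weighted by $\exp(\cc\,\mu^{\lloop}(\mathcal{L}(\cdot,\text{removed piece})))$, and conformal invariance of chordal SLE identifies the $W$-image of the LHS tail with the RHS tail. Using (\ref{normalized-Brownian-equality}) repeatedly, the Brownian loop weights on the two tails differ by exactly $\cc[\Lambda^*(\gamma,U^c)-\Lambda^*(W(\gamma),V^c)]$ minus a contribution localized near the initial segment $\gamma_\tau$.

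The remaining piece is to match the weight on the initial segment $\gamma_\tau$ with the $|W'(z)|^{2-d}$ factor plus the residual Brownian-loop discrepancy near $z$. Here I would use that the first arm of $\mu^1_z$ (before reaching $\infty$ or returning to $z$) is essentially a whole-plane SLE$_\kappa(2)$ curve from $z$ (see Remark \ref{record}), so the conformal distortion formulas from Section \ref{whole-kappa-rho} apply: specifically, the martingale $(M_t)$ from (\ref{Nt'}),(\ref{Mt'}) gives the Radon-Nikodym derivative of $W_*\K_\tau(\nu^\#_{z\to\infty}|_{\gamma_\tau\subset U})$ with respect to $\K_{\tau_*}(\nu^\#_{W(z)\to\infty}|_{\beta\subset V})$ up to Möbius-image renormalization, and Lemma \ref{lemma-loop} turns the integrated Schwarzian derivative in that derivative into exactly the missing $\exp(\cc[\Lambda^*(\gamma_\tau,U^c)-\Lambda^*(W(\gamma_\tau),V^c)])$ contribution; the algebraic $|W'(z)|^{2-d}$ appears from (\ref{u-infty'}) combined with the $Q$-factor in Remark \ref{record}. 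Sending $\tau\to 0$ along e.g.\ hitting times of small circles around $z$, and using $\sigma$-finiteness of $\mu^1_z$ on $\Gamma_{\tau_n}$, one concludes (\ref{mu1Uz}). The main obstacle here is this limiting identification of the two weightings on the short initial segment: Lemma \ref{lemma-loop} was stated for whole-plane SLE$_\kappa(2)$ with fixed target $\infty$, and one must check that the same identity applies uniformly along the CMP decomposition and survives the passage to $\tau\to 0$.

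Finally, (\ref{mu0U}) follows by integration. By Theorem \ref{Thm-loop-measure-unrooted}(i), $\mu^0\otimes\mathcal{M}_\gamma=\mu^1_z\overleftarrow{\otimes}\mA^2(dz)$, so $\mu^0_U\otimes\mathcal{M}_\gamma$ restricted to $\{\gamma\subset U\}$ equals $\mu^1_{U;z}\overleftarrow{\otimes}\mA^2(dz)$ on the same set. Applying $W\otimes W$ and using (\ref{mu1Uz}) on the right and Lemma \ref{conformal-content} on the left gives
\begin{equation*}
W(\mu^0_U)(d\gamma)\otimes|W'(W^{-1}(z))|^{-d}\mathcal{M}_\gamma(dz)=|W'(W^{-1}(z))|^{2-d}\mu^1_{V;z}\overleftarrow{\otimes}|W'(W^{-1}(z))|^{-2}\mA^2(dz),
\end{equation*}
and the $|W'|^{-d}$ factors cancel, leaving $W(\mu^0_U)\otimes \mathcal{M}_\gamma=\mu^1_{V;z}\overleftarrow{\otimes}\mA^2=\mu^0_V\otimes \mathcal{M}_\gamma$; reading off the loop marginal yields (\ref{mu0U}).
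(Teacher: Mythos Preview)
Your proposal is essentially the paper's approach: CMP splitting at a hitting time, Remark~\ref{record} and Lemma~\ref{whole-conformal'} for the initial whole-plane SLE$_\kappa(2)$ arm, Lemma~\ref{lemma-loop} to convert the Schwarzian integral into the $\Lambda^*$ discrepancy, and the generalized restriction covariance (Lemma~\ref{lem-mult}) for the chordal tail; your derivation of (\ref{mu0U}) from (\ref{mu1Uz}) is exactly the paper's. Two minor points: the M\"obius reduction is unnecessary---the paper simply normalizes to $z=W(z)=0$---and the ``obstacle'' you flag does not arise, because the paper does not take a $\tau\to 0$ limit but obtains an \emph{exact} identity $W(\mu^1_{U;0}|_{\{\cdot\cap\rho\ne\emptyset\}})=|W'(0)|^{2-d}\mu^1_{V;0}|_{\{\cdot\cap W(\rho)\ne\emptyset\}}$ at each fixed Jordan curve $\rho$ (so shrinking $\rho$ is just a union over events, with no uniformity needed).
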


 Using (\ref{normalized-Brownian-equality}), we easily get the following generalized restriction property:  if $D_1\subset D_2$ are nonpolar domains, and $z\in D_1$, then
   $$\mu^1_{D_1;z}={\bf 1}_{\{\cdot\subset D_1\}}e^{\cc\mu^{\lloop}{\cal L}_{D_2}(\cdot,D_2\sem D_1)}\cdot \mu^1_{D_2;z};$$
  \BGE \mu^0_{D_1}={\bf 1}_{\{\cdot\subset D\}}e^{\cc\mu^{\lloop} {\cal L}_{D_2}(\cdot,D_2\sem D_1)}\cdot \mu^0_{D_1}.\label{G-restr}\EDE

 Now we show how Theorem \ref{Thm-subdomain} can be used to define unrooted SLE$_\kappa$ loop measure in some Riemann surfaces, such that the loop measures satisfy the generalized restriction property and conformal invariance. Let $S$ be a Riemann surface. The Brownian loop measure on $S$ was defined in \cite{Wer-loop}, which satisfies conformal invariance and the restriction property. We use $\mu^{\lloop}_S$ to denote this measure. We say that $S$ is of type I if (\ref{finite}) holds  for disjoint closed subsets $V_1,V_2$ of $S$, one of which is compact.

The definition of unrooted SLE$_\kappa$ loop on a type I Riemann surface $S$ is as follows. Let ${\cal S}$ denote the set of subdomains of $S$, which are conformally equivalent to some subdomain of $\ha\C$. For every $D\in\cal S$, we may find $E\subset \ha\C$ and   $f:E\conf D$. Then we define $\mu^0_D=f(\mu^0_E)$. From Theorem \ref{Thm-subdomain}, the value of $\mu^0_D$ does not depend on the choices of $E$ and $f$. Moreover, from (\ref{G-restr})  we get the generalized restriction property
\BGE \mu^0_{D_1}={\bf 1}_{\{\cdot\subset D_1\}} e^{\cc\mu_S^{\lloop}({\cal L}_{D_2} (\cdot, D_2\sem D_1)) }\cdot \mu^0_{D_2},\quad \forall D_1\subset D_2\in\cal S.\label{consistencty}\EDE
Using (\ref{consistencty}), we may define a measure $\mu^0_S$ on the space of (unrooted) loops in $S$, which is supported by the union of $\{\cdot\subset D\}$ over $D\in\cal S$, such that
\BGE \mu^0_D={\bf 1}_{\{\cdot\subset D\}} e^{\cc \mu_S^{\lloop}({\cal L}_S(\cdot, S\sem D))}\cdot \mu^0_S,\quad \forall D\in\cal S.\label{mu0S}\EDE
In fact,  (\ref{mu0S}) requires that $\mu^0_S|_{\{\cdot\subset D\}}=e^{-\cc \mu_S^{\lloop}({\cal L}_S(\cdot, S\sem D))}\mu^0_{D}$, where we use $\mu_S^{\lloop}({\cal L}_S(\cdot, S\sem D))<\infty$.
Let $\mu^S_D$ denote the measure on the right hand side. From (\ref{consistencty}) and the fact that ${\cal L}_S(\cdot, S\sem D_1)$ is the disjoint union of ${\cal L}_S(\cdot, S\sem D_2)$ and ${\cal L}_{D_2} (\cdot, D_2\sem D_1)$, we get the consistency criterion: $\mu^S_{D_1}=\mu^S_{D_2}|_{\{\cdot\subset D_1\}}$ if $D_1\subset D_2\in\cal S$. Thus, $\mu^0_S$ exists and is unique. We call $\mu^0_S$ the unrooted SLE$_\kappa$ loop measure in $S$. It clearly satisfies the conformal invariance and the generalized restriction property.

 We say that a Riemann surface $S$ is of type II if  (\ref{finite}) does not hold, but the normalization method in \cite{normalize} works. This means that, for any nonpolar closed subset $K$ of $S$, $S\sem K$ is of type I, and
 the limit $\Lambda^*_S(V_1,V_2)$ in (\ref{normalized-Brownian-S}) converges for disjoint closed subsets $V_1,V_2$ of $S$, one of which is compact, and does not depend on the choice of $z_0\in S$. We may also define unrooted SLE$_\kappa$ loop on a type II Riemann surface. The above approach still works except that we now use $\Lambda^*_S(\cdot,S\sem D)$ to replace the $\mu_S^{\lloop}({\cal L}_S(\cdot, S\sem D))$ in (\ref{mu0S}).

We expect that (\cite{future})  every subsurface $D$ of a compact Riemann surface $S$ is of type I or type II depending on whether $S\sem D$ can be reached by a Brownian motion on $S$. Therefore, unrooted SLE$_\kappa$ loop measure can be defined on any Riemann surface that can be embedded into a compact Riemann surface.

\begin{Remark}
there may be other ways to define SLE loops on Riemann surfaces, such as using Werner's SLE$_{8/3}$ loop measure in place of the normalized or unnormalized Brownian loop measure. St\'ephane Benoist dis some work on classifying all possible definitions of conformally invariant loop measures (\cite{Benoist}).
\end{Remark}


\begin{Remark}
If $\kappa=8/3$, we have the strong restriction property: $\mu^0_{S'}= \mu^0_{S}|_{\{\cdot\subset S'\}}$ because $\cc=0$. This measure is supported by simple loops, and so agrees with the loop measure constructed by Werner in \cite{Wer-loop} up to a positive multiplicative constant. Since $\cc=0$ when $\kappa=6$, the SLE$_6$ unrooted loop measure also satisfies the strong restriction property.
\end{Remark}

\begin{Remark}
If $\kappa=2$, and $D$ is a doubly connected domain, then $\mu^0_D$ restricted to the family $\Gamma$ of the loops in $D$ that separate the two boundary components of $D$  is a finite measure. The normalized probability measure $\mu^\#_D:=\mu^0_D|_\Gamma/|\mu^0_D|_\Gamma|$ should agree with the measure constructed in \cite{KK} as the scaling limit of the unicycle of a conditional uniform CRST.
\end{Remark}

\begin{Remark}
  If some assumption holds, we also have the CMP of rooted SLE$_\kappa$ loop measure in a subdomain of $\ha\C$. We use the measure $\mu^D_{U;a\to b}$ defined in (\ref{chordal-BL}). If it is a finite measure, then we may normalize it to get a probability measure, which is denoted by $\mu^\#_{U;a\to b}$. This is the case, e.g., if $\kappa \in(0,8/3]\cup [6,8)$.  From Proposition \ref{prop-mult} we know that $\mu^\#_{U;a\to b}$ satisfies conformal invariance. From the CMP for the rooted  SLE loop measure in $\ha\C$, we get the following CMP:
$$\K_\tau(\mu ^1_{U;z}|_{\{\tau<T_z\}})(d\gamma_\tau)\oplus \mu^\#_{\ha\C(\gamma_\tau;z)\cap U;(\gamma_\tau)_{\tip}\to z}(d\gamma^\tau)=\mu^1_{U;z}|_{\{\tau<T_z\}},$$
if $\tau$ is a nontrivial stopping time, and if $\mu^\#_{\ha\C(\gamma_\tau;z)\cap U;(\gamma_\tau)_{\tip}\to z}$ is well defined.
\end{Remark}

\begin{proof} [Proof of Theorem \ref{Thm-subdomain}.]
We first prove (\ref{mu1Uz}). We may assume that $z=0$ and $W(0)=0$.
Let $\rho$ be a Jordan curve in $\C$ that separates $0$ from $(\ha\C\sem U)\cup\{W^{-1}(\infty)\}$. Then $W(\rho)$ is a Jordan curve in $\C$ that separates $0$ from $(\ha\C\sem V)\cup\{W(\infty)\}$. Let $\tau_\rho$ and $\tau_{W(\rho)}$ be the hitting times at $\rho$ and $W(\rho)$, respectively.  From Remark \ref{record}, we see that
$$\K_{\tau_\rho}(\mu^1_0|_{\{\cdot\cap \rho\ne\emptyset\}})=Q\cdot \K_{\tau_\rho}(\nu^\#_{0\to\infty});$$
$$\K_{\tau_{W(\rho)}}(\mu^1_0|_{\{\cdot\cap W(\rho)\ne\emptyset\}})=Q\cdot \K_{\tau_{W(\rho)}}(\nu^\#_{0\to\infty}).$$
Moreover, the Radon-Nikodym derivatives $Q$ may be expressed by the following. Suppose that $\gamma$ is a whole-plane SLE$_\kappa(2)$ curve with driving process $(e^{i\lambda_t};e^{iq_t})$.
With the symbols in Section \ref{whole-kappa-rho} (e.g., $X_t=\lambda_t-q_t$, $Y_s=\sigma_s-p_s$, $\sigma_{u(t)}=\til W_t(\lambda_t)$, $p_{u(t)}=\til W_t(q_t)$), we have  $\tau_{W(\rho)}(W(\gamma))=u(\tau_\rho(\gamma))$, and
$$Q(\gamma_{\tau_\rho})=2^{d-2}\ha c^{-1}|\sin_2(X_{\tau_\rho})|^{1-\frac 8\kappa} e^{(\frac \kappa 8-1)\tau_\rho};$$
$$Q(W(\gamma_{\tau_\rho}))=2^{d-2}\ha c^{-1}|\sin_2(Y_{u(\tau_\rho)})|^{1-\frac 8\kappa} e^{(\frac \kappa 8-1)u(\tau_\rho)}.$$
Using (\ref{W2-1'},\ref{Nt'},\ref{Mt'}) and Lemma \ref{lemma-loop}, we may express the $M_{\tau_\rho}$ in Lemma \ref{whole-conformal'} as
\BGE
  M_{\tau_\rho} =|W'(0)|^{\frac\kappa 8-1}|\til W_{\tau_\rho}'(\lambda_{\tau_\rho})\til W_{\tau_\rho}'(q_{\tau_\rho})|^{\frac{6-\kappa}{2\kappa}}\cdot \frac{e^{\cc \Lambda^*(\gamma_{\tau_\rho},U^c)}}{e^{\cc  \Lambda^*(W(\gamma_{\tau_\rho}),V^c)}}\cdot\Big|\frac{\sin_2(Y_{u(\tau_\rho)})} {\sin_2(X_{\tau_\rho})}\Big|^{1-\frac 6\kappa}. \label{M-Q}
\EDE

For a curve $\gamma$  started from $0$ that intersects $\rho$, we use $\gamma_{\tau_\rho}$ and $\gamma^{\tau_\rho}$ to denote the parts of $\gamma$ before $\tau_\rho$ and after $\tau_\rho$, respectively. Since $\rho$ separates $0$ from $U^c$, $\gamma\cap U^c\ne\emptyset$ iff $\gamma^{\tau_\rho}\cap U^c\ne\emptyset$. Recall that $K_{\tau_\rho}$ is the interior hull generated by $\gamma_{\tau_\rho}$.
Suppose that  $\gamma^{\tau_\rho}$ lies in the closure of $\ha\C\sem K_{\tau_\rho}$. If a loop is disjoint from $\gamma_{\tau_\rho}$ and intersects both $\gamma^{\tau_\rho}$ and $U^c$, then the loop is contained in  $\ha\C\sem K_{\tau_\rho}$. Thus, ${\cal L}(\gamma,U^c)$ is the disjoint union of ${\cal L}(\gamma_{\tau_\rho},U^c)$ and ${\cal L}_{\ha\C\sem K_{\tau_\rho}}(\gamma^{\tau_\rho},U^c)$. Using the above facts, the formula (\ref{normalized-Brownian-equality}), the CMP for $\mu^1_0$ at  $\tau_\rho$ (note that $\ha\C(\gamma_{\tau_\rho};0)=\ha\C\sem K_{\tau_\rho}$), and Remark \ref{record},
with the notation in (\ref{chordal-BL}), we have the expression:
 $$ \mu^1_{U;0}|_{\{\cdot\cap \rho\ne\emptyset\}}=Qe^{\cc \Lambda^*(\cdot,U^c)}\cdot \K_{\tau_\rho}(\nu^\#_{0\to\infty})(d\gamma_{\tau_\rho})\oplus  \mu^{\ha\C\sem K_{\tau_\rho}}_{ U\sem K_{\tau_\rho}; (\gamma_{\tau_\rho})_{\tip}\to 0}(d\gamma^{\tau_\rho}).$$
 Similarly,
 $$ \mu^1_{V;0}|_{\{\cdot\cap W(\rho)\ne\emptyset\}}=Qe^{\cc \Lambda^*(\cdot,V^c)}\cdot \K_{\tau_{W(\rho)}}(\nu^\#_{0\to\infty})(d\beta_{\tau_{W(\rho)}})\oplus  \mu^{\ha\C\sem L_{\tau_{W(\rho)}}}_{V\sem L_{\tau_{W(\rho)}}; (\beta_{\tau_{W(\rho)}})_{\tip}\to 0}(d\beta^{\tau_{W(\rho)}}).$$
 From Lemma \ref{whole-conformal'} and (\ref{M-Q}), we find that the $W$-image of the measure
 $$ |W_{\tau_\rho}'(e^{i\lambda_{\tau_\rho}})  W_{\tau_\rho}'(e^{iq_{\tau_\rho}})|^{\frac{6-\kappa}{2\kappa}}\Big|\frac{\sin_2(Y_{u(\tau_\rho)})} {\sin_2(X_{\tau_\rho})}\Big|^{1-\frac 6\kappa}\cdot Qe^{\cc \Lambda^*(\cdot,U^c)}\cdot \K_{\tau_\rho}(\nu^\#_{0\to\infty})$$
 is
 $$|W'(0)|^{1-\frac \kappa 8} Qe^{\cc \Lambda^*(\cdot,V^c)}\cdot \K_{\tau_{W(\rho)}}(\nu^\#_{0\to\infty}).$$
From Lemma \ref{lem-mult}, we see that the $W$-image of the kernel
$$|W_{\tau_\rho}'(e^{i\lambda_{\tau_\rho}})  W_{\tau_\rho}'(e^{iq_{\tau_\rho}})|^{-\frac{6-\kappa}{2\kappa}}\Big|\frac{\sin_2(Y_{u(\tau_\rho)})} {\sin_2(X_{\tau_\rho})}\Big|^{\frac 6\kappa-1}\cdot  \mu^{\ha\C\sem K_{\tau_\rho}}_{ U\sem K_{\tau_\rho}; (\gamma_{\tau_\rho})_{\tip}\to 0}$$
is
$$\mu^{\ha\C\sem L_{\tau_{W(\rho)}}}_{V\sem L_{\tau_{W(\rho)}}; (\beta_{\tau_{W(\rho)}})_{\tip}\to 0}.$$
Combining the above six displayed formulas, we get
$$W( \mu^1_{U;0}|_{\{\cdot\cap \rho\ne\emptyset\}})=|W'(0)|^{1-\frac \kappa 8}\mu^1_{V;0}|_{\{\cdot\cap W(\rho)\ne\emptyset\}}.$$
Since $\mu^1_{U;0}$ and $\mu^1_{V;0}$ are supported by non-degenerate loops rooted at $0$, by choosing $\rho=\{|z|=1/n\}$ and letting $n\to\infty$, we finish the proof of (\ref{mu1Uz}).

Finally, we prove (\ref{mu0U}). From (\ref{decomposition-unrooted}) we get
$$ \mu^0_U(d\gamma)\otimes {\cal M}_\gamma(dz)= \mu^1_{U;z}(d\gamma) \overleftarrow{\otimes} {\bf 1}_U\cdot\mA^2(dz).$$
Applying the map $W\otimes W$ to  both sides, and using Lemma \ref{conformal-content} and (\ref{mu1Uz}), we conclude that
$$W(\mu^0_U)(d\gamma)\otimes {\cal M}_\gamma(dz)=\mu^0_V(d\gamma)\otimes {\cal M}_\gamma(dz).$$
Let $L$ be a compact subset of $V$, and $\Gamma_L=\{\gamma: \lin{\Cont}_d(\gamma\cap L)>0\}$.
Restricting both sides of the above formula to $z\in L$, and looking at the marginal measures of the first component, we get $W(\mu^0_U)|_{\Gamma_L}=\mu^0_V |_{\Gamma_L}$. Since $\mu^0_V$-a.s.\ the Minkowski content measure for $\gamma$ is strictly positive, we see that $\mu^0_V$ is supported by $\bigcup_{L\subset V} \Gamma_L$, and so does $W(\mu^0_U)$. This implies that (\ref{mu0U}) holds and completes the proof of Theorem \ref{Thm-subdomain}.
\end{proof}

\section{SLE Bubble Measures} \label{Section-B}
In this section, we will construct SLE$_\kappa$ loop measures, for $\kappa\in(0,8)$, rooted at boundary, which we also call SLE$_\kappa$ bubble measures, and study their basic properties. The SLE$_\kappa$ bubble measures were first constructed in \cite{LSW-8/3} for $\kappa=8/3$ using the restriction property of SLE$_{8/3}$, and later in \cite{CLE} for $\kappa\in(8/3,4]$ in order to construct CLE.

The argument in this section is similar to the construction of  SLE  loop measures in $\ha\C$. We will need the degenerate two-sided radial SLE process. To motivate the definition, let's consider a two-sided radial SLE$_\kappa$ curve in $\HH$ from  $a\in\R$ to $b\in\R$ through $w\in\HH$. From \cite{SW} the curve up to hitting $w$ or separating $b$ from $\infty$ is a chordal SLE$_\kappa(2,\kappa-8)$ curve started from $a$ with force points $(b,w)$ (modulo a time change). We now define a degenerate two-sided radial SLE$_\kappa$ curve in $\HH$ from  $a^-$ to $a^+$ through $w\in\HH$. Roughly speaking, it is the limit of the above curve when $b\to a^+$. More specifically, the degenerate two-sided radial SLE$_\kappa$ curve is defined by first running a  chordal SLE$_\kappa(2,\kappa-8)$ curve $\beta$ started from $a$ with force points $(a^+,w)$ up to a nontrivial stopping time $\tau$ before $w$ is reached, and then continuing it with a two-sided radial SLE$_\kappa$ curve in the remaining domain from $\beta(\tau)$  to $a^+$ through $w$. The definition does not depend on the choice of the stopping time $\tau$. Similarly, we may define degenerate two-sided radial SLE$_\kappa$ curve in $\HH$ from  $a^+$ to $a^-$ through $w$. We have the obvious reversibility property: the time-reversal of a degenerate two-sided radial SLE$_\kappa$ curve in $\HH$ from  $a^-$ to $a^+$ through $w$ is a degenerate two-sided radial SLE$_\kappa$ curve in $\HH$ from  $a^+$ to $a^-$ through $w$. Moreover, conditional on any arm (between $w$ and $a^+$ or $a^-$) of the curve, the other arm is a chordal SLE$_\kappa$ curve. From Lemmas \ref{conformal-content} and \ref{Minkowski-radial} we see that the above degenerate two-sided radial SLE$_\kappa$ curve possesses Minkowski content measure in $\C\sem \{a\}$, which is parametrizable for the loop without $a$.

From the definition, we see that a degenerate two-sided radial SLE$_\kappa$ curve satisfies CMP. We now use the language of kernels to describe this CMP. For $a\in\R$, let $\Gamma(\HH;a^+)$ denote the set of curves $\gamma$ in $\lin\HH$ (modulo a time change) from $a$ to another point $\gamma_{\tip}\in\lin\HH$, such that there is a unique connected component of $\ha\HH\sem \gamma$ whose boundary contains $(a,a+\eps)$ for some $\eps>0$ and has two distinct prime ends determined by  $a^+$ and $\gamma_{\tip}$. Let $\HH(\gamma;a^+)$ denote this connected component. For $\gamma$ in this space, the chordal SLE$_\kappa$ measure $\mu^\#_{\HH(\gamma;a^+);\gamma_{\tip}\to a^+}$ is well defined, and the map from $\gamma$ to this measure is a kernel.  For $a\in\R$ and  $w\in\HH $, let $\Gamma(\HH;a^+;w)$ denote the set of $\gamma\in\Gamma(\HH;a^+)$ such that $w\in \HH(\gamma;a^+)$. For $\gamma$ in this space,  the two-sided radial SLE$_\kappa$ measure $\nu^\#_{\HH(\gamma;a^+);\gamma_{\tip}\to w\to a^+}$ is well defined, and the map from $\gamma$ to this measure is a kernel. Let $\nu^\#_{\HH;a^-\to w;a^+}$ denote the law of a chordal SLE$_\kappa(2,\kappa-8)$ curve started from $a$ with force points $(a^+,w)$. Let $\nu^\#_{\HH;a^-_+\rt w}$ denote the law of a degenerate two-sided radial SLE$_\kappa$ curve (modulo a time change) from $a^-$ to $a^+$ through $w$. The CMP of the degenerate two-sided radial SLE can now be stated as follows. If $\tau$ is a nontrivial stopping time, then
\BGE \K_\tau(\nu^\#_{\HH;a^-\to  w;a^+})|_{\{\tau<T_{w}\}}(d\gamma_\tau)\oplus \nu^\#_{\HH( \gamma_\tau;a^+);(\gamma_\tau)_{\tip}\to w \to a^+}(d\gamma^\tau)=\nu^\#_{\HH;a^-_+\rt w}|_{\{\tau<T_{w}\}},\label{CMP-degenerate}\EDE
where implicitly in the formula is that $\K_\tau(\nu^\#_{\HH;a^-\to w;a^+})|_{\{\tau<T_{w}\}}$ is supported by $\Gamma(\HH;a^+;w)$.

We may similarly define $\Gamma(\HH;a^-)$, $\HH(\gamma;a^-)$, and $\Gamma(\HH;a^-;w)$. Then (\ref{CMP-degenerate})
holds with $a^+$, $a^-$ and $a^-_+$ replaced with $a^-$, $a^+$ and $a^+_-$, respectively.

For $a\in\R$, we use $\nu^\#_{\HH;a^-\to\infty;a^+}$ to denote the law of a chordal SLE$_\kappa(2)$ curve started from $a$ with force point $a^+$. The following proposition described the Radon-Nikodym derivatives between $\nu^\#_{\HH;a^-\to w;a^+}$ and $\nu^\#_{\HH;a^-\to\infty;a^+}$ stopped at certain stopping times, which follows immediately from \cite[Theorem 6]{SW}.

\begin{Proposition}
	Let $a\in\R$, $w\in\HH$, and let $\tau_w$ be the first time that the curve visits $w$ or disconnects $w$ from $\infty$. Then for any stopping time $\tau$,
	$$\K_\tau({\bf 1}_{\{\tau<\tau_w\}}\cdot \nu^\#_{\HH;a^-\to w;a^+})(d\gamma_\tau)=R_w({\gamma_\tau})\cdot \K_\tau({\bf 1}_{\{\tau<\tau_w\}}\cdot \nu^\#_{\HH;a^-\to\infty;a^+})(d\gamma_\tau),$$
	where $R_w({\gamma_\tau})$ is given by the following. Let  $\gamma_\tau$ be parametrized by half-plane capacity, and let $\lambda_t$ and $g_t$, $0\le t\le \tau$, be the chordal Loewner driving function and maps for $\gamma_\tau$ (see, e.g., Appendix \ref{Appendix}). Then
	\BGE R_w({\gamma_\tau})=|g_\tau'(w)|^{\frac{8-\kappa}8}\Big(\frac{|g_\tau(w)-\lambda_\tau|}{|w-a|}\Big)^{\frac{\kappa-8}\kappa} \Big(\frac{|g_\tau(w)-g_\tau(a^+)|}{|w-a|}\Big)^{\frac{\kappa-8}\kappa} \Big(\frac{\Imm g_\tau(w)}{\Imm w}\Big)^{\frac{(\kappa-8)^2}{8\kappa}}.\label{Q_tau}\EDE
\end{Proposition}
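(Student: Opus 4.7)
My plan is to recognize that both measures in the statement are laws of chordal SLE processes with force points, namely $\nu^\#_{\HH;a^-\to w;a^+}$ is the law of a chordal SLE$_\kappa(2,\kappa-8)$ from $a$ with force points $(a^+,w)$ and $\nu^\#_{\HH;a^-\to\infty;a^+}$ is the law of a chordal SLE$_\kappa(2)$ from $a$ with force point $a^+$. Up to the time $\tau_w$, both driving functions are defined (the force points are not swallowed), and their SDEs differ only by the drift coming from the extra force point $w$, of the form $\frac{\kappa-8}{2}\Ree\cot\text{-type drift}$, more precisely $\frac{\kappa-8}{2}\Ree\frac{1}{\lambda_t-g_t(w)}\,dt$.

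The next step is to apply the Girsanov-type computation encoded in \cite[Theorem~6]{SW}: adding such a drift from one force point to a chordal SLE process with another force point amounts to weighting the law by an explicit positive local martingale $M_t$. Up to a time-independent normalizing constant, $M_t$ is known from that theorem (and the general SLE$_\kappa(\rho)$ coordinate-change formalism) to factor as
\[
M_t=C\,|g_t'(w)|^{h}|g_t(w)-\lambda_t|^{\alpha_1}|g_t(w)-g_t(a^+)|^{\alpha_2}(\Imm g_t(w))^{\beta},
\]
where $h$ is the SLE boundary scaling exponent appearing in SW for the ``$\infty$ to $w$'' coordinate change (so $h=\frac{8-\kappa}{8}$, matching the general fact $g_t'(w)^{h}$ keeps track of conformal covariance at an interior force point), $\alpha_1,\alpha_2$ are the ``bulk-to-boundary'' exponents attached to the force points $\lambda_t$ and $g_t(a^+)$ (both equal $\frac{\kappa-8}{\kappa}$ by the SW exponent $\rho/\kappa$ with $\rho=\kappa-8$, doubled to account for the complex position of $w$), and $\beta$ is the interior-interior exponent $\frac{(\kappa-8)^2}{8\kappa}$ that comes out of the Itô calculation on $\Imm g_t(w)$.

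Once this local martingale is identified from SW, the only remaining task is to normalize it so that $M_0=R_w(\emptyset)=1$. At $t=0$ one has $g_0=\id$, $\lambda_0=a$, and $g_0(a^+)=a$, so the boundary factors evaluated at $t=0$ become $|w-a|^{\alpha_1}|w-a|^{\alpha_2}$, the bulk factor becomes $(\Imm w)^{\beta}$, and the derivative factor becomes $1$. Dividing by these produces exactly the three ratios $\Big(\frac{|g_\tau(w)-\lambda_\tau|}{|w-a|}\Big)^{\frac{\kappa-8}\kappa}$, $\Big(\frac{|g_\tau(w)-g_\tau(a^+)|}{|w-a|}\Big)^{\frac{\kappa-8}\kappa}$ and $\Big(\frac{\Imm g_\tau(w)}{\Imm w}\Big)^{\frac{(\kappa-8)^2}{8\kappa}}$ appearing in (\ref{Q_tau}), together with the unnormalized $|g_\tau'(w)|^{\frac{8-\kappa}{8}}$ prefactor.

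The main obstacle I anticipate is purely bookkeeping: verifying by a direct Itô computation that the exponents $(h,\alpha_1,\alpha_2,\beta)=(\tfrac{8-\kappa}{8},\tfrac{\kappa-8}{\kappa},\tfrac{\kappa-8}{\kappa},\tfrac{(\kappa-8)^2}{8\kappa})$ are the ones that make the drift of $M_t$ vanish, so that it is indeed a local martingale producing precisely the required drift shift in $\lambda_t$. Since \cite[Theorem~6]{SW} handles exactly this coordinate change between chordal SLE$_\kappa(\rho)$ with different force-point configurations, it packages this verification, and the statement follows by stopping at $\tau\wedge\tau_w$, using that $M$ is a strictly positive local martingale on $[0,\tau_w)$ so that the change of measure is valid on $\{\tau<\tau_w\}$ with no integrability issue.
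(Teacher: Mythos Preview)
Your proposal is correct and takes essentially the same approach as the paper: the paper's entire proof consists of the single sentence ``This follows immediately from \cite[Theorem 6]{SW},'' and your argument is precisely an unpacking of how that theorem applies here, identifying the SW local martingale and normalizing it so that $M_0=1$. The exponent bookkeeping you outline is exactly what Schramm--Wilson's coordinate-change theorem delivers, so there is nothing materially different between your route and the paper's.
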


\begin{Theorem}
	Let
	$ G_{\HH}(w)= |w|^{\frac2\kappa(\kappa-8)}(\Imm w)^{\frac{(\kappa-8)^2}{8\kappa}}$. Then the following are true.
	\begin{enumerate} [label=(\roman*)]
		\item  For every $a\in\R$, there is a unique $\sigma$-finite measure $\mu^1_{\HH;a^-_+}$, which is supported by non-degenerate loops in $\lin\HH$ rooted at $a$ which possess Minkowski content measure in $\C\sem\{0\}$ that is parametrizable for the loop without $a$, and satisfies
		\BGE \mu^1_{\HH;a^-_+}(d\gamma)\otimes {\cal M}_{\gamma;\C\sem\{0\}}(dw)=\nu^\#_{\HH;a^-_+\rt w}(d\gamma)\overleftarrow{\otimes} G_{\HH}(w-a)\cdot \mA^2(dw),\quad a\in\R.\label{decomposition-loop-d}\EDE
		Moreover, the time-reversal of $\mu^1_{\HH;a^-_+}$ is $\mu^1_{\HH;a^+_-}$, which satisfies the same property as  $\mu^1_{\HH;a^-_+}$ except that (\ref{decomposition-loop-d}) is modified with $a^+$ and $a^-$ swapped.
		\item  For every $a\in\R$, $\mu^1_{\HH;a^-_+}$ satisfies the following CMP: if $\tau$ is a nontrivial stopping time, then
		\BGE \K_\tau(\mu ^1_{\HH;a^-_+}|_{\{\tau<T_a\}})(d\gamma_\tau)\oplus \mu^\#_{\HH(\gamma_\tau;a^+);(\gamma_\tau)_{\tip}\to a^+}(d\gamma^\tau)=\mu^1_{\HH;a^-_+}|_{\{\tau<T_a\}}, \label{CMP-chordal-loop-d}\EDE
		where implicitly stated in the formula is that $\K_\tau(\mu^1_{\HH;a^-_+})|_{\{\tau<T_a\}}$ is supported by $\Gamma(\HH;a^+)$.
		\item  Let  $J(z)=-1/z$, and
		$ \mu^1_{\HH;\infty^+_-}=J(\mu^1_{\HH;0^-_+})$.
		Then $\mu^1_{\HH;\infty^+_-}$ is  supported by loops in $\lin\HH$ rooted at $\infty$, which possesses Minkowski content measure (in $\C$), and satisfies
		\BGE \mu^1_{\HH;\infty^+_-}(d\gamma)\otimes {\cal M}_\gamma(dw)=\nu^\#_{\HH;\infty^+_- \rt w}(d\gamma)\overleftarrow{\otimes} (\Imm w)^{\frac{(\kappa-8)^2}{8\kappa}} \mA^2(dw),\label{decomposition-loop-infty-d}\EDE
		where we define $\nu^\#_{\HH;\infty^+_-\rt w}=J(\nu^\#_{\HH;0^-_+\rt J(w)})$.		Moreover, for any bounded set $S\subset \C$, $\mu^1_{\HH;\infty^+_-}$-a.s.\ $\lin\Cont(\gamma\cap S)<\infty$.
		\item If $F$ is a  M\"obius automorphism of $\HH$, then $F(\mu^1_{\HH;x^-_+})=|F'(x)|^{\frac8\kappa -1} \mu^1_{\HH;F(x)^-_+}$ for any $x\in\R$ such that $F(x)\in\R$. If $F(z)=az+b$ for some $a,b\in\R$ with $a>0$, then  $F(\mu^1_{\HH;\infty^+_-})=|a|^{1-\frac 8\kappa}\mu^1_{\HH;\infty^+_-}$.
		\item For any $r>0$ and $a\in\R$, $\mu^1_{\HH;a^-_+}(\{\gamma:\diam(\gamma)>r\})$is finite. Moreover,
		there is a constant $C\in (0,\infty)$ such that $\mu^1_{\HH;a^-_+}(\{\gamma:\diam(\gamma)>r\})=Cr^{1-\frac 8\kappa}$  for any $a\in\R$ and $r>0$.
		\item  For $a\in\R$, if a measure $\mu'$ supported by non-degenerate loops in $\lin\HH$ rooted at $a$ satisfies (ii) and that $\mu'(\{\gamma:\diam(\gamma)>r\})<\infty$ for every $r>0$, then $\mu'=c\mu^1_{\HH;a^-_+}$ for some $c\in[0,\infty)$.
	\end{enumerate}
	\label{Thm-loop-measure-d}
\end{Theorem}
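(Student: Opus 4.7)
The plan is to mirror, step for step, the proof of Theorem \ref{Thm-loop-measure}, with the following substitutions: the two-sided whole-plane SLE$_\kappa$ measure $\nu^\#_{z \rt w}$ is replaced by the degenerate two-sided radial SLE measure $\nu^\#_{\HH; a^-_+ \rt w}$; the whole-plane SLE$_\kappa(2)$ measure $\nu^\#_{0 \to \infty}$ is replaced by the chordal SLE$_\kappa(2)$ measure $\nu^\#_{\HH; a^- \to \infty; a^+}$; and the decomposition of chordal SLE in Proposition \ref{decomposition-Thm} will be applied in the remaining simply connected domain $\HH(\gamma_\tau; a^+)$ from $(\gamma_\tau)_{\tip}$ to $a^+$. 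First I would derive the explicit form of $G_\HH$ by postulating the M\"obius covariance in (iv) and the decomposition (\ref{decomposition-loop-d}), then applying the scaling $F(z) = \lambda z$ and an involution of $\HH$ fixing or swapping the prime ends at $0$ and $\infty$; this forces $G_\HH(w) = |w|^{2(\kappa-8)/\kappa}(\Imm w)^{(\kappa-8)^2/(8\kappa)}$ up to a normalization that I absorb into $\mu^1_{\HH; a^-_+}$.

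For the construction in (i), after reducing to $a = 0$ by translation, I would fix a nontrivial stopping time $\tau$ and a compact set $K \subset \HH$ disjoint from $\gamma_\tau$. Writing $\mu^\#_{\gamma_\tau} = \mu^\#_{\HH(\gamma_\tau; 0^+); (\gamma_\tau)_{\tip} \to 0^+}$, $\nu^\#_{\gamma_\tau; w}$ for the two-sided radial SLE in $\HH(\gamma_\tau; 0^+)$ through $w$, and $G_{\gamma_\tau}(w)$ for the corresponding chordal Green's function, Proposition \ref{decomposition-Thm} yields
\[
\mu^\#_{\gamma_\tau}(d\gamma^\tau) \otimes {\cal M}_{\gamma^\tau \cap K}(dw) = \nu^\#_{\gamma_\tau; w}(d\gamma^\tau) \overleftarrow{\otimes} {\bf 1}_K G_{\gamma_\tau} \cdot \mA^2(dw).
\]
I will then compute $G_{\gamma_\tau}(w)$ via (\ref{Green-H}) and (\ref{Green}) using a conformal map from $\HH(\gamma_\tau; 0^+)$ onto $\HH$ sending $(\gamma_\tau)_{\tip}$ to $0$ and $0^+$ to $\infty$, and use the proposition on $R_w$ just above the theorem to identify a function $Q(\gamma_\tau)$ (the analogue of (\ref{R-gamma})) satisfying the crucial identity $Q(\gamma_\tau) G_{\gamma_\tau}(w) = R_w(\gamma_\tau) G_\HH(w)$. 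Combining this with the CMP (\ref{CMP-degenerate}) and proceeding exactly as in (\ref{decomposition-whole-triple})--(\ref{mu-rest-tau}), I obtain finite measures
\[
\mu_{\tau; K} = Q \cdot \K_\tau({\bf 1}_{\Gamma_{\tau; K}} \cdot \nu^\#_{\HH; 0^-\to\infty; 0^+})(d\gamma_\tau) \oplus \mu^\#_{\gamma_\tau}(d\gamma^\tau),
\]
whose finiteness on $\Gamma_K := \{\gamma : \lin\Cont_d(\gamma\cap K) > 0\}$ is guaranteed by $\int_K G_\HH(w)\,\mA^2(dw) < \infty$ (the only singularity of $G_\HH$ is integrable at $0$, which $K$ avoids, and at $\R$, where $(\Imm w)^{(\kappa-8)^2/(8\kappa)}$ is locally integrable). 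The consistency argument for nested compact sets and a sequence $\tau_n$ of stopping times at which the loop first enters a small semidisk around $0$ then patches these into a single $\sigma$-finite measure $\mu^1_{\HH; 0^-_+}$ satisfying (\ref{decomposition-loop-d}); reversibility follows from reversibility of the degenerate two-sided radial SLE.

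The CMP in (ii) is read off directly from the construction, the extension from $\tau_n$ to an arbitrary nontrivial stopping time $\tau$ being done verbatim as in the proof of Theorem \ref{Thm-loop-measure} (ii) via the CMP of chordal SLE. Statement (iii) is the pullback under $J(z)=-1/z$, combined with Lemma \ref{conformal-content} to transport the Minkowski content measure, noting that $J$ sends the Jacobian factor in $G_\HH$ to $1$ as $w \to \infty$. The covariance (iv) is obtained by applying $F \otimes F$ to both sides of (\ref{decomposition-loop-d}) and comparing exponents via Lemma \ref{conformal-content}, exactly as in the proof of Theorem \ref{Thm-loop-measure} (v), using $|F'(x)|^{8/\kappa-1}$ in place of $|F'(z)|^{2-d}$ to reflect the boundary exponent. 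For (v), the diameter bound $\mu^1_{\HH;a^-_+}(\{\diam(\gamma) > r\})=Cr^{1-8/\kappa}$ will follow from the same stopping-time computation as in Theorem \ref{Thm-loop-measure} (vi), this time using the invariant density of the chordal SLE$_\kappa(2)$ driving diffusion $X_t$ (stationary distribution with density proportional to $\sin(\cdot)^{8/\kappa}$ on $(0,\pi)$ after the standard time change) to compute $\EE[Q(\gamma_{\sigma_0})]$; the scaling exponent is forced by (iv). Finally, (vi) repeats the Radon-Nikodym argument of Theorem \ref{Thm-loop-measure} (vii): for an arbitrary measure $\mu'$ satisfying (ii) with finite $\{\diam > r\}$ masses, one shows $\K_{\tau_r}(\Cont(\cdot \cap K) \cdot \mu') = c_{r;K} \K_{\tau_r}(\Cont(\cdot \cap K) \cdot \mu^1_{\HH; 0^-_+})$ for compact $K \subset \HH$ by a radial-SLE$(2)$ CMP martingale argument, and then shows $c_{r;K}$ is independent of $r$ and $K$.

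The hardest part will be the verification of the key algebraic identity $Q(\gamma_\tau) G_{\gamma_\tau}(w) = R_w(\gamma_\tau) G_\HH(w)$. This requires carefully writing out the conformal map from $\HH(\gamma_\tau; 0^+)$ onto $\HH$ sending $(\gamma_\tau)_{\tip}$ to $0$ and $0^+$ to $\infty$, and matching the boundary factor $|g_\tau(w) - \lambda_\tau|^{\cdot}$, the force-point factor $|g_\tau(w) - g_\tau(0^+)|^{\cdot}$, the interior factor $(\Imm g_\tau(w))^{\cdot}$, and the derivative factor $|g_\tau'(w)|^{\cdot}$ in (\ref{Q_tau}) against the corresponding factors produced by (\ref{Green-H}) applied in the image half-plane. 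Once the exponents align (and they must, by the central-charge identity relating $\kappa$ and $\cc$), the correct $Q(\gamma_\tau)$ emerges, and the remainder of the argument is structurally parallel to the whole-plane case. A subtler technical point, analogous to the role of the countable family $\Xi$ in the proof of Theorem \ref{Thm-loop-measure} (i), will be verifying that $\mu^1_{\HH; 0^-_+}|_{\{\tau < T_0\}}$ and $\mu_\tau$ agree on a support large enough to force equality; this will again be done by restricting to $\Gamma_{\tau;K}$ for a countable exhausting family of compact $K \subset \HH$ and invoking the fact (from Lemmas \ref{conformal-content} and \ref{Minkowski-radial}) that degenerate two-sided radial SLE a.s.\ possesses a parametrizable Minkowski content measure on $\C \setminus \{0\}$.
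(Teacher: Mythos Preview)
Your overall architecture for (i), (ii), (iii), (iv), (vi) is correct and matches the paper's own proof, which explicitly says it follows Theorem \ref{Thm-loop-measure} with the substitutions you describe. One small point in (i): the paper remarks that uniqueness must be argued without an analogue of (\ref{defofloop}), because the degenerate two-sided radial curve only possesses Minkowski content measure in $\C\setminus\{a\}$, so one cannot simply divide by $\Cont(\gamma)$; instead one restricts (\ref{decomposition-loop-d}) to $\gamma\in\Gamma_{\tau_r}$ and $w\in K$ and reads off $\mu|_{\Gamma_K}$ directly. This is easy but worth flagging.

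The real gap is in (v). You cannot port the whole-plane computation of $\EE[Q(\gamma_{\sigma_0})]$ verbatim. In the whole-plane case the relevant diffusion $X_t=\lambda_t-q_t$ lives on $(0,2\pi)$, the curve is parametrized on $(-\infty,\infty)$, and $X_t$ is already in its stationary law (density $\propto\sin_2^{8/\kappa}$) at any deterministic capacity time; moreover $\diam(\gamma)\asymp e^{\ccap(K_\gamma)}$ gives a direct diameter proxy. In the chordal bubble setting none of this is available: the chordal SLE$_\kappa(2)$ process $X_t=\lambda_t-g_t(0^+)$ is an unbounded Bessel-type process started at $0$, not a diffusion on $(0,\pi)$ with density $\sin^{8/\kappa}$, and $\hcap$ does not control diameter. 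The paper instead tracks \emph{both} endpoints $q^\pm_t$ of the image interval of $K_t^{\doub}$ under the extended $g_t$, uses $q^+_t-q^-_t$ as the diameter proxy, forms the ratio
\[
Y_t=\frac{q^+_t-\lambda_t}{q^+_t-q^-_t}-\frac{\lambda_t-q^-_t}{q^+_t-q^-_t}\in[-1,1],
\]
and time-changes by $u(t)=\frac\kappa2\log(q^+_t-q^-_t)$ (which maps $(0,\infty)$ onto $(-\infty,\infty)$) to obtain a Jacobi-type diffusion $\ha Y$ on $[-1,1]$ with stationary density $\propto(1-x)^{4/\kappa-1}(1+x)^{8/\kappa-1}$. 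Evaluating at the stopping time $\tau=u^{-1}(0)$ (first time $q^+-q^-=1$) then gives the explicit law of $q^+_\tau-\lambda_\tau$, and since $Q(\gamma_\tau)=\ha c^{-1}(q^+_\tau-\lambda_\tau)^{1-8/\kappa}$, one computes $\mu^1_{\HH;0^-_+}(\Gamma_\tau)$ as a ratio of Beta integrals. Your proposed stationary density is the wrong object for this geometry; you need this two-endpoint construction instead.
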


\begin{Remark}
For $\kappa\ge 8$, it is easy to construct an SLE$_\kappa$ bubble measure $\mu^\#_{\HH;a^-_+}$ that satisfies the CMP as in Theorem \ref{Thm-loop-measure-d} (ii). This is similar to Remark \ref{>8}. To construct a random curve with law $\mu^\#_{\HH;a^-_+}$, we start a chordal SLE$_\kappa(\kappa-6)$ curve in $\HH$ from $a$ to $\infty$ with force point $a^+$, and after the curve reaches $\infty$, we continue it with a chordal SLE$_\kappa$ curve from $\infty$ to $0$ growing in the remaining domain. \label{Remark-8}
\end{Remark}

\begin{proof}[Proof of Theorem \ref{Thm-loop-measure-d}] This proof is very similar to the proof of Theorem \ref{Thm-loop-measure}. We will point out the main difference and omit the parts that are similar.
	
	(i) It suffices to consider the case $a=0$. Let $\gamma_\tau(t)$, $0\le t\le \tau$, be a chordal Loewner curve started from $0$ with driving function $\lambda_t$, $0\le t\le \tau$. Let $g_t$ be the corresponding Loewner maps. Suppose $\gamma_\tau\cap (0,\infty)=\emptyset$.
	Then $g_\tau:(\HH(\gamma_\tau;0^+);\infty,(\gamma_\tau)_{\tip}=\gamma_\tau(\tau),0^+)\conf(\HH;\infty,\lambda_\tau,q_\tau)$ for some $q_\tau>\lambda_\tau$.
	We have the chordal SLE$_\kappa$ measure $\mu^\#_{\HH(\gamma_\tau;0^+);\gamma_\tau(\tau)\to 0^+}$ and the two-sided radial SLE$_\kappa$ measure $\nu^\#_{\HH(\gamma_\tau;0^+);\gamma_\tau(\tau)\to w\to 0^+}$ for each $w\in \HH(\gamma_\tau;0^+)$. Since these measures are all determined by $\gamma_\tau$, we now write $\mu^\#_{\gamma_\tau}$ and $\nu^\#_{\gamma_\tau;w}$, respectively, for them.
	We write $G_{\gamma_\tau}(w)$ for the Green's function $G_{\HH(\gamma_\tau;0^+);\gamma_\tau(\tau)\to 0^+}(w)$. Let $K$ be a compact subset of $\lin\HH\sem\{0\}$ such that $K\cap \gamma_\tau=\emptyset$.
	From Proposition \ref{decomposition-Thm}, we have
	\BGE \mu^\#_{\gamma_\tau}(d\gamma)\otimes {\cal M}_{\gamma\cap K}(dw)=\nu^\#_{\gamma_\tau;w}(d\gamma)\overleftarrow{\otimes} ({\bf 1}_K G_{\gamma_\tau}\cdot \mA^2)(dw).\label{decomposition-degenerate}\EDE
	
	We now compute $G_{\gamma_\tau}(w)=G_{\HH(\gamma_\tau;0^+);\gamma_\tau(\tau)\to 0}(w)$ for $w\in\HH(\gamma_\tau;0^+)$.
	  Let $\phi(z)=\frac{z-{\lambda_\tau}}{q_\tau-z}$.
	Then $\phi:(\HH;\lambda_{\tau},q_\tau)\conf(\HH;0,\infty)$.  Recall that $g_\tau:(\HH(\gamma_\tau;0^+);  \gamma_\tau(\tau),0^+)\conf(\HH;\ \lambda_\tau,q_\tau)$.
	By (\ref{Green-H}) and (\ref{Green}), we get
	\begin{align}
	G_{\gamma_\tau}(w)&=\ha c\,|g_\tau'(w)|^{2-d}|\phi'(g_\tau(w))|^{2-d} |\phi(g_\tau(w))|^{1-\frac 8\kappa} (\Imm\phi(g_\tau(w)))^{\frac \kappa 8+\frac 8\kappa -2} \nonumber \\
	&=\ha c|g_\tau'(w)|^{2-d}\cdot \frac{|q_\tau-\lambda_\tau|^{\frac 8\kappa-1}}{|g_\tau(w)-q_\tau|^{\frac 8\kappa -1}|g_\tau(w)-\lambda_\tau|^{\frac 8\kappa -1}}\cdot  (\Imm g_\tau(w))^{{\frac{\kappa}{8}+\frac 8\kappa -2}}. \label{G-degenerate}
	\end{align}
	
	Let $G_\HH(w)$ be as in the statement and
	\BGE Q(\gamma_\tau)= \ha c^{-1}|q_\tau-\lambda_\tau|^{1-\frac 8 \kappa}.\label{R-gamma-d}\EDE
	Using (\ref{Q_tau}) and (\ref{decomposition-d'}), we find that
	\BGE Q(\gamma_\tau)G_{\gamma_\tau}(w)=R_w({\gamma_\tau})G_{\HH}(w).\label{QGRG-d}\EDE
	From (\ref{decomposition-degenerate}) and (\ref{QGRG-d}), we get
	\BGE Q(\gamma_\tau)\mu^\#_{\gamma_\tau }(d\gamma^\tau)\otimes {\cal M}_{\gamma\cap K}(dw)=(R_w({\gamma_\tau}) \nu^\#_{\gamma_\tau ;w})(d\gamma^\tau)\overleftarrow{\otimes} ( {\bf 1}_K G_{\HH} \cdot \mA^2)(dw).\label{decomposition-d'}\EDE
	Note that the above two formulas are similar to (\ref{QGRG},\ref{decomposition-whole'}).
	
	For any stopping time $\tau$, define
	$$\Gamma_{\tau}=\{\gamma:\tau(\gamma)<T_0(\gamma),\K_\tau(\gamma) \in \Gamma(\HH;0^+),\gamma([0,\tau])\cap(0,\infty)=\emptyset\}.$$
	We view both sides of (\ref{decomposition-d'}) as kernels from $\gamma_\tau\in \Gamma_\tau$ to the space of curve-point pairs. Let $K$ be a fixed compact subset of $\lin\HH\sem\{0\}$. Let $\Gamma_{\tau;K}=\Gamma_\tau\cap\{\gamma:K\subset \HH({\K_\tau(\gamma)};0^+)\}$.
	Acting $ {\cal K}_\tau({\bf 1}_{\Gamma_{\tau;K}}\cdot\nu^\#_{\HH;0^-\to\infty;0^+}) (d \gamma_\tau)\otimes$ on the left of both sides of (\ref{decomposition-d'}), we get an equality of two measures on the space of curve-curve-point triples $(\gamma_\tau,\gamma^\tau,w)$, i.e.,
	\begin{align*}
	&[Q   \cdot {\cal K}_\tau({\bf 1}_{\Gamma_{\tau;K}}\cdot\nu^\#_{\HH;0^-\to\infty;0^+})(d\gamma_\tau)\otimes \mu^\#_{\gamma_\tau }(d\gamma^\tau)]\otimes {\cal M}_{\gamma^\tau\cap K}(dw)\\
	=& [ {\cal K}_\tau({\bf 1}_{\Gamma_{\tau;K}}\cdot\nu^\#_{\HH;0^-\to w;0^+})(d\gamma_\tau)\otimes  \nu^\#_{\gamma_\tau ;w}(d\gamma^\tau)]\overleftarrow{\otimes} ( {\bf 1}_K G_{\HH} \cdot \mA^2)(dw).
	\end{align*}
	
	The rest of the proof of (i) is almost the same as the part of the proof of Theorem \ref{Thm-loop-measure} (i) starting from the paragraph containing (\ref{decomposition-whole-triple}) except for the following modifications: we use  $\lin\HH\sem\{0\}$, $G_{\HH}$, $\nu^\#_{\HH;0^-\to\infty;0^+}$, $\nu^\#_{\HH;0^-\to w;0^+}$ $\nu^\#_{\HH;0^-_+\rt w}$, $\mu^1_{\HH;0^-_+}$, $\mu^1_{\HH;\infty^+_-}$, $\Gamma(\HH;0^+)$, $\HH(\cdot;0^+)$, and ${\cal M}_{\cdot;\C\sem\{0\}}$ to replace $\C\sem \{0\}$, $G_{\C}$, $\nu^\#_{0\to\infty}$, $\nu^\#_{0\to w}$, $\nu^\#_{0\rt w}$, $\mu^1_0$, $\mu^1_\infty$, $\Gamma(\C;0)$, $\ha\C(\cdot;0)$, and ${\cal M}_{\cdot}$, respectively.
	
	We need to prove the uniqueness of $\mu^1_{\HH;0^-_+}$ without a formula similar to (\ref{defofloop}). Suppose $\mu$ satisfies the properties of  $\mu^1_{\HH;0^-_+}$. Let $K$ be a compact subset of $\lin\HH\sem\{0\}$. Let $r\in(0,\dist(0,K))$ and $\tau_r$ be the first time that the curve reaches $\{|z|=r\}$. Restricting (\ref{decomposition-loop-d}) for $\mu$ to $\gamma\in \Gamma_{\tau_r}$ and $w\in K$, we get
	$$\mu|_{\Gamma_{\tau_r}}(d\gamma)\otimes {\cal M}_{\gamma\cap K}(dw)= \nu^\#_{\HH;0^-_+\rt w}\overleftarrow{\otimes} {\bf 1}_K G_{\HH}\cdot\mA^2(dw).$$
	Since $\mu$-a.s., ${\cal M}_{\gamma\cap K}$ is a finite measure, from the above formula, we get
	$$\mu|_{  \Gamma_K}=\Cont(\cdot)^{-1}\cdot \int_K \nu^\#_{\HH;0^-_+\rt w} G_{\HH}(w)\mA^2(dw)=\mu^1_{\HH;0^-_+}|_{\Gamma_K},$$
	where $\Gamma_K:=\{\gamma: \lin{\Cont}_d(\gamma\cap K)>0\}\subset \Gamma_{\tau_r}$. From the assumption we see that both $\mu$ and $\mu^1_{\HH;0^-_+}$ are supported by $\bigcup_K \Gamma_K$. So they must agree. Finally, the reversibility of $\mu^1_{\HH;0^-_+}$ follows from (\ref{decomposition-loop-d}), the reversibility of $\nu^\#_{\HH;0^-_+\rt w}$, and the uniqueness of $\mu^1_{\HH;0^-_+}$.
	
	(ii, iii, iv) The  proofs of (ii, iii, iv) are almost the same as  the proofs of Theorem \ref{Thm-loop-measure} (ii, iv, v), respectively, except for the modifications described near the end of the proof of (i).
	
	(v) By the translation invariance and the scaling property (iv), it suffices to prove the first sentence of (v) for $a=0$ and $r=1$. We will use the chordal Loewner equation (see Appendix \ref{Appendix}).
	Let $\gamma$ be a chordal SLE$_\kappa(2)$ curve started from $0$ with force point at $0^+$. Let $\gamma$ be parametrized by half-plane capacity, and $\lambda$ be its chordal Loewner driving function. Let $K_t$ and $g_t$ be the chordal Loewner hulls and maps, respectively, driven by $\lambda$. Recall that $\HH\sem K_t$ is the unbounded connected component of $\HH\sem \gamma([0,t])$, $g_t(\HH\sem K_t;\infty)\conf(\HH;\infty)$, satisfies $g_t'(\infty)=1$, and maps $\gamma(t)$ to $\lambda_t$. Let $K_t^{\doub}=\lin{K_t}\cup\{z\in-\HH:\lin z\in K_t\}$. By Schwarz reflection principle, $g_t$ extends to  $g_t:\C\sem K_t^{\doub}\conf\C\sem [q^-_t,q^+_t]$ for some $q^-_t\le q^+_t\in\R$. Since $\gamma(t)\in\pa K_t$, we get $q^-_t\le \lambda_t\le q^+_t$.
	Since $g_t'(\infty)=1$, we have $\ccap(K_t^{\doub})=\ccap([q^-_t,q^+_t])$. Thus, $\diam(\gamma([0,t]))\le \diam(K_t^{\doub})\le q^+_t-q^-_t$. By chordal Loewner equation (\ref{chordal-eqn}) and the definition of SLE$_\kappa(2)$ process (note that $(q^+_t)$ is the force point process), we see that  $(\lambda_t)$ and   $(q^\pm_t)$ satisfy the SDE
	\begin{align*}
	d\lambda_t&=\sqrt\kappa dB_t+\frac{2}{\lambda_t-q^+_t}\,dt;\\
	dq^\pm_t&=\frac{2}{q^\pm_t-\lambda_t}\,dt,
	\end{align*}
	for some Brownian motion $B_t$.
	So we have $d\log(q^+_t-q^-_t)=\frac{2}{(q^+_t-\lambda_t)(\lambda_t-q^-_t)}>0$.
	Let $$ Y_t=\frac{q^+_t-\lambda_t}{q^+_t-q^-_t}-\frac{\lambda_t-q^-_t}{q^+_t-q^-_t}\in[-1,1].$$
	Then $Y_t+1=\frac{2(q^+_t-\lambda_t)}{q^+_t-q^-_t}$ and $Y_t-1=\frac{2(q^-_t-\lambda_t)}{q^+_t-q^-_t}$.
	By It\^o's formula, $Y_t$ satisfies the SDE
	$$d Y_t=\frac{-2\sqrt\kappa}{q^+_t-q^-_t}\,dB_t+\frac{6}{(q^+_t-\lambda_t)(q^+_t-q^-_t)}\,dt -\frac{2}{(\lambda_t-q^-_t)(q^+_t-q^-_t)}\,dt-\frac{2 Y_t}{(q^+_t-\lambda_t)(\lambda_t-q^-_t)}\,dt.$$
	Let $u(t)=\frac \kappa 2 \log(q^+_t-q^-_t)$. Then $u$ is absolutely continuous and strictly increasing, and maps $(0,\infty)$ onto $(-\infty,\infty)$. Moreover, $u'(t)=\frac{\kappa}{(q^+_t-\lambda_t)(\lambda_t-q^-_t)}$ whenever $q^-_t<\lambda(t)<q^+_t$, which holds for almost every $t>0$. Let $v=u^{-1}$, $\ha Y_t=Y_{v(t)}$. By a straightforward computation, we find that $\ha Y_t$, $-\infty<t<\infty$, satisfies the SDE
	$$d\ha Y_t=-\sqrt{1-\ha Y_t^2}d\ha B_t-\frac 2\kappa (\ha Y_t+1)dt-\frac 4\kappa (\ha Y_t-1)dt.$$
	This agrees with the SDE in \cite[Remark 3 after Corollary 8.5]{tip} with $\delta_+=\frac 8\kappa$ and $\delta_-=\frac{16}\kappa$. Thus, for each fixed deterministic $t\in\R$, the law of $\ha Y_t$ has a density w.r.t.\ ${\bf 1}_{[-1,1]}\cdot\mA$, which is proportional to $(1-x)^{\frac4\kappa -1}(1+x)^{\frac 8\kappa -1}$. Let $\tau=v(0)$. Then $\tau$ is the first time $t$ such that $q^+_t-q^-_t=1$. So we get $1+Y_{\tau}=2(q^+_{\tau}-\lambda_{\tau})$. Thus, the law of $q^+_{\tau}-\lambda_{\tau}$ is proportional to ${\bf 1}_{[0,1]}(x)x^{\frac 8\kappa -1}(1-x)^{\frac4\kappa -1}\cdot \mA$. From the construction of $\mu^1_{\HH;0^-_+}$ and (\ref{R-gamma-d}) we see that
	$$\K_\tau(\mu^1_{\HH;0^-_+}|_{\Gamma_\tau})=\ha c^{-1}(q^+_\tau-\lambda_\tau)^{1-\frac 8\kappa}\cdot \K_\tau(\nu^\#_{\HH;0^-\to\infty;0^+}|_{\Gamma_\tau}).$$
	Thus, $$\mu^1_{\HH;0^-_+}(\Gamma_\tau)=\frac{\int_0^1 (1-x)^{\frac4\kappa -1}dx}{\int_0^1x^{\frac 8\kappa -1}(1-x)^{\frac4\kappa -1}dx }<\infty.$$
	Let $\tau_{[0,\infty)}$ be the first $t>0$ such that $\gamma(t)\in[0,\infty)$. The above formula implies that, for any
	$$\mu^1_{\HH;0^-_+}\Big[\sup_{0\le t\le \tau_{[0,\infty)}} q^+_t-q^-_t>1\Big]\le\mu^1_{\HH;0^-_+}(\Gamma_\tau)<\infty.$$
	Since $\diam(K_t)\le q^+_t-q^-_t$ for $t\le \tau_{[0,\infty)}$, we have $\mu^1_{\HH;0^-_+}[\diam(K_{\tau_{[0,\infty)}})>1]<\infty$. Since $\gamma$ either ends at $\tau_{[0,\infty)}$ (when $\kappa\in(0,4]$) or grows inside $K_{\tau_{[0,\infty)}}$ after $\tau_{[0,\infty)}$ (when $\kappa\in(4,8)$), we get $\diam(\gamma)=\diam(K_{\tau_{[0,\infty)}})$. Thus, $\mu^1_{\HH;0^-_+}[\diam(\gamma)>1]<\infty$.
	
	(vi) The  proofs of (vi) is almost the same as  the proof of Theorem \ref{Thm-loop-measure} (vii) except for the modifications described near the end of the proof of (i).
\end{proof}

\begin{Theorem}  Let   $\mu^1_{\HH;a^-_+}$ be as in the previous theorem.  Let $D\subset \HH$ be an open neighborhood of $\R\cup\{\infty\}$ in $\HH$.  Define
	$$\mu^1_{D;a^-_+}={\bf 1}_{\{\cdot\subset D\}}e^{\cc\mu^{\lloop}({\cal L}_{\HH}(\cdot,\HH\sem D))}\cdot \mu^1_{\HH;a^-_+},\quad a\in\R.$$
	Then $\mu^1_{D;a^-_+}$ satisfies the following conformal covariance.
If $W$ maps  $D$ conformally onto another domain $E$ with the same properties as $D$, and maps $a\in\R$ to $b\in\R$, then
		$$W(\mu^1_{D;a^-_+})=|W'(a)|^{\frac 8\kappa -1} \mu^1_{E;b^-_+}.$$ \label{Thm-subdomain-bubble}
\end{Theorem}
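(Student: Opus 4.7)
The plan is to mimic the proof of Theorem \ref{Thm-subdomain} with the half-plane boundary-rooted analogs of all objects: two-sided whole-plane SLE$_\kappa$ is replaced by degenerate two-sided radial SLE$_\kappa$ from $0^-$ to $0^+$ through $w$; whole-plane SLE$_\kappa(2)$ from $0$ to $\infty$ is replaced by chordal SLE$_\kappa(2)$ from $0^-$ to $\infty$ with force point $0^+$; the radial/whole-plane conformal distortion analysis of Section \ref{whole-kappa-rho} is replaced by its chordal counterpart; and the generalized restriction property of chordal SLE$_\kappa$ from the appendix (Proposition \ref{prop-mult} / Lemma \ref{lem-mult}) plays the same role it does in Theorem \ref{Thm-subdomain}. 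By translation we may assume $a=b=0$. Fix a Jordan arc $\rho\subset\lin\HH$ whose two endpoints lie on $\R$ and straddle $0$, enclosing $0$ in $\HH$, with both $\rho$ and $W(\rho)$ compactly contained in $D\cap W^{-1}(E)$; let $\tau_\rho$ and $\tau_{W(\rho)}$ be the corresponding hitting times.

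The first input, established inside the proof of Theorem \ref{Thm-loop-measure-d}(i), is
$$\K_{\tau_\rho}(\mu^1_{\HH;0^-_+}|_{\{\tau_\rho<\infty\}})=Q_\HH\cdot\K_{\tau_\rho}(\nu^\#_{\HH;0^-\to\infty;0^+}),\quad Q_\HH(\gamma_{\tau_\rho})=\ha c^{-1}(q^+_{\tau_\rho}-\lambda_{\tau_\rho})^{1-8/\kappa},$$
where $\lambda_t,q^+_t$ are the chordal Loewner driving function and force point of $\nu^\#_{\HH;0^-\to\infty;0^+}$. Combined with the CMP of $\mu^1_{\HH;0^-_+}$ (Theorem \ref{Thm-loop-measure-d}(ii)) and the Brownian-loop weighting that defines $\mu^1_{D;0^-_+}$, this writes
$$\mu^1_{D;0^-_+}|_{\{\tau_\rho<\infty\}}=Q_\HH\,e^{\cc\mu^{\lloop}(\mathcal{L}_\HH(\cdot,\HH\setminus D))}\cdot\K_{\tau_\rho}(\nu^\#_{\HH;0^-\to\infty;0^+})(d\gamma_{\tau_\rho})\oplus\mu^{D\setminus K_{\tau_\rho}}_{\HH\setminus K_{\tau_\rho};(\gamma_{\tau_\rho})_{\tip}\to 0^+}(d\gamma^{\tau_\rho}),$$
and analogously for $\mu^1_{E;0^-_+}|_{\{\tau_{W(\rho)}<\infty\}}$ in terms of the stopped $\nu^\#_{\HH;0^-\to\infty;0^+}$ and a chordal subdomain tail in $E\setminus L_{\tau_{W(\rho)}}$.

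The chordal tail from the tip back to $0^+$ transforms under $W$ (extended across $\R$ near $0$ by Schwarz reflection) by the generalized restriction property of chordal SLE$_\kappa$ in subdomains (Lemma \ref{lem-mult} / Proposition \ref{prop-mult}), producing boundary Radon--Nikodym factors $|W_{\tau_\rho}'(\lambda_{\tau_\rho})|^{(6-\kappa)/(2\kappa)}|W_{\tau_\rho}'(q^+_{\tau_\rho})|^{(6-\kappa)/(2\kappa)}$ at the tip and the image of the root together with a Brownian loop correction for the change of ambient domain. For the initial chordal SLE$_\kappa(2)$ piece I will derive the chordal analog of Lemma \ref{whole-conformal'} via a Girsanov computation on the chordal Loewner equation: write $W_t=h_{u(t)}\circ W\circ g_t^{-1}$ for the chordal Loewner maps $g_t,h_s$ with $u'(t)=W_t'(\lambda_t)^2$; compute the It\^o decomposition of the chordal analog of $N_t$ in (\ref{Nt'}) (with $\cot_2$ replaced by $2/(\cdot)$, $\sin_2$ by $\cdot/2$, and $q_t$ replaced by the force point satisfying $dq^+_t=2/(q^+_t-\lambda_t)dt$); divide out by the drift-killing exponential of the Schwarzian and $(W_t'(\lambda_t)^2-1)$ terms; and obtain a bounded martingale $M_t$ on $[0,\tau_\rho]$ whose value at $t=0$ is a fixed power of $|W'(0)|$ arising from the boundary expansion of $W_t$ at $(\lambda_0,q^+_0)=(0,0)$. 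A chordal/half-plane analog of Lemma \ref{lemma-loop}, using boundary Brownian bubbles in $\HH$, identifies the drift-killing exponential with $e^{\cc[\Lambda^*_\HH(\beta_{u(t)},E^c)-\Lambda^*_\HH(\gamma_t,D^c)]}$.

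Combining the two transformations, the internal factors $|W_{\tau_\rho}'(\lambda_{\tau_\rho})|$ and $|W_{\tau_\rho}'(q^+_{\tau_\rho})|$ cancel between the initial and tail pieces, the Brownian loop corrections telescope via the decomposition $\mathcal{L}_\HH(\cdot,\HH\setminus D)=\mathcal{L}_\HH(\cdot,\HH\setminus E)\sqcup\mathcal{L}_E(\cdot,E\setminus D)$ together with the $\Lambda^*_\HH$-difference identity, and the surviving boundary factors combine into $|W'(0)|^{8/\kappa-1}$. This yields
$$W(\mu^1_{D;0^-_+}|_{\{\tau_\rho<\infty\}})=|W'(0)|^{8/\kappa-1}\mu^1_{E;0^-_+}|_{\{\tau_{W(\rho)}<\infty\}};$$
shrinking $\rho$ to $0$ along a nested sequence of such arcs, and using that both measures are supported on non-degenerate loops rooted at $0$, completes the argument. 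The main obstacle will be the chordal Girsanov computation and the identification of $M_0$: in the whole-plane setting Lemma \ref{whole-conformal'} gave $M_{-\infty}=1$ because the driving and force points were separated on $\TT$ (with $X_t\in(0,2\pi)$), whereas here $\lambda_0=q^+_0=0$ and a careful joint boundary expansion of $W_t$ at the collided pair $(\lambda_t,q^+_t)$ is required to extract the correct power of $|W'(0)|$, together with verifying the boundary-bubble chordal version of Lemma \ref{lemma-loop} that matches the Schwarzian-plus-drift term to the normalized Brownian loop measure discrepancy at the boundary.
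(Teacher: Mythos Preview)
Your strategy is exactly the paper's: replace Lemma~\ref{whole-conformal'}, Lemma~\ref{lemma-loop}, and Lemma~\ref{lem-mult} by their chordal analogs (the paper's Lemma~\ref{chordal-conformal}, identity~(\ref{S-loop-chordal}), and Lemma~\ref{lem-mult'}) and rerun the argument of Theorem~\ref{Thm-subdomain}. Two small corrections. First, since $\HH$ is nonpolar, $\mu^{\lloop}({\cal L}_\HH(\cdot,\cdot))$ is already finite and no normalized $\Lambda^*$ is needed anywhere; the paper notes explicitly that the role of~(\ref{normalized-Brownian-equality}) is here played by an ordinary additivity of loop measures. Second, your decomposition ${\cal L}_\HH(\cdot,\HH\sem D)={\cal L}_\HH(\cdot,\HH\sem E)\sqcup{\cal L}_E(\cdot,E\sem D)$ is meaningless because $D$ and $E$ are the source and target of $W$, not nested domains. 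The actual mechanism is that~(\ref{S-loop-chordal}) identifies the Schwarzian integral appearing in $M_{\tau_\rho}$ with $\mu^{\lloop}({\cal L}_\HH(W(\gamma_{\tau_\rho}),\HH\sem E))-\mu^{\lloop}({\cal L}_\HH(\gamma_{\tau_\rho},\HH\sem D))$, and these two terms cancel directly against the restriction weightings defining $\mu^1_{D;0^-_+}$ and $\mu^1_{E;0^-_+}$; the separate splitting one does need is of ${\cal L}_\HH(\gamma,\HH\sem D)$ into loops meeting $\gamma_{\tau_\rho}$ and loops in $\HH\sem K_{\tau_\rho}$ meeting $\gamma^{\tau_\rho}$, which lets the initial and tail pieces be handled independently. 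Your concern about the degenerate force point is well placed and is exactly where the covariance exponent emerges: as $t\to0^+$ one has $|Y_t/X_t|\to|W'(0)|$ and $W_0'(\lambda_0)=W_0'(q_0^+)=W'(0)$, so $M_0=|W'(0)|^{(6-\kappa)/\kappa}\cdot|W'(0)|^{2/\kappa}=|W'(0)|^{8/\kappa-1}$.
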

\begin{proof}
  The proof is similar to that of Theorem \ref{Thm-subdomain} except that here we use  Lemma \ref{lem-mult'},  (\ref{S-loop-chordal}), and  Lemma \ref{chordal-conformal} below to replace Lemmas \ref{lem-mult}, \ref{lemma-loop}, and \ref{whole-conformal'} in the proof of Theorem \ref{Thm-subdomain}, and the role of (\ref{normalized-Brownian-equality}) is played by an equality of Brownian loop measures without normalization.
\end{proof}

\begin{Lemma}
  Let $a>a'>0$ be such that the circle $\{|z|=a\}$ separates $0$ from $\HH\sem U$. Let $\rho=\{|z|={a'}\}$. Let $\tau_\rho$ and $\tau_{W(\rho)}$ be the hitting time at $\rho$ and $W(\rho)$, respectively. Then
  $$\K_{\tau_{W(\rho)}}(\nu^\#_{\HH;0^-\to\infty;0^+})= W(M_{\tau_\rho}\cdot \K_{\tau_{\rho}}(\nu^\#_{\HH;0^-\to\infty;0^+})),$$
  where $(M_t)$ is a local martingale defined as follows.

  Suppose that $\gamma$ has the law $\nu^\#_{\HH;0^-\to\infty;0^+}$, i.e., is a chordal SLE$_\kappa(2)$ curve started from $0$ with force point $0^+$. Let $\lambda_t$ and $q_t$ be its driving function and force point process, respectively, and let $X_t=\lambda_t-q_t$. Let $(K_t)$  be the  chordal Loewner hulls  driven by $\lambda$. Let $L_t=W(K_t)$. Suppose that $g_{K_t}:(\HH\sem K_t;\infty)\conf(\HH;\infty)$ and $g_{L_t}:(\HH\sem L_t;\infty)\conf (\HH;\infty)$ behave like $z+o(1)$ as $z\to \infty$. Let $W_t=g_{L_t}\circ W\circ g_{K_t}^{-1}$, $\sigma_t=W_t(\lambda_t)$, $p_t=W_t(q_t)$, and $Y_t=\sigma_t-p_t$. Then $$M_t(\gamma)=W_t'(\lambda_t)^{\frac{6-\kappa}{2\kappa}}W_t'(q_t)^{\frac{6-\kappa}{2\kappa}}\Big|\frac{Y_t}{X_t}\Big|^{\frac 2\kappa} \exp\Big(-\frac{\cc}6 \int_0^t S(W_s)(\lambda_s)ds\Big).$$
  \label{chordal-conformal}
\end{Lemma}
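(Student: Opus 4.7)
The plan is to imitate the proof of Lemma \ref{whole-conformal'}, replacing the whole-plane/radial setup by the chordal one. The chordal computation is structurally simpler than its radial counterpart in Section \ref{whole-kappa-rho}: the substitutions $\cot_2(X_t)\leftrightarrow 2/X_t$ and $\cot_2(Y_{u(t)})\leftrightarrow 2/Y_t$ are pointwise (not asymptotic), and all the hyperbolic correction terms of the form $\til W_t'(\lambda_t)^2-1$ disappear. First I would derive the conformal distortion formulas by differentiating $W_t\circ g_{K_t}=g_{L_t}\circ W$ in $t$. Combined with the two chordal Loewner equations $\pa_tg_{K_t}(z)=2/(g_{K_t}(z)-\lambda_t)$ and $\pa_tg_{L_t}(z)=2u'(t)/(g_{L_t}(z)-\sigma_t)$, this gives the chordal analogue of (\ref{patWt'}):
\[
\pa_tW_t(w)=\frac{2u'(t)}{W_t(w)-\sigma_t}-\frac{2W_t'(w)}{w-\lambda_t},\qquad u'(t)=W_t'(\lambda_t)^2.
\]
Taking $w\to\lambda_t$ yields $\pa_tW_t(\lambda_t)=-3W_t''(\lambda_t)$; differentiating once in $w$ before the limit yields $\pa_tW_t'(\lambda_t)/W_t'(\lambda_t)=\tfrac12(W_t''/W_t')^2-\tfrac43(W_t'''/W_t')$; and evaluating at $w=q_t$ together with $dq_t=-(2/X_t)\,dt$ yields $dp_t=-2W_t'(\lambda_t)^2/Y_t\,dt$, the chordal counterparts of (\ref{-3'}), (\ref{1243'}), and (\ref{dpt'}).

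Second, I would apply It\^o's formula to each of the four factors of $N_t=W_t'(\lambda_t)^b\,W_t'(q_t)^b\,|Y_t|^{2/\kappa}\,|X_t|^{-2/\kappa}$ (with $b=(6-\kappa)/(2\kappa)$) under the chordal SLE$_\kappa(2)$ SDE's $d\lambda_t=\sqrt\kappa\,dB_t+(2/X_t)\,dt$ and $dX_t=\sqrt\kappa\,dB_t+(4/X_t)\,dt$. The four individual logarithmic derivatives are the chordal analogues of (\ref{sinX'}--\ref{W'q'}), read off by the substitutions above and by dropping the $(\til W_t'^2-1)$--type corrections. The delicate step is combining them: because $W_t'(\lambda_t)^b$, $|Y_t|^{2/\kappa}$, and $|X_t|^{-2/\kappa}$ each carry a $dB_t$-component, three cross quadratic covariations must be added, and only with those included do the remaining drifts involving $W_t''/(W_t'X_t)$, $W_t''/Y_t$, $W_t'/(X_tY_t)$, $1/X_t^2$, and $W_t'(\lambda_t)^2/Y_t^2$ cancel in pairs. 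What survives is
\[
\frac{dN_t}{N_t}=\Big[\frac{6-\kappa}{2}\frac{W_t''(\lambda_t)}{W_t'(\lambda_t)}+\frac{2W_t'(\lambda_t)}{Y_t}-\frac{2}{X_t}\Big]\frac{dB_t}{\sqrt\kappa}+\frac{\cc}{6}\,S(W_t)(\lambda_t)\,dt,
\]
so $M_t:=N_t\exp\bigl(-\tfrac{\cc}{6}\int_0^tS(W_s)(\lambda_s)\,ds\bigr)$ is a local martingale with $M_0=1$.

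Third, I would localize at $\tau_\rho$. By Schwarz reflection across $\R$ and Koebe distortion applied to the annular domain bounded by $\pa K_t$ and a slightly larger curve inside $U\sem\{W^{-1}(\infty)\}$, each of $W_t'(\lambda_t)$, $W_t'(q_t)$, $Y_t/(W_t'(\lambda_t)X_t)$, and $S(W_t)(\lambda_t)$ is uniformly bounded on $[0,\tau_\rho]$, so $M_{\cdot\wedge\tau_\rho}$ is a bounded martingale and $\EE[M_{\tau_\rho}]=1$. Girsanov then converts $B_t$ into a new Brownian motion under which, after the deterministic time change $s=u(t)$, the pair $(\sigma_s,p_s)$ satisfies the chordal SLE$_\kappa(2)$ driving SDE's in $V$ with force point $p_0=0^+$, up to the hitting time $u(\tau_\rho)=\tau_{W(\rho)}$ of $W(\rho)$ by $\beta(s):=W(\gamma(u^{-1}(s)))$, yielding the claimed identity. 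The main obstacle is carrying out the It\^o bookkeeping in the second step so that all cross-covariation contributions are accounted for before claiming cancellation; given that reduction, the Girsanov/time-change step is routine and mirrors the end of Section \ref{whole-kappa-rho}.
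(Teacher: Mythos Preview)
Your approach is correct and is exactly what the paper does: it says the lemma ``follows from a chordal version of the argument in the proof of Lemma~\ref{whole-conformal'}, which uses chordal Loewner equations, and is similar to the one used in the proof of Proposition~\ref{prop-mult}.'' Your derivation of the distortion formula for $\pa_tW_t$, the limits at $\lambda_t$ and $q_t$, the It\^o expansion of $N_t$, and the Girsanov/time-change conclusion all match the chordal transcription of Section~\ref{whole-kappa-rho} line by line, with the substitutions $\cot_2\leftrightarrow 2/(\cdot)$ and the removal of the $(\til W_t'^{\,2}-1)$ corrections, precisely as you describe.

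One small correction: your claim $M_0=1$ is not right in general. At $t=0$ we have $K_0=L_0=\emptyset$, so $W_0=W$, hence $W_0'(\lambda_0)=W_0'(q_0)=W'(0)$, and $Y_t/X_t=(W_t(\lambda_t)-W_t(q_t))/(\lambda_t-q_t)\to W'(0)$ as $t\downarrow0$. Therefore
\[
M_0=|W'(0)|^{\frac{6-\kappa}{\kappa}}\cdot|W'(0)|^{\frac{2}{\kappa}}=|W'(0)|^{\frac{8}{\kappa}-1}.
\]
This is not a structural gap: the bounded-martingale/Girsanov step gives $\K_{\tau_{W(\rho)}}(\nu^\#)=W\bigl((M_{\tau_\rho}/M_0)\cdot\K_{\tau_\rho}(\nu^\#)\bigr)$, and the extra constant $M_0=|W'(0)|^{8/\kappa-1}$ is exactly the boundary scaling exponent that appears in Theorem~\ref{Thm-subdomain-bubble}. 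You can either carry $M_0$ through explicitly, or (as in the proof of Proposition~\ref{prop-mult}) reduce WLOG to $W'(0)=1$ by a preliminary dilation and restore the factor at the end. Everything else in your outline is sound.
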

\begin{proof}
  This follows from a chordal version of the argument in the proof of Lemma \ref{whole-conformal'}, which uses chordal Loewner equations, and is similar to the one used in the proof of Proposition \ref{prop-mult}.  Here we use $Y_t$ instead of the $Y_{u(t)}$ as in (\ref{Nt'}) because we did not do a time-change on $(L_t)$.
\end{proof}


\begin{Remark}
For $\kappa\in(4,8)$, there is another way to construct the SLE$_\kappa$ bubble measure. The construction uses two-sided chordal SLE. Roughly speaking, a two-sided chordal SLE$_\kappa$ curve is a chordal SLE$_\kappa$ conditioned to pass through a fixed boundary point.
For $a\ne x\in\R$, the degenerate two-sided chordal SLE$_\kappa$ curve in $\HH$ from $a^-$ to $a^+$ passing through $x$ can be defined as the limit as $b\to a^+$ of a two-sided chordal SLE$_\kappa$ curve in $\HH$ from $a$ to $b$ passing through $x$.
The degenerate two-sided chordal SLE$_\kappa$ curve satisfies the reversibility as a two-sided whole-plane SLE$_\kappa$ curve does. \cite[Theorem 6.1]{decomposition} states that if we integrate the law of two-sided chordal SLE$_\kappa$ curves in $\HH$ from $0$ to $\infty$ passing through different $x\in\R$ against the measure ${\bf 1}_U\cdot\mA(dx)$, where $U$ is a compact subset of $\R\sem\{0\}$, we get a law, which is absolutely continuous w.r.t.\ that of a chordal SLE$_\kappa$ in $\HH$ from $0$ to $\infty$, and the Radon-Nikodym derivative may be described as the $(2-\frac 8\kappa)$-dimensional Minkowski content of the intersection of the curve with $U$. Here we use Lawler's result on the existence of the Minkowski content of the intersection of SLE$_\kappa$ curve with the domain boundary \cite{Law-real}, which was conjectured in \cite{AlbertsSheffield} and later solved  We may derive a theorem that is similar to Theorem \ref{Thm-loop-measure} except for the following modifications: the measure $\nu^\#_{z\rt w}$ should be replaced by $\nu^\#_{\HH;x^-_+\rt y^+_-}$, the law of a degenerate two-sided chordal SLE$_\kappa$ curve in $\HH$ from $x^-$ to $x^+$ passing through $y$; the function $G_{\C}(w-z)$ should be replaced by $G_{\HH}(y-x):=|x-y|^{-2(\frac 8\kappa -1)}$; the measure $\mA^2(dw)$ should be replaced by $\mA(dy)$; the $d$-dimensional Minkowski content $\Cont(\cdot)$ and Minkowski content measure ${\cal M}_\gamma$ of $\gamma$ should be replaced by the $(2-\frac 8\kappa)$-dimensional Minkowski content $\Cont_{2-\frac 8\kappa}(\cdot\cap \R)$ and Minkowski content measure  ${\cal M}^{(2-\frac 8\kappa) }_{\gamma\cap\R}$ of $\gamma\cap\R$; the measure  ${\cal M}^{(2-\frac 8\kappa) }_{\gamma\cap\R}$ is not parametrizable for the curve, so here we do not have a statement similar to Theorem \ref{Thm-loop-measure}  (iii); and  the exponents $d-2$ and $(d-2)/d$ in (vi) should be replaced by $1-\frac8\kappa$ and $(1-\frac8\kappa)/(2-\frac8\kappa)$, respectively. The statements on the CMP and uniqueness in this theorem and Theorem \ref{Thm-loop-measure-d} ensures that the bubble measure constructed in the two theorems are equal up to a multiplicative constant because of the uniqueness. Moreover, following the proof of Theorem \ref{Thm-loop-measure-unrooted}, we may construct an unrooted SLE$_\kappa$ bubble measure, which is invariant under M\"obius automorphisms of $\HH$. Following the proof of (\ref{mu0U}) in Theorem \ref{Thm-subdomain}, we can prove that this unrooted loop measure satisfies the generalized restriction property without the factor $|W'(a)|^{\frac 8\kappa -1}$ as in Theorem \ref{Thm-subdomain-bubble}. Then we may follow the argument after Theorem \ref{Thm-subdomain} to define unrooted SLE$_\kappa$ measure $\mu_{S;C}$ in any Riemann surface $S$ with a boundary component $C$, which is conformally invariant and satisfies the generalized restriction property.
\end{Remark}
\appendixpage
\addappheadtotoc
\appendix
\section{Chordal SLE in Multiply Connected Domains} \label{Appendix}
In the appendix, we review the definition of chordal SLE in multiply connected domains for $\kappa\in(0,8)$. First, we review hulls, Loewner chains and chordal Loewner equations, which define chordal SLE in simply connected domains. The reader is referred to \cite{Law1} for details.

A subset $K$ of $\HH$ is called an $\HH$-hull if it is bounded, relatively closed in $\HH$, and $\HH\sem K$ is simply connected. For every $\HH$-hull $K$, there is are a unique $c\ge 0$ and a unique $g_K:\HH\sem K\conf \HH$ such that $g_K(z)=z+\frac{c}z+O(\frac 1{z^2})$ as $z\to\infty$. The number $c$ is called the $\HH$-capacity of $K$, and is denoted by $\hcap(K)$.

 If $K_1\subset K_2$ are two $\HH$-hulls, we define $K_2/K_1=g_{K_1}(K_2\sem K_1)$. Then $K_2/ K_1$ is also an $\HH$-hull, and we have $\hcap(K_2/K_1)=\hcap(K_2)-\hcap(K_1)$.

 The following proposition is essentially Lemma 2.8 in \cite{LSW1}.
\begin{Proposition}
Let $W$ be a conformal map defined on a neighborhood of $x_0\in\R$ such that an open real interval containing $x_0$ is mapped into $\R$. Then
$$\lim_{H\to z_0} \frac{\hcap(W(H))}{\hcap(H)}=|W'(z_0)|^2,$$
where $H\to z_0$ means that $\diam (H\cup\{z_0\})\to 0$ with  $H$ being a nonempty $\HH$-hull. \label{chordal-hull}
\end{Proposition}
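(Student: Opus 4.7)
The plan is to reduce to a scale-invariant statement and then invoke continuity of $\hcap$ under small deformations of a hull. First I would normalize. Composing $W$ on the source side with the translation $z\mapsto z-x_0$ and on the target side with the translation $w\mapsto w-W(x_0)$, and (if $W$ maps a neighborhood of $x_0$ in $\HH$ to $-\HH$) post-composing with $w\mapsto -w$, I may assume $x_0=0$, $W(0)=0$, and $W'(0)=a>0$. Each of these maps preserves $\hcap$ up to a factor that is accounted for: $\hcap(K+b)=\hcap(K)$ and $\hcap(-K)=\hcap(K)$ by direct inspection of the Laurent expansion of $g_K$ at infinity. By the Schwarz reflection principle, $W$ extends to a conformal map on an $\R$-symmetric open neighborhood $U$ of $0$, and the remaining task is to prove
$$\lim_{H\to 0}\frac{\hcap(W(H))}{\hcap(H)}=a^2.$$

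Next I would perform a rescaling. For a small hull $H$, set $r=\diam(H\cup\{0\})$, $\tilde H=r^{-1}H$, and $\phi_r(z)=r^{-1}W(rz)$. Then $\tilde H$ is an $\HH$-hull with $\tilde H\subset\overline{\HH}\cap\overline{B(0,1)}$ and $\diam(\tilde H\cup\{0\})=1$, while $\phi_r$ is a conformal map defined on $r^{-1}U$ (which contains $\overline{B(0,2)}$ for $r$ small), satisfies $\phi_r(0)=0$ and $\phi_r'(0)=a$, and maps a real neighborhood of $0$ into $\R$. Writing $W(z)=az+O(z^2)$ gives $\phi_r(z)=az+O(r)$ uniformly on $\overline{B(0,2)}$, so $\phi_r\to az$ uniformly on compact subsets of $\C$ as $r\downarrow 0$. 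Combining this with the scaling covariance $\hcap(\lambda K)=\lambda^2\hcap(K)$ (an immediate consequence of $g_{\lambda K}(z)=\lambda g_K(z/\lambda)$) yields
$$\frac{\hcap(W(H))}{\hcap(H)}=\frac{r^2\hcap(\phi_r(\tilde H))}{r^2\hcap(\tilde H)}=\frac{\hcap(\phi_r(\tilde H))}{\hcap(\tilde H)},$$
so it suffices to show the right-hand side converges to $a^2$ uniformly over the family $\mathcal F$ of $\HH$-hulls $\tilde H\subset\overline{\HH}\cap\overline{B(0,1)}$ with $\diam(\tilde H\cup\{0\})=1$.

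For the core step, I would use that $\mathcal F$ is precompact in the Carath\'eodory topology on the complementary domains $\HH\sem\tilde H$ (with marked point $\infty$), a standard normal-families fact, and that $\hcap$ is continuous in this topology: the map $\tilde H\mapsto g_{\tilde H}$ is continuous for normal convergence near $\infty$, and $\hcap(\tilde H)$ is read off from the $1/z$-coefficient. Fixing any $\tilde H\in\mathcal F$, uniform convergence $\phi_r\to az$ on a neighborhood of $\tilde H$ gives Carath\'eodory convergence $\phi_r(\tilde H)\to a\tilde H$, hence $\hcap(\phi_r(\tilde H))\to \hcap(a\tilde H)=a^2\hcap(\tilde H)$. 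A compactness argument then upgrades pointwise convergence of the continuous functions $\tilde H\mapsto \hcap(\phi_r(\tilde H))$ to uniform convergence over $\mathcal F$.

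The main obstacle will be the uniformity when $\hcap(\tilde H)$ is small, which is compatible with $\diam(\tilde H\cup\{0\})=1$ (e.g.\ a thin hull lying close to $\R$). Continuity of $\hcap$ alone controls the numerator and denominator separately but not their ratio when the denominator is small. To handle this, I would use the harmonic-measure representation $\hcap(K)=\lim_{y\to\infty}y\,\mathbb E^{iy}[\Imm B_{\tau_{\HH\sem K}}]$ and push forward by $\phi_r$ using conformal invariance of Brownian motion: near $0$ the distortion of harmonic measure by $\phi_r$ is $|\phi_r'|^2=a^2(1+o(1))$ uniformly (since $\phi_r'\to a$ uniformly on a neighborhood of the unit disc), so the contribution to $\hcap$ from every piece of $\tilde H$ is scaled by $a^2(1+o(1))$, giving a multiplicative bound $\hcap(\phi_r(\tilde H))=a^2\hcap(\tilde H)(1+o(1))$ uniformly in $\tilde H\in\mathcal F$. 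This completes the proof.
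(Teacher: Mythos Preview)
The paper does not supply a proof of this proposition; it simply records that it ``is essentially Lemma 2.8 in \cite{LSW1}''. So there is no paper proof to compare against, and the question is whether your argument stands on its own.

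Your normalization and rescaling steps are clean and correct: reducing to $x_0=0$, $W(0)=0$, $W'(0)=a>0$, and then writing $\phi_r(z)=r^{-1}W(rz)\to az$ uniformly on compacts is exactly the right reduction. You also correctly identify the real obstacle, namely uniformity of the ratio over hulls $\tilde H\in\mathcal F$ with small $\hcap(\tilde H)$.

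The gap is in your last paragraph. The phrase ``push forward by $\phi_r$ using conformal invariance of Brownian motion'' does not apply as stated: the Brownian motion in the representation $\hcap(K)=\lim_{y\to\infty}y\,\mathbb E^{iy}[\Imm B_\tau]$ starts at $iy$ with $y$ large, far outside the domain of $\phi_r$, so you cannot transport the whole path. Moreover, harmonic measure is conformally \emph{invariant}, not distorted by $|\phi_r'|^2$; the factor $a^2$ must be traced to two separate sources --- one factor from $\Imm\phi_r(z)\approx a\,\Imm z$ on $\partial\tilde H$, and a second from how the normalized hitting probability from infinity scales --- and the second requires an argument (e.g.\ via the semicircle formula $\hcap(K)=\frac{2R}{\pi}\int_0^\pi \sin\theta\,\mathbb E^{Re^{i\theta}}[\Imm B_\sigma]\,d\theta$ with a fixed $R$, comparing the starting arc for $\tilde H$ and $\phi_r(\tilde H)$, and controlling the difference between $\phi_r$ and the linear map $az$ on that arc). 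Your sketch gestures at the right mechanism but does not supply this comparison; as written, the step ``so the contribution to $\hcap$ from every piece of $\tilde H$ is scaled by $a^2(1+o(1))$'' is an assertion, not an argument. Filling this in (or consulting the Brownian-motion estimate in \cite{LSW1}) would complete the proof.
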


Let $T\in(0,\infty]$ and $\lambda\in C([0,T),\R)$. The chordal Loewner equation driven by $\lambda$ is
\BGE \pa_t g_t(z)=\frac{2}{g_t(z)-\lambda_t},\quad 0\le t<T;\quad  g_0(z)=z.\label{chordal-eqn}\EDE
For each $z\in\C$, let $\tau_z$ be such that the maximal interval for $t\mapsto g_t(z)$ is $[0,\tau_z)$. Let $K_t=\{z\in\HH:\tau_z\le t\}$, i.e., the set of $z\in\HH$ such that $g_t(z)$ is not defined. Then $g_t$ and $K_t$, $0\le t<T$, are called the chordal Loewner maps and hulls driven by $\lambda$. It is known that $(K_t)$ is an increasing family of $\HH$-hulls with $\hcap(K_t)=2t$ and $g_t=g_{K_t}$ for $0\le t<T$. At $t=0$, $K_0=\emptyset$ and $g_0=\id$.

We say that $\lambda$ generates a chordal Loewner curve $\gamma$ if
$$\gamma(t):=\lim_{\HH\ni z\to \lambda(t)} g_t^{-1}(z)\in\lin{\HH}$$
exists for $0\le t<T$, and $\gamma$ is a continuous curve. We call such $\gamma$ the chordal Loewner curve driven by $\lambda$. If the such $\gamma$ exists, then for each $t$, $\HH\sem K_t$ is the unbounded component of $\HH\sem \gamma([0,t])$, and $g_t^{-1}$ extends continuously from $\HH$ to $\HH\cup\R$.
Since $\hcap(K_t)=2t$ for all $t$, we say that $\gamma$ is parametrized by half-plane capacity.

Another way to characterize the chordal Loewner hulls $(K_t)$ is using the notation of $\HH$-Loewner chain. A family of $\HH$-hulls: $K_t$, $0\le t<T$, is called an $\HH$-Loewner chain if
\begin{enumerate}
  \item $K_0=\emptyset$ and $K_{t_1}\subsetneqq K_{t_2}$ whenever $0\le t_1<t_2<T$;
  \item for any fixed $a\in[0,T)$, the diameter of $K_{t+\eps}/ K_t$ tends to $0$ as $\eps\to 0^+$, uniformly in $t\in[0,a]$.
\end{enumerate}
An $\HH$-Loewner chain $(K_t)$ is said to be normalized if $\hcap(K_t)=2t$  for each $t$. The following proposition is a result in \cite{LSW1}.
\begin{Proposition}
Let $T\in(0,\infty]$.  The following are equivalent.
\begin{enumerate}
    \item [(i)] $K_t$, $0\le t<T$, are chordal  Loewner hulls driven by some $\lambda\in C([0,T))$.
    \item [(ii)] $K_t$, $0\le t<T$, is a normalized $\HH$-Loewner chain.
\end{enumerate}
If either of the above holds,   we have
\BGE  \{ \lambda(t)\}=\bigcap_{\eps>0}\overline{K_{t+\eps}/K_t}, \quad 0\le t<T.  \EDE
If $K_t$, $0\le t<T$, is any $\HH$-Loewner chain, then the function $u(t):=\hcap(K_t)/2$, $0\le t<T$, is continuous and strictly increasing with $u(0)=0$, which implies that $K_{u^{-1}(t)}$, $0\le t<u(T)$, is a normalized $\HH$-Loewner chain. \label{Loewner-eqn-chain}
\end{Proposition}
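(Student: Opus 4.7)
The plan is to prove the equivalence by establishing each implication, then deducing the identification of $\lambda(t)$ and the properties of $u(t)$ simultaneously. I will rely only on capacity-diameter estimates (of the type encoded in Proposition \ref{chordal-hull}) and on the continuity of the conformal maps $g_K$ in their hulls.

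For (i)$\Rightarrow$(ii): given $\lambda \in C([0,T),\R)$ and the resulting $(K_t,g_t)$, the asymptotic $g_t(z) = z + 2t/z + O(1/z^2)$ is immediate by integrating (\ref{chordal-eqn}) at infinity, giving $\hcap(K_t)=2t$. Strict monotonicity is clear because for $s<t$, the hull $K_t/K_s = g_s(K_t\setminus K_s)$ has $\HH$-capacity $2(t-s)>0$ and is hence nonempty. For the uniform shrinking, I would invoke the shift property of the Loewner equation: $K_{t+\eps}/K_t$ is itself the hull generated by the driver $s \mapsto \lambda(t+s)$ on $[0,\eps]$, so by standard estimates (a hull of capacity $2\eps$ generated by a driver oscillating by at most $\omega$ sits inside a disk of radius $O(\sqrt\eps + \omega)$ around $\lambda(t)$). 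Uniform continuity of $\lambda$ on $[0,a]$ then yields the required uniform control.

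For (ii)$\Rightarrow$(i): given the normalized chain, define $\lambda(t) := \bigcap_{\eps>0} \overline{K_{t+\eps}/K_t}$. By hypothesis $\diam(K_{t+\eps}/K_t) \to 0$ as $\eps \downarrow 0$, so this intersection is a single point in $\lin\HH$; and since $K_{t+\eps}/K_t$ has its closure touching $\R$ (its $\hcap$ is $2\eps>0$, forcing boundary contact with $\R$), the limit point lies on $\R$. Uniform shrinking on compact $t$-intervals gives continuity of $\lambda$. To recover the ODE, fix $t_0$ and $z \in \HH\setminus K_{t_0}$. Using the factorization $g_{t_0+\eps} = g_{K_{t_0+\eps}/K_{t_0}} \circ g_{t_0}$ and the standard expansion of the conformal map of a small $\HH$-hull $H$ near a real point $x$, namely $g_H(w) = w + \hcap(H)/(w-x) + o(\hcap(H))$ as $H\to x$ (uniformly for $w$ away from $x$), one obtains
\begin{equation*}
g_{t_0+\eps}(z) - g_{t_0}(z) = \frac{2\eps}{g_{t_0}(z) - \lambda(t_0)} + o(\eps),
\end{equation*}
so dividing by $\eps$ and letting $\eps\downarrow 0$ gives the right-derivative form of (\ref{chordal-eqn}); continuity of $\lambda$ promotes this to the full ODE.

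For the last statement, let $(K_t)$ be any (not necessarily normalized) $\HH$-Loewner chain and set $u(t) = \hcap(K_t)/2$. Strict monotonicity follows from the argument above applied to any $s<t$: $K_t/K_s$ is a nonempty $\HH$-hull, so $u(t)-u(s) = \hcap(K_t/K_s)/2 > 0$. For continuity, note that $\diam(K_{t+\eps}/K_t)\to 0$ together with a standard capacity bound of the form $\hcap(H) \le C \diam(H)^2$ (a consequence of the half-plane capacity being controlled by the radius of the smallest disc containing $H$) yields $u(t+\eps)-u(t)\to 0$; left-continuity is handled symmetrically using $K_t/K_{t-\eps}$.

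The main obstacle is the asymptotic $g_H(w) = w + \hcap(H)/(w-x) + o(\hcap(H))$ for a small hull $H$ shrinking to $x\in\R$, which drives the recovery of the Loewner ODE in (ii)$\Rightarrow$(i). This is exactly the type of content behind Proposition \ref{chordal-hull}, and once it is established uniformly for $w$ on compact subsets of $\HH\setminus\{x\}$, the rest of the argument is bookkeeping.
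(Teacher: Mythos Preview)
The paper does not supply its own proof of this proposition; it records it as a known result from \cite{LSW1}. Your sketch is correct and is essentially the standard argument one finds there (and in \cite{Law1}): the small-hull expansion $g_H(w)=w+\hcap(H)/(w-x)+o(\hcap(H))$ for $H\to x\in\R$ is precisely \cite[Lemma 2.8]{LSW1}, and once that is in hand the recovery of the ODE, the identification $\{\lambda(t)\}=\bigcap_{\eps>0}\overline{K_{t+\eps}/K_t}$, and the capacity--diameter bound $\hcap(H)\le C\,\diam(H)^2$ for the continuity of $u$ are routine. One small point worth making explicit in your write-up: to upgrade the right-derivative of $g_t(z)$ to the full ODE you should first observe that $t\mapsto g_t(z)$ is locally Lipschitz (from the same expansion), hence absolutely continuous, so that continuity of the right-hand side $2/(g_t(z)-\lambda(t))$ then gives $C^1$; your phrase ``continuity of $\lambda$ promotes this to the full ODE'' hides this step.
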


For $\kappa>0$, chordal  SLE$_\kappa$ is defined by solving the chordal   Loewner equation with $\lambda(t)=\sqrt\kappa B(t)$, where $B(t)$ is a Brownian motion. The chordal Loewner curve $\gamma$ driven by this driving function a.s.\ exists, and satisfies $\lim_{t\to\infty}\gamma(t)=\infty$. So it is called a chordal SLE$_\kappa$ curve in $\HH$ from $0$ to $\infty$. It satisfies that, if $\kappa\in(0,4]$, $\gamma$ is simple, and $K_t=\gamma((0,t])$;  if $\kappa\ge 8$, $\gamma$ is space-filling, i.e., visits every points in $\lin\HH$; if $\kappa\in(4,8)$, $\gamma$ is neither simple nor space-filling, and every bounded subset of $\lin\HH$ is contained in  $K_t$ for some finite $t> 0$.

Via conformal maps, we may define SLE$_\kappa$ curve in any simply connected domain $D$ from one prime end $a$ to another prime end $b$. Recall that we use $\mu^\#_{D;a\to b}$ to denote the law of such curve (modulo a time change).

Now we review the definition of chordal SLE in multiply connected domains in \cite{Law-mult}. The laws of such SLE are no longer probability measures, but finite or $\sigma$-finite measures.
We will use the following notation. Suppose $D$ is a simply connected domain with two distinct prime ends $a$ and $b$. Let $U\subset D$ be an open neighborhood of both $a$ and $b$ in $D$. We define
\BGE \mu^D_{U;a\to b}={\bf 1}_{\{\cdot\cap (D \sem  U)=\emptyset\}}e^{\cc \mu^{lp}({\cal L}_{D}(\cdot, D\sem  U))}\cdot \mu^\#_{D;a\to b}.\label{chordal-BL}\EDE

\begin{Proposition}
  Let $  U$ and $  V$ be open neighborhoods of $\R\cup\{\infty\}$ in $\HH$. Suppose $  W:  U\conf V$ extends conformally across $\R\cup\{\infty\}$ such that $W(\R)=\R$ and $W(\infty)=\infty$.  Then for any $x\in\R$,
  $$\mu^{\HH}_{  V;  W(x)\to \infty}=|  W'(x)\cdot  W'(\infty) |^{-\frac{6-\kappa}{2\kappa}}   W(\mu^{\HH}_{  U;x\to \infty}),$$
 where $  W'(\infty):=(J\circ   W\circ J)'(0)$ with $J(z):=-1/z$.  \label{prop-mult}
\end{Proposition}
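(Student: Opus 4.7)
The plan is to run the standard chordal restriction-martingale argument with central charge $\cc$, i.e.\ the chordal analog of Lemma \ref{whole-conformal'}. Relative to the whole-plane computation in Section \ref{whole-kappa-rho}, the driving has no force point, the chordal Loewner equation has no asymptotic normalization at $\infty$ (so the $-\tfrac{1}{6}(W_t'^2-1)$ drift drops out), and the Brownian loop measure enters without normalization; the only genuinely new ingredient beyond the $V=\HH$ case already treated in the literature is tracking the hydrodynamic asymptotics of $W$ at $\infty$, which is where the factor $W'(\infty)^{-h}$ comes from.

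\textbf{Setup and local martingale.} Let $\gamma$ follow $\mu^\#_{\HH;x\to\infty}$ parametrized by half-plane capacity, with driving $\lambda_t$, hulls $K_t$, and exit time $\tau_U$ from $U$. Schwarz reflection makes $L_t:=W(K_t)$ an $\HH$-hull for $t<\tau_U$; set $u(t):=\hcap(L_t)/2$, $h_s:=g_{L_{u^{-1}(s)}}$, and $W_t:=h_{u(t)}\circ W\circ g_{K_t}^{-1}$, which extends conformally across $\R$. By Proposition \ref{chordal-hull} and the chordal analog of the argument leading to (\ref{W(lambda)'})--(\ref{u''}), one has $u'(t)=W_t'(\lambda_t)^2$ and $\sigma_{u(t)}=W_t(\lambda_t)$, where $\sigma_s$ is the chordal driving of $\beta:=W\circ\gamma\circ u^{-1}$. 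Expanding $\pa_t W_t(z)=W_t'(\lambda_t)^2\tfrac{2}{W_t(z)-W_t(\lambda_t)}-W_t'(z)\tfrac{2}{z-\lambda_t}$ at $z=\lambda_t$ exactly as in (\ref{-3'}) and (\ref{1243'}) and applying It\^o's formula with $d\lambda_t=\sqrt\kappa\,dB_t$ gives the chordal counterpart of (\ref{W'lambda'}): $\tfrac{d\,W_t'(\lambda_t)^{h}}{W_t'(\lambda_t)^{h}}=h\tfrac{W_t''(\lambda_t)}{W_t'(\lambda_t)}\sqrt\kappa\,dB_t+\tfrac{\cc}{6}S(W_t)(\lambda_t)\,dt$ with $h=\tfrac{6-\kappa}{2\kappa}$. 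Consequently
\begin{equation*}
M_t:=W_t'(\lambda_t)^{h}\exp\!\Big(-\tfrac{\cc}{6}\int_0^t S(W_s)(\lambda_s)\,ds\Big),\qquad 0\le t<\tau_U,
\end{equation*}
is a continuous local martingale with $M_0=W'(x)^h$; Koebe distortion applied to the doubly connected neighborhood of $\R$ between $K_t$ and $\pa U\cap\HH$ makes $M_t$ a bounded true martingale when stopped before any given compact subset of $\HH\sem U$ is hit.

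\textbf{Girsanov, loop identity, and the limit.} Girsanov exactly as in the passage from (\ref{dMt'}) to (\ref{dYutau}) shows that weighting $\mu^\#_{\HH;x\to\infty}$ by $M_t/M_0$ and pushing forward by $W$ (composed with the time change $u$) turns $\sigma_s$ into a standard $\sqrt\kappa$-Brownian motion, so the image is $\mu^\#_{\HH;W(x)\to\infty}$ stopped at the image time. The Brownian-bubble computation of Lemma \ref{lemma-loop}, adapted to chordal Loewner and using that $\mu^{\lloop}({\cal L}_\HH(V_1,V_2))<\infty$ for disjoint closed-compact $V_1,V_2\subset\HH$ (so no normalization $\Lambda^*$ is required), yields $\tfrac{1}{6}\int_0^tS(W_s)(\lambda_s)\,ds=\mu^{\lloop}({\cal L}_\HH(\beta_{u(t)},\HH\sem V))-\mu^{\lloop}({\cal L}_\HH(\gamma_t,\HH\sem U))$. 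Let $t\to\infty$ on $\{\gamma\subset U\}$: the two loop terms converge to $\mu^{\lloop}({\cal L}_\HH(\beta,\HH\sem V))$ and $\mu^{\lloop}({\cal L}_\HH(\gamma,\HH\sem U))$, and $W_t'(\lambda_t)^h\to W'(\infty)^{-h}$ because the hydrodynamic normalizations of $g_{K_t}^{-1}$ and $h_s$ pin $W_t$ to the expansion $W_t(z)=W'(\infty)^{-1}z+O(1)$ near $\infty$ while the tip $\gamma(t)\to\infty$ on $\{\gamma\subset U\}$ places $\lambda_t$ into that asymptotic regime (Koebe distortion on enlarging discs around $\lambda_t$ makes this rigorous). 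Combining these ingredients produces $M_0/M_\infty=(W'(x)W'(\infty))^{h}e^{-\cc[\mu^{\lloop}({\cal L}_\HH(\gamma,\HH\sem U))-\mu^{\lloop}({\cal L}_\HH(\beta,\HH\sem V))]}$. Substituting into the Girsanov identity $W(\mathbf 1_{\{\gamma\subset U\}}\mu^\#_{\HH;x\to\infty})=(M_0/M_\infty)\circ W^{-1}\cdot\mathbf 1_{\{\cdot\subset V\}}\mu^\#_{\HH;W(x)\to\infty}$, then multiplying by $e^{\cc\mu^{\lloop}({\cal L}_\HH(\cdot,\HH\sem U))}$ on the left and $e^{\cc\mu^{\lloop}({\cal L}_\HH(\cdot,\HH\sem V))}$ on the right, gives exactly $\mu^\HH_{V;W(x)\to\infty}=|W'(x)W'(\infty)|^{-h}\,W(\mu^\HH_{U;x\to\infty})$.

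\textbf{Main obstacle.} The delicate step is justifying $W_t'(\lambda_t)\to W'(\infty)^{-1}$ as $t\to\infty$ on $\{\gamma\subset U\}$: because $\lambda_t$ sits at the tip-singularity of $g_{K_t}^{-1}$, the hydrodynamic expansion of $W_t$ at $\infty$ cannot be evaluated at $\lambda_t$ directly. A clean workaround is to prove the identity first after stopping $\gamma$ at the first hit of $\{|z-x|=R\}$ (where both sides are bounded-martingale statements on a compact set) and then send $R\to\infty$ via dominated convergence, using the hcap asymptotics $e^{u(t)-t}\to W'(\infty)^{-2}$, the chordal analog of (\ref{u-infty'}), which is controlled by Koebe's $1/4$ theorem and the hydrodynamic expansion of $W$ at $\infty$.
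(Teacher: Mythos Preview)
Your framework (the local martingale $M_t=W_t'(\lambda_t)^h\exp(-\tfrac{\cc}{6}\int_0^t S(W_s)(\lambda_s)\,ds)$, Girsanov, and the Brownian-bubble identity for $\int S(W_s)(\lambda_s)\,ds$) is exactly the paper's. The paper also first reduces to $x=W(x)=0$ and, by the scaling $\mu^\HH_{aV;0\to\infty}=P_a(\mu^\HH_{V;0\to\infty})$, to $W'(\infty)=1$; and it cites \cite{Law-mult} for $\kappa\in(0,4]$, only proving the case $\kappa\in(4,8)$ here.

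The genuine gap is your handling of $\lim_{t\to\infty}W_t'(\lambda_t)$ on $\{\gamma\subset U\}$, which you yourself flag. Your heuristic---that the hydrodynamic expansion $W_t(z)=W'(\infty)^{-1}z+O(1)$ near $\infty$ controls $W_t'(\lambda_t)$ because ``$\gamma(t)\to\infty$ places $\lambda_t$ into that asymptotic regime''---does not work: $\lambda_t=\sqrt\kappa B_t$ stays finite, and the expansion at $\infty$ says nothing about $W_t'$ at a finite point. Your proposed workaround also imports the whole-plane relation $e^{u(t)-t}\to W'(\infty)^{-2}$, which is the wrong scaling for half-plane capacity (hcap is additive, not logarithmic), so that step does not go through either.

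The paper's resolution for $\kappa\in(4,8)$ is a one-line swallowing argument you are missing: since the curve is non-simple, on $\{\gamma\subset U\}$ the compact set $\HH\sem U$ is swallowed by some finite time $t_0$, so for $t\ge t_0$ one has $U\sem K_t=\HH\sem K_t$, hence $U_t=g_t(U\sem K_t)=\HH$ and likewise $V_{u(t)}=\HH$. Then $W_t:(\HH;\infty)\conf(\HH;\infty)$ with $W_t'(\infty)=1$ (from the hydrodynamic normalizations), so $W_t$ is a translation and $W_t'(\lambda_t)=1$ \emph{exactly} for all $t\ge t_0$. This immediately gives $M_\infty=e^{\cc[\mu^{\lloop}({\cal L}_\HH(\gamma,\HH\sem U))-\mu^{\lloop}({\cal L}_\HH(W(\gamma),\HH\sem V))]}$ and closes the argument. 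For $\kappa\le4$ (where there is no swallowing) the paper defers to \cite{Law-mult} and remarks that the limit ``requires a little bit more work''; your asymptotic sketch does not supply that work either.
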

\begin{proof}
This proposition was proved in \cite[Section 4.1]{Law-mult} for $\kappa\in(0,4]$ by considering simply connected subdomains of $U$. In this proof, we assume that $\kappa\in(4,8)$. The proof is similar to those of Theorem \ref{Thm-subdomain} and Lemma \ref{lemma-loop}, and uses a standard argument that originated in \cite{LSW-8/3}. WLOG, we may assume that $x=0$ and $W(0)=0$. Let $P_a$ denote the multiplication map $z\mapsto az$. By conformal invariance of chordal SLE and Brownian loop measure, we know that $\mu^{\HH}_{ P_a(V);  0\to \infty}={P}_a(\mu^{\HH}_{  V;  0\to \infty})$ for any $a>0$. Since $(aW)'(0)\cdot (aW)'(\infty)=W'(0)\cdot  W'(\infty)$, we may assume that $W'(\infty)=1$ by replacing $W$ with $aW$ for some $a>0$.

Let $\gamma$ be a chordal SLE$_\kappa$ curve in $\HH$ from $0$ to $\infty$ with driving function $\lambda_t=\sqrt\kappa B_t$. Let $g_t$ and $K_t$, $0\le t<\infty$, be the chordal Loewner maps and hulls, respectively, driven by $\lambda$.

  Let $\tau_U$ be the first time that $\gamma$ exits $U$. Then $\beta(t):=W(\gamma(t))$ is well defined for $0\le t<\tau_U$. For each $0\le t<\tau_U$, let $L_t$ be the $\HH$-hull such that $\HH\sem L_t$ is the unbounded connected component of $\HH\sem \beta([0,t])$. If $K_t\subset U$, then $L_t=W(K_t)$. Since $\kappa\in(4,8)$, $K_t$ may swallow some relatively clospen subset of $\HH\sem U$ before the time $\tau_U$, and $W(K_t)$ is not defined at that time. Using the conformal invariance of extremal length, we can see that $(L_t)$ is an $\HH$-Loewner chain (even after $K_t$ intersects $\HH\sem U$). From Proposition \ref{Loewner-eqn-chain}, we may reparametrize the family $(L_t)$ using the function
  $u(t)=\hcap(L_t)/2$ to get a family of chordal Loewner hulls. Let $\sigma_s$, $0\le s<S:=u(\tau_U)$, be the driving function for the normalized $(L_s)$. Let $h_s$, $0\le s<S$, be the corresponding chordal Loewner maps.
  We also  reparametrize $\beta$ using $u$. Then $\beta$ is the chordal Loewner curve driven by $\sigma$, and $\beta_{u(t)}=W(\gamma(t))$, $0\le t<\tau_U$.

For $0\le t<\tau_U$, define $  U_t=g_t(  U\sem K_t)$, $  V_t=h_{u(t)}(  V\sem L_{u(t)})$, and $W_t=h_{u(t)}\circ W\circ g_t^{-1}$. Then $  U_t$ and $  V_t$ are open neighborhoods of $\R\cup\{\infty\}$ in $\HH$, $  W_t: U_t\conf V_t$, and satisfies that, if $z\in  U_t$ tends to $\R$ or $\infty$, then $  W_t$ tends to $\R$ or $\infty$, respectively. By Schwarz reflection principle, $  W_t$ extends conformally across $\R$, and maps $\R$ onto $\R$. Since $W,g_t,h_{u(t)}$ all fix $\infty$, and have derivative $1$ at $\infty$, $W_t$ also satisfies this property.

By the continuity of $ g_t$ and $ h_{u(t)}$ in $t$ and the maximal principle, we know that the extended  $  W_t$ is continuous in $t$ (and $z$). Fix $0\le t<\tau_U$. Let $\eps\in(0,\tau_U-t)$.  Now $K_{t+\eps}/K_t$ is an $\HH$-hull with $\HH$-capacity being $2\eps$; and $L_{u(t+\eps)}/ L_{u(t)}$ is an $\HH$-hull  with $\HH$-capacity being $2u(t+\eps)-2u(t)$. Since $  W_t(K_{t+\eps}/K_t)=L_{u(t+\eps)}/ L_{u(t)}$, using Propositions \ref{chordal-hull} and \ref{Loewner-eqn-chain}, we get
\BGE \sigma_{u(t)}=  W_t(\lambda_t),
 \label{W(lambda)-chordal}\EDE
and $u_+'(t)=   W_t'({\lambda_t})^2$.
Using the continuity of $W_t$ in $t$, we get
\BGE u'(t)=  W_t'(\lambda_t)^2.\label{u'-chordal}\EDE
 Thus, $  h_{u(t)}$ satisfies the equation
\BGE \pa_t h_{u(t)}(z)=  \frac{2   W_t'(\lambda_t)^2}{ h_{u(t)}(z)-\sigma_{u(t)}}.\label{pahut-chordal}\EDE

From the definition of $  W_t$, we get the equality
\BGE   W_t\circ  g_t(z)=h_{u(t)}\circ   W(z), \quad z \in   U\sem K_t.\label{circ-chordal}\EDE
Differentiating this equality w.r.t.\ $t$ and using (\ref{chordal-eqn},\ref{pahut-chordal}), we get
$$\pa_t  W_t( g_t(z))+\frac{2  W_t'( g_t(z))}{ g_t(z)-\lambda_t}=\frac{2  W_t'(\lambda_t)^2}{ h_{u(t)}\circ   W(z)-\sigma_{u(t)}},\quad z\in U\sem K_t.$$
Combining this formula with (\ref{W(lambda)-chordal},\ref{circ-chordal}) and replacing $g_t(z)$ with $w$, we get
\BGE \pa_t  W_t(w)=\frac{2   W_t'(\lambda_t)^2}{  W_t(w)-  W_t(\lambda_t)}-\frac{2  W_t'(w)}{w-\lambda_t},\quad w\in   U_t.
\label{patWt-chordal}\EDE
Letting $  U_t\ni w\to \lambda_t$ in (\ref{patWt-chordal}), we get
\BGE \pa_t  W_t(\lambda_t)=-3  W_t''(\lambda_t).\label{-3-chordal}\EDE
Differentiating (\ref{patWt-chordal}) w.r.t.\ $w$ and letting  $ U_t\ni w\to \lambda_t$, we get
\BGE \frac{\pa_t  W_t'(\lambda_t)}{  W_t'(\lambda_t)}=\frac 12\Big(\frac{  W_t''(\lambda_t)}{  W_t'(\lambda_t)}\Big)^2-\frac 43\frac{  W_t'''(\lambda_t)}{  W_t'(\lambda_t)}.\label{1243-chordal}\EDE

Combining (\ref{W(lambda)-chordal},\ref{-3-chordal}), and using It\^o's formula and that $\lambda_t=\sqrt\kappa B_t$, we see that $\sigma_{u(t)}$ satisfies the SDE
\begin{align}
   d\sigma_{u(t)}=    W_t'(\lambda_t)\sqrt\kappa dB_t    +\Big(\frac\kappa 2-3\Big)   W_t''(\lambda_t)dt.\label{dYtau-chordal}
\end{align}
Combining (\ref{1243-chordal}) with $\lambda_t=\sqrt{\kappa}B_t$ and using It\^o's formula, we get
\begin{align}
  \frac{d  W_t'(\lambda_t)}{  W_t'(\lambda_t)}=\frac{  W_t''(\lambda_t)}{  W_t'(\lambda_t)}\sqrt\kappa dB_{t}
  +\frac 12\Big(\frac{  W_t''(\lambda_t)}{  W_t'(\lambda_t)}\Big)^2dt+\Big(\frac \kappa 2-\frac 43\Big)\frac{  W_t'''(\lambda_t)}{  W_t'(\lambda_t)}\,dt . \label{dW'-chordal}
\end{align}

Let $(Sf)(z)=\frac{f'''(z)}{f'(z)}-\frac 32(\frac{f''(z)}{f'(z)})^2$ be the Schwarzian derivative of $f$. Using (\ref{dW'-chordal}) and It\^o's formula, we see that
\begin{align}
  \frac{d   W_t'(\lambda_t)^{\frac{6-\kappa}{2\kappa}}}{    W_t'(\lambda_t)^{\frac{6-\kappa}{2\kappa}}}=\frac{6-\kappa}{2 } \frac{  W_t''(\lambda_t)}{  W_t'(\lambda_t)} \frac{dB_t}{\sqrt\kappa}   +\frac{\cc}6 S( W_t)(\lambda_t)dt.\label{W'lambda-chordal}\
\end{align}
So we get the following positive continuous local martingale
\BGE M_t:=W_t'(\lambda_t)^{\frac{6-\kappa}{2\kappa}}\exp\Big(-\int_0^t \frac{\cc}6 S( W_s)(\lambda_s)ds\Big), \label{Mt-chordal}\EDE
which satisfies the SDE
\BGE  \frac{d M_t}{  M_t}= \frac{6-\kappa}{2 } \frac{  W_t''(\lambda_t)}{  W_t'(\lambda_t)} \frac{dB_t}{\sqrt\kappa},\quad 0\le t<\tau_U.\label{dMt-chordal}\EDE

We claim that the following equality holds: for any $0\le T<\tau_U$,
\BGE  \int_{0}^T \frac 16 S(W_t)(\lambda_t)dt=\mu^{\lloop}({\cal L}_{\HH}(\beta([0,u(T)],\HH\sem V))-\mu^{\lloop}({\cal L}_{\HH}(\gamma([0,T]), \HH\sem U)).\label{S-loop-chordal}\EDE
Note that this is similar to Lemma \ref{lemma-loop}.
To prove (\ref{S-loop-chordal}), we use the Brownian bubble analysis of Brownian loop measure. Let $\mu^{\bub}_{ {x_0}}$ denote the Brownian bubble measure in $\HH$ rooted at $x_0\in\R$ as defined in \cite{loop}, from which we know, for any $0\le T<\tau_U$,
\begin{align}
   \mu^{\lloop}({\cal L}_{\HH}(\gamma([0,T]), \HH\sem U))=&\int_{0}^T \mu^{\bub}_{\lambda_t}({\cal L}(\HH\sem U_t))dt;\label{loop-bubbleU-chordal}\\
    \mu^{\lloop}({\cal L}_{\HH}(\beta([0,u(T)],\HH\sem V))=&\int_{0}^{u(T)} \mu^{\bub}_{\sigma_s}({\cal L}(\HH\sem V_s))ds \nonumber \\
     =&\int_{0}^T   W_t'(\lambda_t)^2 \mu^{\bub}_{\sigma_{u(t)}}({\cal L}(\HH\sem V_{u(t)}))dt.\label{loop-bubbleV-chordal}
\end{align}

If $U^*$ is a subdomain of $\HH$ that contains a neighborhood of $\R\cup\{\infty\}$ in $\HH$, we let $P^{U^*}_{x_0}$ denote the Poisson kernel in $U^*$ with the pole at $x_0\in\R$. Especially, $P^{\HH}_{x_0}(z)=\Imm \frac{-1/\pi }{z-x_0}$. From \cite{loop} we know
\begin{align*}
  \mu^{\bub}_{{\lambda_t}}({\cal L}({\HH\sem U_t})) =\lim_{U_t\ni z\to {\lambda_t}}\frac{1}{|z-{\lambda_t}|^2}\Big(1-\frac{P^{U_t}_{\lambda_t}(z)}{P^{\HH}_{\lambda_t}(z)}\Big)
\end{align*}
Similarly, using (\ref{W(lambda)-chordal}) and that $W_t:U_t\conf V_{u(t)}$, we get
\begin{align*}
  \mu^{\bub}_{\sigma_{u(t)}}({\cal L}({\HH\sem V_{u(t)}}))&= \lim_{V_{u(t)}\ni w\to {\sigma_{u(t)}}}\frac{1}{|w-{\sigma_{u(t)}}|^2}\Big(1-\frac{P^{V_{u(t)}}_{\sigma_{u(t)}}(w)}{ P^{\HH}_{\sigma_{u(t)}}(w)}\Big)\\
  &= \lim_{U_{t}\ni z\to {\lambda_t}} \frac{1}{| W_t(z) - W_t(\lambda_t)|^2}\Big(1-\frac{ P^{V_{u(t)}}_{\sigma_{u(t)}}\circ W_t(z)}{P^{\HH}_{\sigma_{u(t)}}\circ  W_t(z)}\Big)\\
  &= \lim_{ U_{t}\ni z\to {\lambda_t}} \frac{ W_t'(\lambda_t)^{-2}}{|z-\lambda_t|^2}\Big(1-\frac{ W_t'(\lambda_t)^{-1} P^{U_t}_{\lambda_t}(z) }{P^{\HH}_{\sigma_{u(t)}}\circ  W_t(z)}\Big).
\end{align*}
Combining the above two formulas and using some tedious but straightforward computation involving power series expansions, we get $$ W_t'(\lambda_t)^2 \mu^{\bub}_{{\sigma_{u(t)}}}({\cal L}({\HH\sem V_{u(t)}}))-\mu^{\bub}_{{\lambda_t}}({\cal L}({\HH\sem U_t}))=\frac 16 S(W_t)(\lambda_t).$$
This together with (\ref{loop-bubbleU-chordal},\ref{loop-bubbleV-chordal}) completes the proof of (\ref{S-loop-chordal}).

Since $\gamma$ is continuous and tends to $\infty$, from (\ref{finite},\ref{S-loop-chordal}), we see that, on the event that $\gamma\cap (\HH\sem U)=\emptyset$, the improper integral $\int_0^\infty \frac 16 S(W_s)(\lambda_s)ds$ converges to $\mu^{\lloop}({\cal L}_{\HH}(\beta ,\HH\sem V))-\mu^{\lloop}({\cal L}_{\HH}(\gamma, \HH\sem U))$.

We claim that $\lim_{t\to \infty} W_t'(\lambda_t)=1$ on the event that $\gamma\cap \HH\sem U=\emptyset$. Since $\kappa\in(4,8)$, there is $t_0\in(0,\infty)$ such that $\HH\sem U\subset K_{t_0}$. Then for $t\ge t_0$, $U\sem K_t=\HH\sem K_t$, and so $U_t=\HH$. Similarly, $V_t=\HH$ for $t\ge t_0$. Thus, for $t\ge t_0$, $W_t:(\HH;\infty)\conf (\HH;\infty)$ and $W_t'(\infty)=1$,
which implies that  $W_t'(\lambda_t)=1$. So the claim is proved.

From the above we see that $M_\infty:=\lim_{t\to \infty} M_t=e^{\cc \mu^{\lloop}({\cal L}_{\HH}(\gamma, \HH\sem U))}/e^{\cc \mu^{\lloop}({\cal L}_{\HH}(W(\gamma) ,\HH\sem V))}$ on the event that $\gamma\cap (\HH\sem U)=\emptyset$. Thus, $M_t$, $0\le t<\infty$, is bounded on this event.

For $n\in\N$,  let $T_n$ be the first time that $\gamma$ hits $\HH\sem U$ or $M_t\ge n$, whichever happens first. Then $T_n$ is a stopping time, and $M_t$ up to $T_n$ is bounded by $n$. Thus, $\EE[M_{T_n}]=M_0=W'(0)^{\frac{6-\kappa}{2\kappa}}$. Weighting the underlying probability measure by $M_{T_n}/M_0$, we get a new probability measure. By Girsanov Theorem and (\ref{dMt-chordal}), we find that
$$ \ha B _t:=B_t-\frac 1{\sqrt\kappa} \int_0^t  \frac{6-\kappa}{2} \frac{  W_s''(\lambda_s)}{ W_s'(\lambda_s)}ds,\quad 0\le t< T_n,$$
is a Brownian motion under the new probability measure. From (\ref{dYtau-chordal}), we get
$$d\sigma_{u(t)}=    W_t'(\lambda_t)\sqrt\kappa d\ha B_t,\quad 0\le t< T_n.$$
From (\ref{u'-chordal}) we see that, under the new probability measure, $\sigma_s/\sqrt\kappa$, $0\le s< u(T_n)$, is a Brownian motion, and so $\beta_s$, $0\le s\le u(T_n)$, is a chordal SLE$_\kappa$ curve in $\HH$ from $0$ to $\infty$, stopped at $u(T_n)$.
let $E_n$ denote the event that $\gamma\cap (\HH\sem U)=\emptyset$ and $M_t\le n$ for $0\le t<\infty$; and let $F_n$ denote the event that $W^{-1}(\beta)\in E_n$. Then on the event $E_n$, $T_n=u(T_n)=\infty$ , and $M_{T_n}/M_0=M_\infty/W'(0)^{\frac{6-\kappa}{2\kappa}}$. From the above argument, we get
$${\bf 1}_{F_n}\cdot \mu^\#_{\HH;0\to\infty}=W(W'(0)^{-\frac{6-\kappa}{2\kappa}}e^{\cc \mu^{\lloop}({\cal L}_{\HH}(\cdot, \HH\sem U ))}/e^{\cc \mu^{\lloop}({\cal L}_{\HH}(W(\cdot),\HH\sem V))} {\bf 1}_{E_n}\cdot \mu^\#_{\HH;0\to\infty} ).$$
Since $\mu^\#_{\HH;0\to\infty}$-a.s.\ $\bigcup E_n=\{\cdot\cap \HH\sem U=\emptyset\}$ and $\bigcup F_n=\{\cdot\cap \HH\sem V=\emptyset\}$, the above formula holds with $E_n$ and $F_n$ replaced by $\{\cdot\cap \HH\sem U=\emptyset\}$ and $\{\cdot\cap \HH\sem V=\emptyset\}$, respectively. The proposition now follows from this formula since we assumed that $W'(\infty)=1$.
\end{proof}

\begin{Remark}
The above proof also works for $\kappa\in(0,4]$ except that  $\lim_{t\to \infty} W_t'(\lambda_t)=1$ on the event $\gamma\cap (\HH\sem U)=\emptyset$ requires a little bit more work to prove.
\end{Remark}

\begin{Lemma}
  Let $K$ and $L$ be two non-degenerate interior hulls. Let $U,V\subset \ha\C$  be open neighborhoods of $K$ and $L$, respectively. Suppose $W:(U;K)\conf (V;L)$. Let $a$ and $b$ be distinct prime ends of $\ha\C\sem K$. Then $W(a)$ and $W(b)$ are distinct prime ends of $\ha\C\sem L$.
Let $g_K:\ha\C\sem K\conf \D^*$ and $g_L:\ha\C\sem L\conf \D^*$.
 Suppose $g_K(a)=e^{i\lambda}$, $g_K(b)=e^{iq}$, $g_L(W(a))=e^{i\sigma}$, and $g_L(W(b))=e^{ip}$ for some $\lambda,q,\sigma,p\in\R$. Let $W_K=g_L\circ W\circ g_K^{-1}$. Extend $W_K$ conformally across $\TT$.
  Then we have
   $$\mu^{\ha C\sem L}_{V\sem L;W(a)\to W(b)}= \Big|\frac{\sin_2(\sigma-p)}{\sin_2(\lambda-q)}\Big|^{\frac 6\kappa -1}\cdot  |W_K'(e^{i\sigma})W_K'(e^{iq}) |^{-\frac{6-\kappa}{2\kappa}} \cdot W(\mu^{\ha\C\sem K}_{U\sem K;a\to b}).$$ \label{lem-mult}
\end{Lemma}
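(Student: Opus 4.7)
The idea is to reduce the statement to Proposition \ref{prop-mult} by conformally transporting the problem to the upper half-plane setting, where that proposition applies directly. The key observation is that both $\mu^\#_{D;a\to b}$ and the Brownian loop measure on a simply connected domain are conformally invariant, so the combined measure $\mu^D_{U;a\to b}$ defined by (A.1) transforms covariantly under conformal equivalences. Applying the uniformizing map $g_K$ sends $\mu^{\hat{\C}\setminus K}_{U\setminus K;a\to b}$ to $\mu^{\D^*}_{g_K(U\setminus K);e^{i\lambda}\to e^{iq}}$, and similarly on the $L$-side; a subsequent Möbius map $\phi_K:\D^*\conf \HH$ sending $(e^{i\lambda},e^{iq})$ to $(0,\infty)$ lands us in the $\HH$-setting with measure $\mu^{\HH}_{\phi_K(g_K(U\setminus K));0\to\infty}$, and analogously with $\phi_L$ on the $L$-side.

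The composition $W_{\HH}:=\phi_L\circ W_K\circ \phi_K^{-1}$ is conformal on a neighborhood of $\R\cup\{\infty\}$ in $\HH$, maps $\R\cup\{\infty\}$ into itself, and fixes $0$ and $\infty$, so Proposition \ref{prop-mult} applies with $x=0$, yielding
\BGEN
\mu^{\HH}_{\phi_L(g_L(V\setminus L));0\to\infty}
= |W_{\HH}'(0)\,W_{\HH}'(\infty)|^{-(6-\kappa)/(2\kappa)}\; W_{\HH}\bigl(\mu^{\HH}_{\phi_K(g_K(U\setminus K));0\to\infty}\bigr).
\EDEN
Undoing the transports by $(\phi_L)^{-1}$ then $(g_L)^{-1}$ on the left (using conformal invariance) and noting that $g_L^{-1}\circ W_K\circ g_K=W$ on $U\setminus K$, which is the support of the right-hand measure, one sees that the pushforward by $W_{\HH}$ unwinds to the pushforward by $W$. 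This produces the lemma up to identifying the scalar Jacobian.

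The main work is then to express $|W_{\HH}'(0)\,W_{\HH}'(\infty)|$ in intrinsic form. By the chain rule, $|W_{\HH}'(0)| = |W_K'(e^{i\lambda})|\cdot|\phi_L'(e^{i\sigma})|/|\phi_K'(e^{i\lambda})|$; the analogous computation at $\infty$ uses the explicit forms $\phi_K(z)=C_K(z-e^{i\lambda})/(z-e^{iq})$ and $\phi_L(z)=C_L(z-e^{i\sigma})/(z-e^{ip})$, which give $|\phi_K'(e^{i\lambda})|=|C_K|/|e^{i\lambda}-e^{iq}|$ and leading term $C_K(e^{iq}-e^{i\lambda})/w$ for $\phi_K^{-1}(w)-e^{iq}$ as $w\to\infty$, and likewise for $\phi_L$. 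After these substitutions the Möbius constants $|C_K|,|C_L|$ cancel, leaving
\BGEN
|W_{\HH}'(0)\,W_{\HH}'(\infty)| = |W_K'(e^{i\lambda})\,W_K'(e^{iq})|\,\frac{\sin_2^2(\lambda-q)}{\sin_2^2(\sigma-p)},
\EDEN
via $|e^{i\theta}-e^{i\theta'}|=2|\sin_2(\theta-\theta')|$. Raising to the exponent $-(6-\kappa)/(2\kappa)$ and using $(6-\kappa)/\kappa=6/\kappa-1$ then yields the Jacobian factor stated in the lemma. The main technical subtlety is the careful handling of the $\infty$-derivative $W_{\HH}'(\infty)=(J\circ W_{\HH}\circ J)'(0)$ together with the cancellation of Möbius normalization constants; once the explicit forms of $\phi_K,\phi_L$ are in place this bookkeeping is elementary.
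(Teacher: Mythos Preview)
Your proof is correct and follows essentially the same strategy as the paper: transport both sides to $\HH$ by M\"obius maps taking $\D^*$ to $\HH$, apply Proposition \ref{prop-mult}, and then compute the resulting Jacobian factor via the chain rule. The only cosmetic difference is that the paper uses the explicit maps $\phi(z)=i\frac{z+e^{iq}}{z-e^{iq}}$ and $\psi(z)=i\frac{z+e^{ip}}{z-e^{ip}}$ (which send $e^{iq},e^{ip}$ to $\infty$ but $e^{i\lambda},e^{i\sigma}$ to $\cot_2(\lambda-q),\cot_2(\sigma-p)$ rather than to $0$), whereas you normalize so that both marked points go to $0$ and $\infty$; the computations are otherwise identical and your observation that the undetermined constants $C_K,C_L$ cancel is exactly the same mechanism as the paper's cancellation of the factors $1/2$ coming from $|(J\circ\phi)'(e^{iq})|$ and $|(J\circ\psi)'(e^{ip})|$. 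Note also that your chain-rule computation yields $|W_K'(e^{i\lambda})W_K'(e^{iq})|$, which agrees with how the lemma is applied in the proof of Theorem \ref{Thm-subdomain}; the appearance of $e^{i\sigma}$ in the displayed formula of the lemma is a typographical slip.
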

\begin{proof}
   Let $\phi(z)=i\frac{z+e^{iq}}{z-e^{iq}}$ and $\psi(z)=i\frac{z+e^{ip}}{z-e^{ip}}$.
   Then $\phi:(\D^*;e^{i\lambda},e^{iq})\conf(\HH;\cot_2(\lambda-q),\infty)$ and $\psi:(\D^*;e^{i\sigma},e^{ip})\conf(\HH;\cot_2(\sigma-p),\infty)$.
    Let $U_K=g_K(U\sem K)$ and $V_L=g_L(V\sem L)$. Then $U_K$ and $V_L$ are open neighborhoods of $\TT$ in $\D^*$,  $W_K:U_K\conf V_L$, and can be extended conformally across $\TT$. The extended $W_K$ maps $\TT$ onto $\TT$, and maps $e^{i\lambda}$ and $e^{iq}$ to $e^{i\sigma}$ and $e^{ip}$, respectively.
 Let $\ha U_K= \phi(U_K)$, $\ha V_L=\psi(V_L)$, and $\ha W_K=\psi\circ W_K\circ \phi^{-1}$. Then $\ha U_K$ and $\ha V_L$ are open neighborhoods of $\R\cup\{\infty\}$ in $\HH$,
 and $\ha W_K:(\ha U_K;\R,\cot_2(\lambda-q),\infty)\conf (\ha V_K;\R,\cot_2(\sigma-p),\infty)$.
 From Proposition \ref{prop-mult}, we have
   \begin{align*}
       \mu^{\HH}_{\ha V_L;\cot_2(\sigma-p)\to \infty}
     = |\ha W_K'(\cot_2(\lambda-q))\ha W'(\infty)|^{-\frac{6-\kappa}{2\kappa}} \ha W_K(\mu^{\HH}_{\ha U_K;\cot_2(\lambda-q)\to \infty}).
   \end{align*}
 We have $\phi\circ g_K:(\ha\C\sem K,U\sem K;a,b)\conf (\HH,\ha U_K;\cot_2(\lambda-q),\infty)$ and $\psi\circ g_L:(\ha\C\sem L,V\sem L;W(a),W(b))\conf (\HH,\ha V_L;\cot_2(\sigma-p),\infty)$.
   From the conformal invariance of chordal SLE and Brownian loop measure, we have
   \begin{align*}
     \phi\circ g_K(\mu^{\ha\C\sem K}_{U\sem K;a\to b})= \mu^{\HH}_{\ha U_K;\cot_2(\lambda-q)\to \infty},\quad
      \psi\circ g_L(\mu^{\ha\C\sem L}_{V\sem L;W(a)\to W(b)})     = \mu^{\HH}_{\ha V_L;\cot_2(\sigma-p)\to \infty}.
   \end{align*}
Combining the above displayed formulas and the fact that $\ha W_K=\psi\circ g_L\circ W\circ g_K^{-1}\circ \phi^{-1}$, we see that it suffices to prove that
$$ \Big|\frac{\sin_2(\sigma-p)}{\sin_2(\lambda-q)}\Big|^{-2}\cdot  |W_K'(e^{i\sigma})W_K'(e^{iq}) | = |\ha W_K'(\cot_2(\lambda-q))\ha W_K'(\infty)|  .$$
To see this, one may check that $|\phi'(e^{i\lambda})|=|\sin_2(\lambda-q)|^{-2}/2$, $|\psi'(e^{i\sigma})|=|\sin_2(\sigma-p)|^{-2}/2$; and with $J(z):=-1/z$, $|(J\circ \phi)'(e^{iq})|=|(J\circ \psi)'(e^{ip})|=1/2$.
\end{proof}

\begin{Lemma}
	Let $K$ and $L$ be two $\HH$-hulls. Let $U$ and $V$  be open neighborhoods of $ \R\cup\{\infty\}$ in $\HH$ such that $K\subset U$ and $L\subset V$. Suppose $W:(U;\R,\infty,K)\conf (V;\R,\infty,L)$.  Let $a$ and $b$ be distinct prime ends of $\HH\sem K$ that lie on $\pa K$. Then $W(a)$ and $W(b)$ are distinct prime ends of $\HH\sem L$ that lie on $\pa L$.
	Let $g_K:\HH\sem K\conf \HH$ and $g_L:\HH\sem L\conf \HH$. Suppose $g_K(a)={\lambda}$, $g_K(b)={q}$, $g_L(W(a))={\sigma}$, and $g_L(W(b))={p}$ for some $\lambda,q,\sigma,p\in\R$. Let $W_K=g_L\circ W\circ g_K^{-1}$. Extend $W_K$ conformally across $\R$.
	Then we have
	$$\mu^{\HH\sem L}_{V\sem L;W(a)\to W(b)}= \Big|\frac{ \sigma-p}{ \lambda-q }\Big|^{\frac 6 \kappa-1}\cdot  |W_K'({\sigma})W_K'({q}) |^{-\frac{6-\kappa}{2\kappa}} \cdot W(\mu^{\HH\sem K}_{U\sem K;a\to b}).$$ \label{lem-mult'}
\end{Lemma}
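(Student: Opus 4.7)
The plan is to adapt the proof of Lemma~\ref{lem-mult} to the chordal (half-plane) setting, using M\"obius automorphisms of $\HH$ in place of the M\"obius maps $\phi,\psi:\D^*\to\HH$ employed there. First I would transfer both sides of the claimed identity to $\HH$ via the normalized chordal Loewner maps $g_K:(\HH\sem K;\infty,a,b)\conf(\HH;\infty,\lambda,q)$ and $g_L:(\HH\sem L;\infty,W(a),W(b))\conf(\HH;\infty,\sigma,p)$. By the conformal invariance of chordal SLE$_\kappa$ and of the Brownian loop measure (using that $\HH\sem K$ and $\HH\sem L$ are simply connected, so the Brownian loop factor in $\mu^{\HH\sem K}_{U\sem K;a\to b}$ transfers cleanly),
\[g_K(\mu^{\HH\sem K}_{U\sem K;a\to b})=\mu^{\HH}_{g_K(U\sem K);\lambda\to q},\qquad g_L(\mu^{\HH\sem L}_{V\sem L;W(a)\to W(b)})=\mu^{\HH}_{g_L(V\sem L);\sigma\to p},\]
so the entire conformal distortion is carried by $W_K=g_L\circ W\circ g_K^{-1}$, a conformal map between open neighborhoods of $\R\cup\{\infty\}$ in $\HH$ fixing $\R$ and $\infty$ and satisfying $W_K(\lambda)=\sigma$ and $W_K(q)=p$.

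Next I would introduce the M\"obius automorphisms of $\HH$ defined by $\phi(z)=-1/(z-q)$ and $\psi(z)=-1/(z-p)$, which send the targets $q,p$ respectively to $\infty$. Set $\widetilde W=\psi\circ W_K\circ\phi^{-1}$. A direct check shows that $\widetilde W$ is a conformal map between open neighborhoods of $\R\cup\{\infty\}$ in $\HH$ with $\widetilde W(\R)=\R$ and $\widetilde W(\infty)=\psi(W_K(q))=\psi(p)=\infty$. Thus Proposition~\ref{prop-mult} applies at $x=\phi(\lambda)$ and, combined with the conformal invariance of $\mu^{\HH}$ under $\phi\circ g_K$ and $\psi\circ g_L$ and the key identity $\widetilde W\circ\phi\circ g_K=\psi\circ g_L\circ W$, yields
\[\mu^{\HH\sem L}_{V\sem L;W(a)\to W(b)}=|\widetilde W'(\phi(\lambda))\,\widetilde W'(\infty)|^{-\frac{6-\kappa}{2\kappa}}\,W(\mu^{\HH\sem K}_{U\sem K;a\to b}).\]

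It then remains to rewrite the distortion factor in the form stated. The chain rule gives $\widetilde W'(\phi(\lambda))=\psi'(\sigma)\,W_K'(\lambda)/\phi'(\lambda)$, and a short computation using $J\circ\psi(z)=z-p$ and $\phi^{-1}\circ J(u)=q+u$ yields $(J\circ\widetilde W\circ J)(u)=W_K(q+u)-p$, so $\widetilde W'(\infty)=W_K'(q)$. Together with $|\phi'(\lambda)|=(\lambda-q)^{-2}$ and $|\psi'(\sigma)|=(\sigma-p)^{-2}$ this gives
\[|\widetilde W'(\phi(\lambda))\,\widetilde W'(\infty)|=\Big|\frac{\lambda-q}{\sigma-p}\Big|^{2}\,|W_K'(\lambda)\,W_K'(q)|,\]
and raising to the power $-(6-\kappa)/(2\kappa)$ produces the factor claimed in the lemma. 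The whole argument is structural and parallel to Lemma~\ref{lem-mult}; the main (but routine) obstacle is the chain-rule identification $\widetilde W'(\infty)=W_K'(q)$, which is what makes the specific choice of $\phi,\psi$ sending the target points to $\infty$ collapse the derivative-at-$\infty$ factor of Proposition~\ref{prop-mult} into a boundary-point derivative of $W_K$.
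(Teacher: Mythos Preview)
Your proof is correct and follows essentially the same route as the paper, which also reduces to Proposition~\ref{prop-mult} via M\"obius automorphisms of $\HH$ sending the target points $q,p$ to $\infty$; the paper chooses $\phi(z)=-\tfrac{z+q}{z-q}$ and $\psi(z)=-\tfrac{z+p}{z-p}$ instead of your $-1/(z-q)$ and $-1/(z-p)$, but the structure and the derivative bookkeeping are identical. One small remark: your chain-rule computation yields $|W_K'(\lambda)\,W_K'(q)|$, whereas the displayed statement of the lemma has $|W_K'(\sigma)\,W_K'(q)|$; this is a typo in the statement (the same slip appears in Lemma~\ref{lem-mult}, and the application in the proof of Theorem~\ref{Thm-subdomain} uses $W'(e^{i\lambda})$), so your version is the correct one.
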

\begin{proof}
	The proof is similar to that of Lemma \ref{lem-mult} except that here we use the functions $\phi(z)=-\frac{z+q}{z-q}$ and $\psi(z)=-\frac{z+p}{z-p}$, which map $\HH$ conformally onto $\HH$.
\end{proof}

\end{document}